\definecolor{ForestGreen}{rgb}{0.1,0.6,0.05}
\definecolor{EgyptBlue}{rgb}{0.063,0.1,0.6}
\definecolor{RipeOlive}{HTML}{556B2F}
\newtheorem{theorem}{Theorem}
\newtheorem{proposition}[theorem]{Proposition}
\newtheorem{lemma}[theorem]{Lemma}
\theoremstyle{definition}
\newtheorem{remark}[theorem]{Remark}
\numberwithin{equation}{section}
\numberwithin{theorem}{section}
\numberwithin{equation}{section}
\numberwithin{theorem}{section}
\newenvironment{proof*}[1]{\begin{trivlist}\item[\hskip%
		\labelsep{{\bf Proof of \/{\rm\bf #1.}}\quad}]\rm}%
	{\hfill\qed\rm\end{trivlist}}
\newcommand{\W}{W_0^{1,p}}
\newcommand{\intO}{\int_\Omega}
\newcommand{\E}{E_{\alpha,\beta}}
\newcommand{\J}{J_{\alpha,\beta}}
\newcommand{\N}{\mathcal{N}_{\alpha,\beta}}
\newcommand{\Ep}{\widetilde{E}_{\alpha,\beta}}
\DeclareRobustCommand\alphaM{\alpha}
\DeclareRobustCommand\betaM{\beta}
\title{
	\vspace*{-1cm}
	Multiplicity of positive solutions for $(p,q)$-Laplace equations with two parameters} 
\author{ 
	\normalsize Vladimir Bobkov\\ 
	{\small  Department of Mathematics and NTIS, Faculty of Applied Sciences, University of West Bohemia}\\ 
	{\small Univerzitn\'i 8, 301 00 Plze\v{n}, Czech Republic}\\
	{\small  Institute of Mathematics, Ufa Federal Research Centre, RAS}\\ 
	{\small Chernyshevsky str.\ 112, 450008 Ufa, Russia}\\
	{\small e-mail: bobkov@kma.zcu.cz}\\[0.5em] 	
	\normalsize Mieko Tanaka\\
	{\small Department of  Mathematics, 
		Tokyo University of Science}\\
	{\small Kagurazaka 1-3, Shinjyuku-ku, Tokyo 162-8601, Japan}\\
	{\small e-mail: miekotanaka@rs.tus.ac.jp} 
}
\date{}
\begin{document}
\maketitle 
	
\begin{abstract} 
	
	We study the zero Dirichlet problem for the equation $-\Delta_p u -\Delta_q u = \alpha |u|^{p-2}u+\beta |u|^{q-2}u$ in a bounded domain $\Omega \subset \mathbb{R}^N$, with $1<q<p$.
	We investigate the relation between two critical curves on the $(\alpha,\beta)$-plane corresponding to the threshold of existence of special classes of positive solutions.
	In particular, in certain neighbourhoods of the point 
	$(\alpha,\beta) = \left(\|\nabla \varphi_p\|_p^p/\|\varphi_p\|_p^p,\,\|\nabla \varphi_p\|_q^q/\|\varphi_p\|_q^q\right)$, 
	where $\varphi_p$ is the first eigenfunction of the $p$-Laplacian, 
	we show the existence of two and, which is rather unexpected, three distinct positive solutions, depending on a relation between the exponents $p$ and $q$. 
			
	\par
	\smallskip
	\noindent {\bf  Keywords}:\ $(p,q)$-Laplacian, positive solutions, fibered functional, mountain pass theorem, local minimum, S-shaped bifurcation, three solutions.
	
	\par
	\smallskip
	\noindent {\bf  MSC2010}: \ 
	35P30,  
	35B09,  
	35B32,  
	35B34,  
	35J62,  
	35J20   
\end{abstract} 

\section{Introduction and main results} 
We consider the boundary value problem
\begin{equation}\label{eq:D}
\tag{$D_{\alphaM,\betaM}$}
\left\{
\begin{aligned}
-\Delta_p u -\Delta_q u &= \alpha |u|^{p-2}u+\beta |u|^{q-2}u 
&&\text{in}\ \Omega, \\
u&=0 &&\text{on}\ \partial \Omega,
\end{aligned}
\right.
\end{equation}
where the operator $\Delta_r$, formally defined as $\Delta_r u = \text{div}\left(|\nabla u|^{r-2} \nabla u \right)$ for $r = p,q > 1$, is the $r$-Laplacian, $\alpha, \beta \in \mathbb{R}$ are parameters, and $\Omega \subset \mathbb{R}^N$ is a bounded domain, $N \geq 1$. In the case $N \geq 2$, we require the boundary $\partial \Omega$ of $\Omega$ to be $C^2$-smooth.
Throughout the text, we always assume $q < p$, which involves no loss of generality.

The differential operator in the problem \eqref{eq:D} is usually called the $(p,q)$-Laplacian, and thereby \eqref{eq:D} can be formally understood as the corresponding eigenvalue problem. 
Although the presence of two spectral parameters ($\alpha$ and $\beta$) is not typical in nonlinear spectral theories (cf.\ \cite{APV,FNSS}), such choice appears to be more convenient for our particular problem since it provides a separate control of the influence of the $(p-1)$- and $(q-1)$-homogeneous parts. 
Considered independently, these parts correspond to the eigenvalue problems for the $p$- and $q$-Laplacians, and it is thus natural to anticipate a strong dependence of the structure of the solution set of \eqref{eq:D} on the spectrum of both $p$- and $q$-Laplacians. 
Indeed, the problem \eqref{eq:D} has been investigated in a few works, where certain nontrivial dependences of this kind were obtained, see, e.g., \cite{cindeg,CS,KTT,MP,MT,T-2014}, the works \cite{BobkovTanaka2015,BobkovTanaka2016,BobkovTanaka2017,BobkovTakanaPicone} of the present authors, and a survey \cite{marcomasconi}. 
In the present article, we continue our investigation of the problem \eqref{eq:D} by establishing several nontrivial multiplicity results, mainly in special neighbourhoods of the point
\begin{equation*}\label{eq:ab}
(\alpha,\beta) = \left(\frac{\|\nabla \varphi_p\|_p^p}{\|\varphi_p\|_p^p},\frac{\|\nabla \varphi_p\|_q^q}{\|\varphi_p\|_q^q}\right),
\end{equation*}
where $\varphi_p$ is the first eigenfunction of the $p$-Laplacian.
In particular, we discover the formation of an $S$-shaped bifurcation diagram when $p>2q$, see Figure \ref{fig:1}.

Prior to the rigorous description of main results, let us mention that various problems with the $(p,q)$-Laplacian, whose motivation arises from both mathematical and physical premises, are actively studied in the contemporary literature. 
Among physical origins of the $(p,q)$-Laplacian, one can think of it as a formal two-term Taylor approximation of more complex differential operators, see, e.g.,
\cite{zakharov} for the Zakharov equation describing in a simplified way long-wave oscillations of a plasma, \cite{benci} for a higher-dimensional generalization of the sine-Gordon equation which possesses soliton-type solutions, and \cite{BCF} for an approximation of the electrostatic Born-Infeld equation with a superposition of point charges. 
Let us also point out a model in the theory of crystal growth containing the one-dimensional $(1,2)$-Laplacian which was studied in \cite{mucha}.
Among mathematical origins, the $(p,q)$-Laplacian occurs, e.g., in the procedure of elliptic regularization which consists in the inclusion of the regularizing term $\varepsilon^2 \Delta$, $\varepsilon \in \mathbb{R}$, in a nonlinear equation, with a view to obtain better properties of the augmented equation, see, for instance, \cite{agudelo,marcel}. 
An investigation of variational functionals with nonstandard $(p,q)$-growth conditions, mainly in connection with 
the Lavrentiev gap phenomenon, has been performed, e.g., in \cite{mignione,zhikov}.
Finally, we refer the interested reader to a nonexhaustive list of works \cite{CMP,CI,cindeg,MP,PVV,PSqas} for a development of the existence theory for various problems with the $(p,q)$-Laplacian.

\subsection{Several notations}
Hereinafter, we denote the Sobolev space $W_0^{1,r}(\Omega)$ shortly by $W_0^{1,r}$, where $r>1$. The standard norm of the Lebesgue space $L^r(\Omega)$ will be denoted by $\|\cdot\|_r$.
A function $u\in\W$ is called a (weak) solution of \eqref{eq:D} if the following equality is satisfied for any test function $\varphi\in \W$:
\begin{equation}\label{eq:D:weak}
\intO |\nabla u|^{p-2}\nabla u\nabla\varphi \,dx 
+ \intO |\nabla u|^{q-2}\nabla u\nabla\varphi \,dx 
= \alpha \intO |u|^{p-2}u \varphi \, dx + \beta \intO |u|^{q-2}u \varphi\,dx.
\end{equation}
The energy functional $\E : \W \to \mathbb{R}$ associated with \eqref{eq:D} is given by
\begin{equation*}\label{def:E} 
\E(u) = \frac{1}{p}\, H_\alpha (u)+\frac{1}{q}\,G_\beta(u), 
\end{equation*}
where
\begin{equation*}\label{def:HG}
H_\alpha (u) :=\|\nabla u\|_p^p -\alpha\|u\|_p^p 
\quad {\rm and}\quad 
G_\beta(u) :=\|\nabla u\|_q^q -\beta\|u\|_q^q.
\end{equation*}
Since $p>q>1$, we have $\E \in C^1(\W,\mathbb{R})$, and hence weak solutions of \eqref{eq:D} are in one-to-one correspondence with critical points of $\E$.

\begin{remark}\label{rem:positive}
	Using the Moser iteration process (see, e.g.,  
	\cite[Appendix A]{MMT}), one can show that any solution $u$ of \eqref{eq:D} belongs to $L^\infty(\Omega)$. 
	Then, the regularity up to the boundary given by \cite[Theorem~1]{Lieberman} and \cite[p.~320]{L} ensures that $u\in C^{1,\gamma}_0(\overline{\Omega})$ for some $\gamma\in(0,1)$. 
	Moreover, if $u$ is a nonzero nonnegative solution, then the strong maximum principle and the boundary point lemma (see, e.g., \cite[Theorems 5.4.1 and 5.5.1]{puser}) guarantee that $u$ is positive and belongs to
	$$
	{\rm int}\,C_0^1(\overline{\Omega})_+ := 
	\left\{
	u \in C^1_0(\overline{\Omega}):~
	u(x)>0 \text{ for all } x \in \Omega,~
	\frac{\partial u}{\partial\nu}(x) < 0 \text{ for all } x \in \partial\Omega 
	\right\},
	$$
	the interior of the positive cone of $C^1_0(\overline{\Omega})$.
	Here $\nu$ is the exterior unit normal vector to $\partial \Omega$. 
\end{remark}

Finally, we denote by $\lambda_1(r)$ the first eigenvalue of the $r$-Laplacian, i.e.,
$$
\lambda_1(r) 
= 
\inf\left\{\frac{\|\nabla u\|_r^r}{\|u\|_r^r}:~ u \in W_0^{1,r} \setminus \{0\}
\right\},
$$
and by $\varphi_r$ the corresponding first eigenfunction. Notice that $\varphi_r$ has a constant sign in $\Omega$, and we will assume, without loss of generality, that $\varphi_r > 0$ in $\Omega$ and $\|\nabla \varphi_r\|_r=1$. 
Moreover, for such $\varphi_r$ we have $\varphi_r \in {\rm int}\,C_0^1(\overline{\Omega})_+$.
Furthermore, since $p>q$, $\varphi_p$ cannot simultaneously be an eigenfunction of the $q$-Laplacian, see \cite[Proposition 13]{BobkovTanaka2017}.

\subsection{Overview of known results}\label{sec:knownfacts}

Let us divide the $(\alpha,\beta)$-plane into four open quadrants by the lines 
$\{\lambda_1(p)\} \times \mathbb{R}$ and $\mathbb{R} \times \{\lambda_1(q)\}$ (see Figures \ref{fig:1}, \ref{fig:2}). 
We recall several known facts about the existence, nonexistence, and multiplicity of positive solutions of \eqref{eq:D} in these quadrants, as well as on their boundaries $\{\lambda_1(p)\} \times \mathbb{R}$ and $\mathbb{R} \times \{\lambda_1(q)\}$.

\begin{proposition}[\protect{\cite[Proposition 1]{BobkovTanaka2015} and \cite[Proposition 13]{BobkovTanaka2017}}]\label{prop:nonexistence}
	Let $\alpha \leq \lambda_1(p)$ and $\beta \leq \lambda_1(q)$. 
	Then \eqref{eq:D} has no nonzero solution.
\end{proposition}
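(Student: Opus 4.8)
The plan is to test the weak formulation \eqref{eq:D:weak} against the solution itself and then exploit the variational characterization of the first eigenvalues. Suppose $u \in \W$ is a solution of \eqref{eq:D}. Since $\Omega$ is bounded and $q<p$, we have $\W \hookrightarrow W_0^{1,q}$, so the $q$-Rayleigh quotient of $u$ is well defined. Choosing $\varphi = u$ in \eqref{eq:D:weak} gives
\[
\|\nabla u\|_p^p + \|\nabla u\|_q^q = \alpha\|u\|_p^p + \beta\|u\|_q^q,
\]
which, in the notation introduced for the energy functional, reads $H_\alpha(u) + G_\beta(u) = 0$.

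Next I would estimate each term from below separately. By the definition of $\lambda_1(p)$ and $\lambda_1(q)$, every $u$ satisfies $\|\nabla u\|_p^p \geq \lambda_1(p)\|u\|_p^p$ and $\|\nabla u\|_q^q \geq \lambda_1(q)\|u\|_q^q$; combining these with the hypotheses $\alpha \leq \lambda_1(p)$ and $\beta \leq \lambda_1(q)$ yields $H_\alpha(u) \geq 0$ and $G_\beta(u) \geq 0$. Together with the identity $H_\alpha(u) + G_\beta(u) = 0$ obtained above, this forces
\[
H_\alpha(u) = 0 \quad\text{and}\quad G_\beta(u) = 0.
\]
If at least one of the parameter inequalities is strict, the argument terminates at once: for instance, if $\alpha < \lambda_1(p)$ and $u \neq 0$, then $\|\nabla u\|_p^p \geq \lambda_1(p)\|u\|_p^p > \alpha\|u\|_p^p$, so $H_\alpha(u) > 0$, contradicting $H_\alpha(u)=0$; the case $\beta < \lambda_1(q)$ is symmetric.

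The only situation left is the corner $\alpha = \lambda_1(p)$ and $\beta = \lambda_1(q)$, which I expect to be the main obstacle, since it is the single point where the energy estimate degenerates to equality and genuine spectral information must be invoked. Here $H_\alpha(u) = 0$ means that a nonzero $u$ would realize the infimum defining $\lambda_1(p)$, hence be a first eigenfunction of the $p$-Laplacian; by simplicity of $\lambda_1(p)$ this makes $u$ a scalar multiple of $\varphi_p$. Symmetrically, $G_\beta(u)=0$ makes $u$ a scalar multiple of $\varphi_q$. Were $u \neq 0$, comparing the two would force $\varphi_p$ to be proportional to $\varphi_q$, and thus $\varphi_p$ to be an eigenfunction of the $q$-Laplacian, contradicting the fact recalled above (see \cite[Proposition 13]{BobkovTanaka2017}) that $\varphi_p$ cannot be a $q$-eigenfunction when $p>q$. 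Therefore $u = 0$, which completes the proof. Beyond the corner case, the whole argument is a direct one-line energy computation, so the content is concentrated entirely in the non-coincidence of $\varphi_p$ and $\varphi_q$.
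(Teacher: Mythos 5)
Your proof is correct and follows exactly the route indicated by the paper's citations: the identity $H_\alpha(u)+G_\beta(u)=0$ from testing with $u$, combined with the Rayleigh-quotient bounds, settles every case except the corner $(\alpha,\beta)=(\lambda_1(p),\lambda_1(q))$, which is then excluded by simplicity of the first eigenvalues and the fact (quoted from \cite[Proposition 13]{BobkovTanaka2017}) that $\varphi_p$ cannot be a first eigenfunction of the $q$-Laplacian when $p>q$. Nothing further is needed.
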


\begin{proposition}[\protect{\cite[Propositions 2 and 6]{BobkovTanaka2015} and \cite[Remark 1]{BobkovTanaka2017}; see also \cite[Lemma 2.2]{MP} for a related result}]\label{prop:uniq}
	Let $\alpha < \lambda_1(p)$ and $\beta > \lambda_1(q)$. 
	Then \eqref{eq:D} has at least one positive solution. 
	Moreover, any nonzero solution $u$ of \eqref{eq:D} satisfies $\E(u) < 0$. 
	Furthermore, if $\alpha \leq 0$, then the positive solution is unique.
\end{proposition}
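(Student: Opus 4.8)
The plan is to treat the three assertions separately: existence and the sign of the energy follow from the direct method, while uniqueness relies on a Picone/Díaz-Saá type comparison.

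\emph{Existence.} First I would show that $\E$ is coercive. Since $\alpha<\lambda_1(p)$, the variational characterisation of $\lambda_1(p)$ gives $H_\alpha(u)\ge c_\alpha\|\nabla u\|_p^p$ with $c_\alpha:=\min\{1,\,1-\alpha/\lambda_1(p)\}>0$ (splitting the cases $\alpha\le 0$ and $0<\alpha<\lambda_1(p)$). On the other hand, by the Poincar\'e inequality for the $q$-Laplacian and the continuous embedding $\W\hookrightarrow W_0^{1,q}$ one has $|G_\beta(u)|\le C\|\nabla u\|_p^q$, and since $p>q$ the $p$-homogeneous term dominates, so $\E(u)\to+\infty$ as $\|\nabla u\|_p\to\infty$. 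The gradient terms are convex and continuous, hence weakly lower semicontinuous, while $\|u\|_p^p$ and $\|u\|_q^q$ are weakly continuous via the compact embeddings $\W\hookrightarrow\hookrightarrow L^p,L^q$; thus the direct method yields a global minimiser $u_0$. To see $u_0\neq 0$, I would test with $t\varphi_q$: since $\beta>\lambda_1(q)$, one gets $G_\beta(t\varphi_q)=t^q(\lambda_1(q)-\beta)\|\varphi_q\|_q^q<0$, and as $q<p$ this negative $q$-homogeneous term dominates for small $t>0$, so $\inf\E\le\E(t\varphi_q)<0$ and $u_0\neq 0$. Replacing $u_0$ by $|u_0|$ does not change $\E$, so $u_0$ may be taken nonnegative, and Remark~\ref{rem:positive} promotes it to a positive solution in $\operatorname{int}C^1_0(\overline{\Omega})_+$.

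\emph{Sign of the energy.} For any nonzero solution $u$, testing \eqref{eq:D:weak} with $\varphi=u$ gives the Nehari-type identity $H_\alpha(u)+G_\beta(u)=0$. Substituting $G_\beta(u)=-H_\alpha(u)$ into the energy yields $\E(u)=(\tfrac1p-\tfrac1q)H_\alpha(u)$. Since $\alpha<\lambda_1(p)$ forces $H_\alpha(u)>0$ for $u\neq 0$, and $\tfrac1p-\tfrac1q<0$, we conclude $\E(u)<0$.

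\emph{Uniqueness for $\alpha\le 0$.} Here I would compare two positive solutions $u_1,u_2\in\operatorname{int}C^1_0(\overline{\Omega})_+$. Testing the equation for $u_i$ with $(u_i^q-u_j^q)/u_i^{q-1}$ and adding the two resulting identities, the right-hand side collapses: the $\beta$-contributions (matched to the exponent $q$) cancel exactly, leaving only $\alpha\intO (u_1^q-u_2^q)(u_1^{p-q}-u_2^{p-q})\,dx$. As $s\mapsto s^{p-q}$ is increasing, this integrand is nonnegative, so with $\alpha\le 0$ the whole right-hand side is $\le 0$. On the left-hand side, the $q$-Laplacian part is $\ge 0$ by the classical Díaz-Saá inequality, and the remaining mixed-exponent $p$-Laplacian part must also be shown to be $\ge 0$; comparing both sides then forces every term to vanish. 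When $\alpha<0$ the strict monotonicity of $s\mapsto s^{p-q}$ immediately gives $u_1=u_2$, while for $\alpha=0$ the equality case of the $q$-Laplacian Díaz-Saá inequality gives $u_1=cu_2$, and substituting back into the equation pins down $c=1$.

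The hard part will be the mixed-exponent Picone inequality, namely the pointwise estimate rendering the $p$-Laplacian contribution $\intO|\nabla u_1|^{p-2}\nabla u_1\cdot\nabla\bigl((u_1^q-u_2^q)/u_1^{q-1}\bigr)\,dx+(\text{symmetric term})$ nonnegative, together with a clean description of its equality case. This, and the admissibility of the test functions $(u_i^q-u_j^q)/u_i^{q-1}$ in $\W$, both rest on the Hopf-type boundary behaviour from Remark~\ref{rem:positive}, which keeps the quotient $u_j/u_i$ bounded up to $\partial\Omega$.
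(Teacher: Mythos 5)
Your proposal is correct and follows essentially the same route as the proofs in the cited sources \cite{BobkovTanaka2015,BobkovTanaka2017}: coercive global minimization of $\E$ with a test along $t\varphi_q$ for existence, the Nehari identity $H_\alpha(u)+G_\beta(u)=0$ combined with $H_\alpha(u)>0$ for the sign of the energy, and a Picone/D\'iaz--Sa\'a comparison with test functions $(u_i^q-u_j^q)/u_i^{q-1}$ for uniqueness when $\alpha\le 0$. The one ingredient you defer --- the pointwise estimate $|\nabla v|^{p-2}\nabla v\cdot\nabla\bigl(u^q/v^{q-1}\bigr)\le|\nabla v|^{p-q}|\nabla u|^q$ for $q\le p$, which makes the $p$-Laplacian block nonnegative via $\int_\Omega(|\nabla u_1|^{p-q}-|\nabla u_2|^{p-q})(|\nabla u_1|^q-|\nabla u_2|^q)\,dx\ge 0$ --- is precisely the generalized Picone inequality of \cite[Proposition 2.9]{BF} and \cite[Lemma 1]{ilyas} already invoked in Section \ref{sec:knownfacts}, so it and its equality case are available off the shelf.
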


\begin{proposition}[\protect{\cite[Propositions 2 and 6]{BobkovTanaka2015}}]
	Let $\alpha > \lambda_1(p)$ and $\beta < \lambda_1(q)$. 
	Then \eqref{eq:D} has at least one positive solution. Moreover, any nonzero solution $u$ of \eqref{eq:D} satisfies $\E(u) > 0$. 
\end{proposition}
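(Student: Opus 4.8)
The plan is to treat the two assertions separately: the sign of the energy follows at once from the Nehari identity, while the existence of a positive solution is obtained by minimising $\E$ over the Nehari manifold, the point being that in this regime $\E$ is unbounded below and the minimisation requires the homogeneity of the problem.

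For the \emph{energy sign}, let $u$ be any nonzero solution. Testing \eqref{eq:D:weak} with $\varphi=u$ gives the Nehari identity $H_\alpha(u)+G_\beta(u)=0$. Since $\beta<\lambda_1(q)$, the variational characterisation of $\lambda_1(q)$ yields $\|u\|_q^q\le\|\nabla u\|_q^q/\lambda_1(q)$, whence $G_\beta(u)\ge(1-\beta/\lambda_1(q))\|\nabla u\|_q^q>0$ for $u\neq0$. Therefore $H_\alpha(u)=-G_\beta(u)<0$, and
\[
\E(u)=\frac1p H_\alpha(u)+\frac1q G_\beta(u)=\Bigl(\frac1q-\frac1p\Bigr)G_\beta(u)>0,
\]
which is the second assertion. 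Note that the same computation shows $\E=\bigl(\tfrac1q-\tfrac1p\bigr)G_\beta$ on the whole Nehari manifold $\N:=\{u\in\W\setminus\{0\}:\langle\E'(u),u\rangle=0\}=\{u\neq0:H_\alpha(u)+G_\beta(u)=0\}$.

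For \emph{existence} I would set up the fibering analysis. For a fixed $w\neq0$ the map $j_w(t):=\E(tw)=\frac{t^p}{p}H_\alpha(w)+\frac{t^q}{q}G_\beta(w)$ satisfies $j_w'(t)=t^{q-1}\bigl(t^{p-q}H_\alpha(w)+G_\beta(w)\bigr)$; since $G_\beta(w)>0$ for $w\neq0$, the ray carries a (unique) positive critical point if and only if $H_\alpha(w)<0$, and that point $t_w=\bigl(G_\beta(w)/|H_\alpha(w)|\bigr)^{1/(p-q)}$ is a strict global maximum of $j_w$ lying on $\N$. As $\alpha>\lambda_1(p)$, directions with $H_\alpha(w)<0$ exist (e.g.\ $w=\varphi_p$), so $\N\neq\varnothing$, and the candidate ground state level is $c:=\inf_{\N}\E=\bigl(\tfrac1q-\tfrac1p\bigr)\inf_{\N}G_\beta$. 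I would then take a minimising sequence $u_n\in\N$, so that $G_\beta(u_n)$ is bounded and hence $\|\nabla u_n\|_q$ is bounded, and extract a weak limit.

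The main difficulty is that, unlike the regime $\alpha<\lambda_1(p)$, here $\E$ is \emph{not} coercive (the $p$-homogeneous part $\tfrac1pH_\alpha$ is the leading one and can be negative), so boundedness of the minimising sequence in $\W$ is not automatic and is the crux of the argument. I would resolve this by homogeneity and compactness: assuming $\|\nabla u_n\|_p\to\infty$ and setting $v_n:=u_n/\|\nabla u_n\|_p$, the bound on $\|\nabla u_n\|_q$ forces $\|\nabla v_n\|_q\to0$; the compact embeddings $\W\hookrightarrow\hookrightarrow L^p(\Omega)$ and $\W\hookrightarrow\hookrightarrow L^q(\Omega)$ then give $v_n\to v$ with $\|\nabla v\|_q=0$, i.e.\ $v=0$ and $\|v_n\|_p\to0$. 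Dividing the Nehari identity $\|\nabla u_n\|_p^p+\|\nabla u_n\|_q^q=\alpha\|u_n\|_p^p+\beta\|u_n\|_q^q$ by $\|\nabla u_n\|_p^p$ yields $1+o(1)=\alpha\|v_n\|_p^p+o(1)\to0$, a contradiction; hence $\{u_n\}$ is bounded. The very same rescaling argument (applied on the sphere $\{\|\nabla w\|_p=1,\ H_\alpha(w)\le0\}$, where $H_\alpha(w)\le0$ forces $\|w\|_p^p\ge1/\alpha$ and thus precludes $\|\nabla w\|_q\to0$) shows $\inf_{\N}G_\beta>0$, so that $c>0$. With boundedness in hand, $u_n\rightharpoonup u_0$ weakly and strongly in $L^p,L^q$; since $c>0$, the limit cannot be trivial (else $G_\beta(u_n)\to0$), so $u_0\neq0$, and by weak lower semicontinuity of $H_\alpha$ and $G_\beta$ together with the projection of $u_0$ onto $\N$ one concludes $u_0\in\N$ and $\E(u_0)=c$. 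Finally, $\N$ is a natural constraint because for $u\in\N$ one has $\frac{d}{dt}\langle\E'(tu),tu\rangle\big|_{t=1}=(p-q)H_\alpha(u)<0\neq0$, so the Lagrange multiplier vanishes and $u_0$ is a weak solution; replacing $u_0$ by $|u_0|$ (which leaves $H_\alpha,G_\beta$, and hence membership in $\N$ and the value $c$, unchanged) gives a nonnegative nonzero solution, which by Remark~\ref{rem:positive} lies in $\operatorname{int}C_0^1(\overline{\Omega})_+$ and is therefore positive.
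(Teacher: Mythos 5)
Your proposal is correct and follows essentially the same route as the cited source \cite[Propositions 2 and 6]{BobkovTanaka2015} (and as the fibering framework the paper recalls in Section \ref{sec:auxiliaryI}): the sign of the energy comes from the Nehari identity combined with $G_\beta(u)>0$ for all $u\in\W\setminus\{0\}$ forced by $\beta<\lambda_1(q)$, and existence comes from minimizing $\E$ over $\N$ using the fiber maximum point \eqref{tu} on directions with $H_\alpha<0<G_\beta$, with the normalization/compactness argument supplying both the boundedness of minimizing sequences and the positivity of the level. Two cosmetic points only: for $\beta<0$ the written inequality $G_\beta(u)\ge(1-\beta/\lambda_1(q))\|\nabla u\|_q^q$ reverses direction (although $G_\beta(u)\ge\|\nabla u\|_q^q>0$ then holds trivially, so the conclusion stands), and the minimizer you produce at the end is the projection $t_{\alpha,\beta}(u_0)u_0\in\N$ rather than $u_0$ itself.
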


In order to discuss the existence in the remaining quadrant $(\lambda_1(p),+\infty) \times (\lambda_1(q),+\infty)$, we introduce the threshold curve
$$
\beta_{ps}(\alpha) 
:= 
\sup
\left\{\beta \in \mathbb{R}:~ \eqref{eq:D} ~\text{has at least one positive solution}
\right\}
$$
for $\alpha \geq \lambda_1(p)$.
Define also the values
\begin{equation*}\label{def:values} 
\alpha_* = \frac{\|\nabla \varphi_q\|_p^p}{\|\varphi_q\|_p^p} 
\quad \text{and} \quad 
\beta_* = \frac{\|\nabla \varphi_p\|_q^q}{\|\varphi_p\|_q^q}. 
\end{equation*}
It was proved in \cite[Proposition 3]{BobkovTanaka2015} (see also \cite[Remark 4]{BobkovTanaka2017}) that $\beta_{ps}(\alpha) < +\infty$ for any $\alpha > \lambda_1(p)$, $\beta_{ps}(\lambda_1(p)) \geq \beta_*$, $\beta_{ps}(\cdot)$ is continuous and nonincreasing on $(\lambda_1(p),+\infty)$, and $\beta_{ps}(\alpha) = \lambda_1(q)$ for all $\alpha \geq \alpha_*$.
Moreover, $\alpha_* > \lambda_1(p)$ and $\beta_* > \lambda_1(q)$, see \cite[Lemma 2.1]{BobkovTanaka2017}.

\begin{theorem}[\protect{\cite[Theorem 2.2 and Proposition 4]{BobkovTanaka2015}}]\label{prop:betaps}
	Let $\alpha \in (\lambda_1(p), \alpha_*)$ and $\beta \in (-\infty,\beta_{ps}(\alpha)]$. 
	Then \eqref{eq:D} has at least one positive solution.
\end{theorem}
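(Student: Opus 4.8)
The plan is to combine a monotone comparison principle in the parameter $\beta$ with a compactness argument at the threshold $\beta_{ps}(\alpha)$, splitting the claim into two tasks: existence for every $\beta<\beta_{ps}(\alpha)$, and existence at the endpoint $\beta=\beta_{ps}(\alpha)$ itself. The hypothesis $\alpha\in(\lambda_1(p),\alpha_*)$ enters through the recalled facts $\alpha_*>\lambda_1(p)$, $\beta_{ps}(\alpha)>\lambda_1(q)$, so the genuinely new range is $\beta\in(\lambda_1(q),\beta_{ps}(\alpha)]$; for $\beta\le\lambda_1(q)$ existence already follows from the quoted result for $\alpha>\lambda_1(p)$, $\beta<\lambda_1(q)$ together with a limiting argument as below.

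First I fix $\beta\in(\lambda_1(q),\beta_{ps}(\alpha))$. By the very definition of $\beta_{ps}(\alpha)$ as a supremum there is $\beta'\in(\beta,\beta_{ps}(\alpha))$ for which the problem at parameters $(\alpha,\beta')$ has a positive solution $u'$, and $u'\in\mathrm{int}\,C^1_0(\overline{\Omega})_+$ by Remark~\ref{rem:positive}. Since $\beta'>\beta$ and $u'>0$, testing the equation for $u'$ against nonnegative functions shows that $u'$ is a weak supersolution of \eqref{eq:D}. I then minimize $\E$ over the order interval $\mathcal M:=\{u\in\W:\,0\le u\le u'\}$: this set is convex and closed, hence weakly closed, and on $\mathcal M$ the subtracted terms $\alpha\|u\|_p^p$, $\beta\|u\|_q^q$ are bounded because $0\le u\le u'$ in $L^\infty$, so $\E$ is bounded below with bounded minimizing sequences and, being weakly lower semicontinuous, attains its infimum at some $u_0\in\mathcal M$. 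The standard sub-supersolution argument (with subsolution $0$ and supersolution $u'$) guarantees that this constrained minimizer is an unconstrained critical point of $\E$, i.e.\ a solution of \eqref{eq:D}. To see $u_0\ne0$ it suffices to exhibit a competitor in $\mathcal M$ of negative energy, since $\E(0)=0$; here $\beta>\lambda_1(q)$ is decisive, as $G_\beta(\varphi_q)=\|\varphi_q\|_q^q(\lambda_1(q)-\beta)<0$ and, because $q<p$,
\[
\E(\epsilon\varphi_q)=\frac{\epsilon^p}{p}\,H_\alpha(\varphi_q)+\frac{\epsilon^q}{q}\,G_\beta(\varphi_q)<0
\qquad\text{for all small } \epsilon>0,
\]
the negative $\epsilon^q$-term dominating. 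Since $\varphi_q,u'\in\mathrm{int}\,C^1_0(\overline{\Omega})_+$, also $\epsilon\varphi_q\le u'$ for small $\epsilon$, so $\inf_{\mathcal M}\E<0$; thus $u_0$ is a nonzero nonnegative solution, hence positive by Remark~\ref{rem:positive}. This settles every $\beta<\beta_{ps}(\alpha)$.

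It remains to treat $\beta=\beta_{ps}(\alpha)$. Choose $\beta_n\nearrow\beta_{ps}(\alpha)$ with $\beta_n\in(\lambda_1(q),\beta_{ps}(\alpha))$ and let $u_n$ be the positive solutions just constructed. I first show $\{u_n\}$ is bounded in $\W$ by a blow-up argument: if $t_n:=\|\nabla u_n\|_p\to\infty$, then $v_n:=u_n/t_n$ satisfies, after dividing the equation by $t_n^{p-1}$,
\[
-\Delta_p v_n-t_n^{q-p}\Delta_q v_n=\alpha\, v_n^{p-1}+\beta_n\, t_n^{q-p}\, v_n^{q-1},
\]
where, since $q<p$, all $t_n^{q-p}$-terms vanish in the limit; the $(S_+)$-property of $-\Delta_p$ then yields $v_n\to v$ strongly with $\|\nabla v\|_p=1$, $v\ge0$ and $-\Delta_p v=\alpha\,v^{p-1}$. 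A nonzero nonnegative $p$-eigenfunction must correspond to $\lambda_1(p)$, contradicting $\alpha>\lambda_1(p)$. Hence $\{u_n\}$ is bounded, and by the $(S_+)$-property again $u_n\to u$ strongly along a subsequence, with $u\ge0$ solving \eqref{eq:D} at $\beta=\beta_{ps}(\alpha)$ upon passing to the limit in the weak formulation.

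The final and most delicate point—the main obstacle—is that this limit is nonzero, and this is exactly where $\alpha<\alpha_*$ is used. Suppose $u=0$, so $u_n\to0$ in $\W$ and hence in $W_0^{1,q}$. Put $s_n:=\|\nabla u_n\|_q\to0$ and $w_n:=u_n/s_n$; dividing the equation by $s_n^{q-1}$ gives
\[
-s_n^{p-q}\Delta_p w_n-\Delta_q w_n=\alpha\, s_n^{p-q}\, w_n^{p-1}+\beta_n\, w_n^{q-1},
\]
where now the $s_n^{p-q}$-terms vanish. The $(S_+)$-property of $-\Delta_q$ forces $w_n\to w$ strongly with $\|\nabla w\|_q=1$, $w\ge0$ and $-\Delta_q w=\beta_{ps}(\alpha)\,w^{q-1}$, so $w$ is a nonzero nonnegative $q$-eigenfunction with eigenvalue $\beta_{ps}(\alpha)$, forcing $\beta_{ps}(\alpha)=\lambda_1(q)$. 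But $\alpha<\alpha_*$ gives $\beta_{ps}(\alpha)>\lambda_1(q)$, a contradiction. Therefore $u\ne0$, and Remark~\ref{rem:positive} makes $u$ a positive solution of \eqref{eq:D} at $\beta=\beta_{ps}(\alpha)$, completing the proof.
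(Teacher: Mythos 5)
This theorem is imported from \cite[Theorem 2.2 and Proposition 4]{BobkovTanaka2015} and is not proved in the present paper, so I can only assess your argument on its own terms; with that said, it has two genuine gaps.

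First, the range $\beta\le\lambda_1(q)$ is not actually covered, and in particular the case $\beta=\lambda_1(q)$ (which belongs to the asserted range, since $\beta_{ps}(\alpha)\ge\beta_*(\alpha)>\lambda_1(q)$ for $\alpha<\alpha_*$, and which the paper explicitly relies on this theorem for at the end of Section~\ref{sec:knownfacts}). Your order-interval minimization produces a nonzero minimizer only because $\epsilon\varphi_q$ has negative energy, and that uses $\beta>\lambda_1(q)$ in an essential way: for $\beta\le\lambda_1(q)$ one has $G_\beta(u)\ge0$ for every $u$, every nonzero solution has positive energy (see the third recalled proposition in Section~\ref{sec:knownfacts}), and $\inf_{[0,u']}\E$ is attained at $0$, so the method yields nothing. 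The deferred ``limiting argument as below'' cannot rescue $\beta=\lambda_1(q)$ either, because its nontriviality step extracts a contradiction from the rescaled limit being a nonnegative first $q$-eigenfunction, i.e.\ from the limit parameter being strictly above $\lambda_1(q)$ --- which is vacuous when the target parameter \emph{is} $\lambda_1(q)$. This is exactly where the hypothesis $\alpha<\alpha_*$ must do real work, through $H_\alpha(\varphi_q)>0$ (note $\alpha_*=\|\nabla\varphi_q\|_p^p/\|\varphi_q\|_p^p$); your proof uses $\alpha<\alpha_*$ only to know $\beta_{ps}(\alpha)>\lambda_1(q)$, and a different variational construction (a positive-energy critical point) is needed on this range.

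Second, in the endpoint argument the assertion that ``the $s_n^{p-q}$-terms vanish'' is unjustified. Writing $t_n=\|\nabla u_n\|_p$, $s_n=\|\nabla u_n\|_q$, $w_n=u_n/s_n$, the $p$-Laplacian contribution against a test function is controlled by $s_n^{p-q}\|\nabla w_n\|_p^{p-1}=t_n^{p-1}/s_n^{q-1}$, and there is no a priori bound of $t_n$ by $s_n$ (only the reverse H\"older inequality $s_n\le C t_n$ holds on a bounded domain); if the gradients concentrate, e.g.\ $s_n\sim t_n^k$ with $k>(p-1)/(q-1)$, this quantity blows up and the $p$-term does not disappear, so the $(S_+)$ argument for $-\Delta_q$ cannot be started. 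To repair this you must first establish $t_n^p\le C s_n^q$, e.g.\ from the Nehari identity $H_\alpha(u_n)+G_{\beta_n}(u_n)=0$ combined with a uniform $L^\infty$ bound via $\|u_n\|_p^p\le\|u_n\|_\infty^{p-q}\|u_n\|_q^q$; this is doable but entirely missing. (The first rescaling $v_n=u_n/t_n$ is fine, precisely because $\|\nabla v_n\|_q\le C\|\nabla v_n\|_p=C$ there.) The part of your argument treating $\lambda_1(q)<\beta<\beta_{ps}(\alpha)$ is correct and is essentially the sub-/supersolution device the paper itself encapsulates in Lemma~\ref{lem:ss-method}.
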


However, the properties of $\beta_{ps}(\alpha)$ for $\alpha \in [\lambda_1(p), \alpha_*)$ are far from being completely understood. 
In particular, the asymptotic behaviour of $\beta_{ps}(\alpha)$ as $\alpha$ approaches $\lambda_1(p)$ was substantially unclear until the recent work \cite{BobkovTakanaPicone}, where the following results have been established by obtaining a nontrivial generalization of the classical Picone inequality \cite{Alleg} and using the generalized Picone inequalities \cite[Proposition 2.9]{BF}, \cite[Lemma 1]{ilyas}, and a radial symmetry result of \cite{BrockTakac}.

\begin{theorem}[\protect{\cite[Theorem 3.3]{BobkovTakanaPicone}}]\label{thm:summary} 
	We have $\beta_* \leq \beta_{ps}(\lambda_1(p))<+\infty$. 
	Moreover, {\renewcommand{\alphaM}{\lambda_1(p)}\eqref{eq:D}} has at least one positive solution if $\lambda_1(q) < \beta < \beta_{ps}(\lambda_1(p))$.
	Furthermore, if $\beta_{ps}(\lambda_1(p)) > \beta_*$, then {\renewcommand{\alphaM}{\lambda_1(p)}\eqref{eq:D}} has at least one positive solution if and only if $\lambda_1(q) < \beta \leq \beta_{ps}(\lambda_1(p))$. 
\end{theorem}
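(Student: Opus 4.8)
The plan is to prove the three assertions separately: the a priori bound via the (generalized) Picone inequalities, the filling of the open interval by a sub- and supersolution comparison, and the endpoint by a limiting argument.

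For the bound $\beta_*\le\beta_{ps}(\lambda_1(p))<+\infty$, the lower estimate is already recorded above, so only finiteness is at stake, i.e. a positive solution at $\alpha=\lambda_1(p)$ cannot exist for arbitrarily large $\beta$. Let $u$ be such a solution. Since $u,\varphi_p\in\mathrm{int}\,C_0^1(\overline{\Omega})_+$, the ratio $\varphi_p/u$ is bounded and $\psi:=\varphi_p^p/u^{p-1}\in\W$ is an admissible test function. The classical Picone inequality applied to the $p$-homogeneous part yields
\[
\intO|\nabla u|^{p-2}\nabla u\cdot\nabla\psi\,dx\le\|\nabla\varphi_p\|_p^p=\lambda_1(p)\,\|\varphi_p\|_p^p ,
\]
so subtracting this from the weak formulation \eqref{eq:D:weak} leaves the $q$-homogeneous part bounded below by $\beta\intO\varphi_p^p\,u^{q-p}\,dx$. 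A generalized Picone/Young inequality of the kind developed in the cited works bounds that same $q$-part from above by $C(p,q)\intO|\nabla\varphi_p|^q\,\varphi_p^{p-q}\,u^{q-p}\,dx$, whence
\[
\beta\le C(p,q)\,\frac{\intO|\nabla\varphi_p|^q\,\varphi_p^{p-q}\,u^{q-p}\,dx}{\intO\varphi_p^p\,u^{q-p}\,dx}.
\]
The right-hand side is a quotient weighted by $(\varphi_p/u)^{p-q}$, and the genuinely new input is to control it independently of $u$; I expect this to require the nontrivial generalization of the Picone inequality together with a symmetrization comparison, since the elementary weighted quotient above is not bounded by a constant uniform over all positive solutions.

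For the second assertion I would show that solvability propagates downward in $\beta$. Suppose the problem with $\alpha=\lambda_1(p)$ admits a positive solution $u_0$ for a parameter $\beta_0$, and let $\lambda_1(q)<\beta<\beta_0$. Then $u_0$ is a supersolution for the parameter $\beta$, as the right-hand side drops by $(\beta_0-\beta)u_0^{q-1}>0$, while $\varepsilon\varphi_q$ is a subsolution for all small $\varepsilon>0$: since $q<p$, the $(q-1)$-homogeneous surplus $\varepsilon^{q-1}(\beta-\lambda_1(q))\varphi_q^{q-1}>0$ dominates the $(p-1)$-homogeneous defect of order $\varepsilon^{p-1}$. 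Taking $\varepsilon$ small enough that also $\varepsilon\varphi_q\le u_0$, the standard sub- and supersolution method for the $(p,q)$-Laplacian produces a positive solution in the order interval $[\varepsilon\varphi_q,u_0]$. As $\beta_{ps}(\lambda_1(p))$ is a supremum, every $\beta<\beta_{ps}(\lambda_1(p))$ lies strictly below some admissible $\beta_0$, so this persistence yields a positive solution for each $\beta\in(\lambda_1(q),\beta_{ps}(\lambda_1(p)))$.

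For the equivalence under $\beta_{ps}(\lambda_1(p))>\beta_*$, necessity is immediate: $\beta\le\lambda_1(q)$ is excluded by Proposition~\ref{prop:nonexistence} with $\alpha=\lambda_1(p)$, and $\beta>\beta_{ps}(\lambda_1(p))$ by the definition of the threshold, so with the second assertion only the endpoint $\beta=\beta_{ps}(\lambda_1(p))$ remains. To reach it I would take $\beta_n\uparrow\beta_{ps}(\lambda_1(p))$ with positive solutions $u_n$ — chosen as the local minimizers of the energy, which furnish the solutions on $(\beta_*,\beta_{ps}(\lambda_1(p)))$ — and pass to the limit. If $u_n\to 0$, rescaling to unit norm makes the $q$-homogeneous part dominant, so the limit is a nonnegative first eigenfunction of the $q$-Laplacian and $\beta_{ps}(\lambda_1(p))=\lambda_1(q)$, contradicting $\beta_{ps}(\lambda_1(p))\ge\beta_*>\lambda_1(q)$. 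The strict inequality $\beta_{ps}(\lambda_1(p))>\beta_*$ is what prevents escape along the flat direction $\varphi_p$, since
\[
E_{\lambda_1(p),\beta}(t\varphi_p)=\frac{t^q}{q}\,(\beta_*-\beta)\,\|\varphi_p\|_q^q
\]
is bounded below precisely for $\beta\le\beta_*$, so for $\beta>\beta_*$ the local-minimizer branch stays in a fixed well and $\|u_n\|$ is uniformly bounded. With both degeneracies excluded, the $(S_+)$-property of the $(p,q)$-Laplacian upgrades the weak limit to a strong one, producing a nonzero solution at $\beta=\beta_{ps}(\lambda_1(p))$, positive by Remark~\ref{rem:positive}.

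The principal obstacle is the finiteness in the first assertion: removing the dependence on $u$ in the weighted quotient is exactly the step that forces the new Picone-type inequality. A secondary difficulty is the endpoint compactness, where the borderline role of $\beta_*$ as the threshold for boundedness from below of the energy must be used to keep the minimizing branch from running off to infinity along $\varphi_p$.
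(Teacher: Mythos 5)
First, a structural point: the paper does not prove this statement at all --- it is quoted verbatim from \cite[Theorem 3.3]{BobkovTakanaPicone}, so there is no in-paper argument to compare yours against. Judged on its own merits, your middle part (existence on the open interval $\lambda_1(q)<\beta<\beta_{ps}(\lambda_1(p))$) is sound and matches the toolkit the paper itself uses: a positive solution $u_0$ at some admissible $\beta_0>\beta$ is a supersolution for $\beta$, and Lemma \ref{lem:ss-method} then produces a positive solution in $[0,u_0]$ using only $\beta>\lambda_1(q)$. One caveat: your verification that $\varepsilon\varphi_q$ is a subsolution by comparing the $\varepsilon^{p-1}$ and $\varepsilon^{q-1}$ homogeneities pointwise is not rigorous as written, because $-\Delta_p\varphi_q$ is only a distribution and the inequality must hold against all nonnegative test functions, where $\int|\nabla\varphi_q|^{p-2}\nabla\varphi_q\nabla\varphi$ is not controlled by $\int\varphi_q^{q-1}\varphi$ uniformly in $\varphi$. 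The variational route through $\E^{[0,u_0]}$ (which needs no subsolution, only $E_{\lambda_1(p),\beta}(\varepsilon\varphi_q)<0$) avoids this entirely.

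The two remaining parts have genuine gaps. For finiteness of $\beta_{ps}(\lambda_1(p))$, you correctly set up the Picone scheme and then stop exactly where the theorem's content begins: the quotient $\intO|\nabla\varphi_p|^q\varphi_p^{p-q}u^{q-p}\,dx\,/\intO\varphi_p^p u^{q-p}\,dx$ is \emph{not} bounded uniformly over positive functions $u$ in the cone (the weight $\varphi_p^pu^{q-p}$ can concentrate near $\partial\Omega$, where the unweighted ratio of integrands behaves like $d(x)^{-q}$), so the equation for $u$ must be exploited again; this is precisely the nontrivial input of the cited work, and your proposal does not supply it. For the endpoint $\beta=\beta_{ps}(\lambda_1(p))$ under $\beta_{ps}(\lambda_1(p))>\beta_*$, your compactness mechanism is wrong as stated: for $\beta>\beta_*$ one has $E_{\lambda_1(p),\beta}(t\varphi_p)\to-\infty$, so the functional has \emph{no} ``fixed well'' and the minimizing branch could a priori run off to infinity along $\varphi_p$; moreover the order intervals $[0,w_n]$ confining your solutions depend on $n$ and are not uniformly bounded. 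The argument that actually works is the normalization/blow-up analysis of Lemma \ref{lem:PS} (equivalently \cite[Lemma 3.3]{BobkovTanaka2016}): if $\|\nabla u_n\|_p\to+\infty$ then $u_n/\|\nabla u_n\|_p\to\varphi_p$ strongly and $G_{\beta_n}(u_n/\|\nabla u_n\|_p)\to0$, which forces $G_\beta(\varphi_p)=0$, i.e.\ $\beta=\beta_*$, contradicting $\beta=\beta_{ps}(\lambda_1(p))>\beta_*$. Your identity for $E_{\lambda_1(p),\beta}(t\varphi_p)$ is the right object to look at, but it enters through this contradiction, not through boundedness from below of the energy.
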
 

\begin{theorem}[\protect{\cite[Theorem 3.2]{BobkovTakanaPicone}}]\label{thm:nonexistence0}
	Assume that one of the following assumptions is satisfied:
	\begin{enumerate}[label={\rm(\roman*)}]
		\item\label{thm:nonexistence:1x} 
		$p \in I(q)$, where
		\begin{equation*}\label{def:I(q)}  
		I(q) := \{p> 1:~ (q-1) s^p + q s^{p-1} - (p-q) s + (q-p+1) \ge 0 ~\text{for all}~ s\geq 0\};
		\end{equation*}
		\item\label{thm:nonexistence:2x} $p \leq q+1$ and $\Omega$ is an $N$-ball.
	\end{enumerate}	
	Then {\renewcommand{\alphaM}{\lambda_1(p)}\eqref{eq:D}} has no positive solution for $\beta > \beta_*$, that is, $\beta_{ps}(\lambda_1(p))=\beta_*$.
	Moreover, if $p<q+1$ and $\Omega$ is an $N$-ball, then {\renewcommand{\alphaM}{\lambda_1(p)}\eqref{eq:D}} has no positive solution also for $\beta = \beta_*$. 
\end{theorem}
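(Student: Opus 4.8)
The plan is to prove the contrapositive: I would show that the existence of a positive solution of $(D_{\lambda_1(p),\beta})$ already forces $\beta\le\beta_*$ (and $\beta<\beta_*$ in the refined ball case). Since Theorem~\ref{thm:summary} supplies the reverse bound $\beta_*\le\beta_{ps}(\lambda_1(p))$, this immediately gives $\beta_{ps}(\lambda_1(p))=\beta_*$. So let $u$ be a positive solution. By Remark~\ref{rem:positive} both $u$ and $\varphi_p$ belong to ${\rm int}\,C_0^1(\overline{\Omega})_+$, so Hopf's lemma makes the quotients $\varphi_p/u$ and $u/\varphi_p$ bounded above and below by positive constants on $\overline{\Omega}$. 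Consequently $\varphi_p^q/u^{q-1}$ and $u^{p-q}\varphi_p^{\,q-p+1}$ lie in $\W$ and are admissible test functions, a truncation near $\partial\Omega$ rendering the computations below rigorous.

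The core is a comparison of two integral identities. Inserting $\varphi_p^q/u^{q-1}$ into the weak formulation \eqref{eq:D:weak} of $(D_{\lambda_1(p),\beta})$ and splitting the left-hand side into its $p$- and $q$-parts $B_p$ and $B_q$ gives
\begin{equation*}
B_p + B_q = \lambda_1(p)\intO u^{p-q}\varphi_p^q\,dx + \beta\intO \varphi_p^q\,dx .
\end{equation*}
The classical Picone inequality applied to the matching-exponent term yields $B_q \le \|\nabla\varphi_p\|_q^q = \beta_*\intO\varphi_p^q\,dx$, with equality only if $u$ is proportional to $\varphi_p$. Testing the eigenvalue equation $-\Delta_p\varphi_p=\lambda_1(p)\varphi_p^{p-1}$ with $u^{p-q}\varphi_p^{\,q-p+1}$ rewrites the first term on the right as $\lambda_1(p)\intO u^{p-q}\varphi_p^q\,dx = \intO |\nabla\varphi_p|^{p-2}\nabla\varphi_p\cdot\nabla\!\left(u^{p-q}\varphi_p^{\,q-p+1}\right)dx$. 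Hence it suffices to prove the \emph{cross} (generalized) Picone inequality
\begin{equation*}
B_p = \intO |\nabla u|^{p-2}\nabla u\cdot\nabla\!\left(\frac{\varphi_p^q}{u^{q-1}}\right)dx
\le \intO |\nabla\varphi_p|^{p-2}\nabla\varphi_p\cdot\nabla\!\left(u^{p-q}\varphi_p^{\,q-p+1}\right)dx ;
\end{equation*}
granting it, adding the two estimates and cancelling the common term $\lambda_1(p)\intO u^{p-q}\varphi_p^q\,dx$ leaves $(\beta-\beta_*)\intO\varphi_p^q\,dx\le 0$, that is, $\beta\le\beta_*$.

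The heart of the matter, and the step I expect to be the main obstacle, is precisely this cross-Picone inequality. Writing the two integrands with $s:=\varphi_p/u$, the pointwise difference of the two sides collapses, after factoring out a nonnegative quantity, to a multiple of the polynomial $(q-1)s^p+qs^{p-1}-(p-q)s+(q-p+1)$ defining $I(q)$. The delicate point is that $\nabla u$ and $\nabla\varphi_p$ need not be parallel, and it is exactly the nonnegativity of this polynomial \emph{for all} $s\ge0$ that compensates for arbitrary relative directions of the two gradients; this is what assumption~\ref{thm:nonexistence:1x} provides. I would establish the estimate using the generalized Picone inequalities \cite[Proposition 2.9]{BF} and \cite[Lemma 1]{ilyas} together with a suitable generalization of the classical Picone inequality \cite{Alleg}. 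Its equality case characterises the proportionality $u=c\,\varphi_p$, which is impossible because $\varphi_p$ is not an eigenfunction of the $q$-Laplacian, see \cite[Proposition 13]{BobkovTanaka2017}.

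Finally, under assumption~\ref{thm:nonexistence:2x} I would invoke the radial symmetry result of \cite{BrockTakac} to conclude that on an $N$-ball both $u$ and $\varphi_p$ are radial, so that $\nabla u$ and $\nabla\varphi_p$ are everywhere parallel. In this aligned regime the direction-dependence disappears and, parametrising by the one-dimensional ratio $y$, the relevant remainder is governed instead by the sharper polynomial $(q-1)y^p-qy^{p-1}+(p-q)y+(q-p+1)$, which has a double root at $y=1$ and is nonnegative on $[0,\infty)$ precisely when $p\le q+1$; this reproduces condition~\ref{thm:nonexistence:2x}. When $p<q+1$ the constant term $q-p+1$ is strictly positive, so the remainder is strictly positive away from the proportional case, upgrading $\beta\le\beta_*$ to the strict bound $\beta<\beta_*$ and thereby excluding $\beta=\beta_*$ as well. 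Throughout, the only genuinely nontrivial ingredients are the cross-Picone inequality with its equality analysis and, for the ball, the radial reduction.
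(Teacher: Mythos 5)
First, a caveat: the paper you are working from does not prove Theorem \ref{thm:nonexistence0} at all --- it imports it verbatim from \cite{BobkovTakanaPicone} and only lists the ingredients of that proof (a new generalization of the classical Picone inequality, the generalized Picone inequalities of \cite{BF} and \cite{ilyas}, and the radial symmetry result of \cite{BrockTakac}). Measured against that ingredient list, your skeleton is the natural reconstruction: testing $(D_{\lambda_1(p),\beta})$ with $\varphi_p^q/u^{q-1}$, estimating the matching-exponent term by the classical $q$-Picone inequality to produce $\beta_*\intO\varphi_p^q\,dx$, and reducing the mismatched term to a ``cross'' Picone inequality whose pointwise remainder, after the substitution $s=\varphi_p|\nabla u|/(u|\nabla\varphi_p|)$, is governed by the polynomial defining $I(q)$. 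The reduction $\beta\le\beta_*$ from these two estimates is arithmetically correct, as is the use of Theorem \ref{thm:summary} for the reverse inequality.

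The gap is that the entire content of the cited theorem is concentrated in the cross-Picone inequality, which you assert rather than prove, and the assertion as phrased is incomplete on exactly the point you identify as delicate. The pointwise difference of the two sides is \emph{affine} in $\cos\theta$, where $\theta$ is the angle between $\nabla u$ and $\nabla\varphi_p$; its nonnegativity for all configurations therefore requires \emph{two} polynomial inequalities, one for each endpoint $\cos\theta=\pm1$. The anti-aligned endpoint gives precisely the polynomial defining $I(q)$; the aligned endpoint gives the companion polynomial $(q-1)s^p-qs^{p-1}+(p-q)s+(q-p+1)$, whose nonnegativity you correctly tie to $p\le q+1$. You never verify the aligned condition under hypothesis \ref{thm:nonexistence:1x}; it does follow, because evaluating the $I(q)$-polynomial at $s=0$ forces $q-p+1\ge0$, but this implication must be stated, otherwise the general-domain case is not closed. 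A second, more serious warning sign: your claim that equality in the $B_q$-estimate forces $u=c\,\varphi_p$ (impossible by \cite[Proposition 13]{BobkovTanaka2017}) would, if pushed through, exclude $\beta=\beta_*$ under hypothesis \ref{thm:nonexistence:1x} as well --- strictly more than the theorem asserts, and in tension with the paper's explicit remark after Theorem \ref{thm:1} that solvability of {\renewcommand{\alphaM}{\lambda_1(p)}\renewcommand{\betaM}{\beta_*}\eqref{eq:D}} is unknown outside the ball case. Either you have silently strengthened the theorem (unlikely) or the chain of estimates cannot be split and ``equalized'' termwise the way you assume; you must resolve this before the equality analysis can be trusted. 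Finally, in the ball case you need $\nabla u$ and $\nabla\varphi_p$ to be \emph{aligned}, not merely parallel, which requires the radial monotonicity contained in \cite{BrockTakac}, not only radial symmetry; this should be said explicitly.
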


\begin{remark}
	We recall that $q<p$ in Theorem \ref{thm:nonexistence0} by default.
	The set $I(q)$ is characterized in \cite[Lemma 1.6]{BobkovTakanaPicone}.
	In particular, it is known that for each $q>1$ there exists $\widetilde{p} \in (\max\{2,q\},q+1)$ such that
	$[2,\widetilde{p}] \subset I(q)$ and $(\widetilde{p},+\infty) \cap I(q) = \emptyset$.
\end{remark}

Theorem \ref{thm:nonexistence0} generates a natural question on whether $\beta_{ps}(\lambda_1(p)) > \beta_*$ if either the assumption \ref{thm:nonexistence:1x} or \ref{thm:nonexistence:2x} of this theorem is violated. 
Nontriviality of this question is supported by the fact that the behaviour of the energy functional $\E$ at the point $(\lambda_1(p), \beta_*)$ crucially depends on the relation between $p$ and $q$. 
\begin{theorem}[\protect{\cite[Theorem 2.6 (ii) and Remark 5]{BobkovTanaka2017}}]\label{thm:GM}
	We have the following assertions:
	\begin{enumerate}[label={\rm(\roman*)}]
		\item\label{thm:GM:3} If $p < 2q$, then $\inf_{\W} E_{\lambda_1(p), \beta_*} = -\infty$.
		\item\label{thm:GM:2} If $p = 2q$, and $\partial\Omega$ is connected when $N\ge 2$, then $\inf_{\W} E_{\lambda_1(p), \beta_*} \in (-\infty, 0)$.
		\item\label{thm:GM:1} If $p>2q$, and $\partial\Omega$ is connected when $N\ge 2$, then $\inf_{\W} E_{\lambda_1(p), \beta_*} \in (-\infty, 0)$ and the infimum is attained by a positive solution of {\renewcommand{\betaM}{\beta_*}\renewcommand{\alphaM}{\lambda_1(p)}\eqref{eq:D}}.
	\end{enumerate}
\end{theorem}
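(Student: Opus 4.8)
The plan is to reduce the computation of $\inf_{\W} E_{\lambda_1(p),\beta_*}$ to a scale-invariant maximization via the fibering method. Write $H(u):=\|\nabla u\|_p^p-\lambda_1(p)\|u\|_p^p$ and $G(u):=\|\nabla u\|_q^q-\beta_*\|u\|_q^q$, so that $E_{\lambda_1(p),\beta_*}(u)=\frac1p H(u)+\frac1q G(u)$. Since $\lambda_1(p)$ is the first eigenvalue, $H\ge0$ on $\W$ with equality exactly on $\mathbb{R}\varphi_p$, and for $u\in\mathbb{R}\varphi_p$ one has $G(u)=0$ by $q$-homogeneity and the definition of $\beta_*$. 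First I would analyse the fiber $t\mapsto E_{\lambda_1(p),\beta_*}(tu)=\frac{t^p}{p}H(u)+\frac{t^q}{q}G(u)$, $t>0$. If $G(u)\ge0$ it is nonnegative; if $G(u)<0$ then necessarily $H(u)>0$ (by the dichotomy above), and the fiber has a unique interior minimizer with
$$\min_{t>0}E_{\lambda_1(p),\beta_*}(tu)=-\frac{p-q}{pq}\,\frac{(-G(u))^{p/(p-q)}}{H(u)^{q/(p-q)}}=:-\frac{p-q}{pq}\,R(u).$$
Since every nonzero element of $\W$ lies on some fiber and $E_{\lambda_1(p),\beta_*}(0)=0$, this yields the identity $\inf_{\W}E_{\lambda_1(p),\beta_*}=-\frac{p-q}{pq}\sup R$, the supremum being over $\{u:\ G(u)<0\}$, where $R>0$. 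Thus the three regimes reduce to deciding whether $\sup R$ is $+\infty$, finite, or finite and attained.

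The second step is a local analysis near the degenerate ray $\mathbb{R}\varphi_p$, which settles case \ref{thm:GM:3} and exposes the threshold $p=2q$. Since $\varphi_p$ is not an eigenfunction of the $q$-Laplacian, $G'(\varphi_p)\ne0$ in the dual of $\W$, so I can fix a smooth $w$, supported away from the critical set of $\varphi_p$ and from $\partial\Omega$, with $\langle G'(\varphi_p),w\rangle<0$. Testing $R$ along $u_s=\varphi_p+sw$ as $s\downarrow0$, and using that the first variation of $H$ at $\varphi_p$ vanishes ($\varphi_p$ being a critical point of $H$), I obtain $-G(u_s)\sim s\,|\langle G'(\varphi_p),w\rangle|$ and $H(u_s)=O(s^2)$ (the second-variation integrals converge by the choice of $\mathrm{supp}\,w$), whence
$$R(u_s)\ \sim\ c\,s^{(p-2q)/(p-q)},\qquad c>0.$$
As $p-q>0$, for $p<2q$ the exponent is negative, so $R(u_s)\to+\infty$ and $\inf_{\W}E_{\lambda_1(p),\beta_*}=-\infty$, proving \ref{thm:GM:3}. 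For $p=2q$ the exponent vanishes and for $p>2q$ it is positive, so near $\mathbb{R}\varphi_p$ the functional $R$ stays bounded, respectively tends to $0$; this is the mechanism behind \ref{thm:GM:2} and \ref{thm:GM:1}.

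To upgrade this into $\sup R<+\infty$ for $p\ge2q$, I would normalise $\|\nabla u\|_p=1$. On this sphere $-G(u)\le\beta_*\|u\|_q^q$ is bounded by Poincar\'e's inequality and the embedding $\W\hookrightarrow L^q(\Omega)$, so $R$ can blow up only where $H(u)\to0$, i.e.\ as $u\to\pm\varphi_p$. The crux is therefore a uniform estimate near $\mathbb{R}\varphi_p$: writing $d$ for the distance to this ray, I expect the Lipschitz bound $-G(u)\le C_1 d$ (as $G$ vanishes on the ray) together with the transverse coercivity $H(u)\ge C_2 d^{2}$ (nondegeneracy of the second variation of $H$ at $\varphi_p$, using simplicity and isolation of $\lambda_1(p)$, valid since $p\ge2q>2$). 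These combine into $(-G(u))^p\le C\,d^{\,p}\le C'd^{\,2q}\le C''H(u)^q$ precisely when $p\ge2q$, giving $R\le C''^{1/(p-q)}$ near $\mathbb{R}\varphi_p$. Hence $\sup R<+\infty$ and $\inf_{\W}E_{\lambda_1(p),\beta_*}\in(-\infty,0)$, the strict negativity coming from $\sup R>0$. \emph{This coercivity estimate $H\gtrsim d^2$ matched against the linear decay of $-G$ is the main obstacle}, since it demands a quantitative nondegeneracy of $\varphi_p$ rather than a soft argument.

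Finally, for $p>2q$ I would prove attainment by the direct method. For a maximising sequence $u_n$ with $\|\nabla u_n\|_p=1$ and $R(u_n)\to\sup R>0$, the decay $R\to0$ near $\pm\varphi_p$ forces $H(u_n)\ge c>0$, keeping $u_n$ away from the degenerate ray. Passing to a weak limit $u_0$ (with strong $L^p,L^q$ convergence by compact embedding), weak lower semicontinuity of the Dirichlet terms gives $-G(u_0)\ge\limsup(-G(u_n))$ and $H(u_0)\le\liminf H(u_n)$, hence $R(u_0)\ge\sup R$; the bounds $H(u_n)\ge c$ and $R(u_n)\to\sup R>0$ exclude $u_0=0$ and $u_0\in\mathbb{R}\varphi_p$, so $u_0$ is admissible and realises $\sup R$. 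Replacing $u_0$ by $|u_0|$ (which leaves $H$, $G$, hence $R$, unchanged), the rescaled function $t_{u_0}|u_0|$ is a global minimiser of $E_{\lambda_1(p),\beta_*}$, thus a critical point, i.e.\ a nonzero nonnegative weak solution; by Remark \ref{rem:positive} it lies in $\mathrm{int}\,C_0^1(\overline{\Omega})_+$ and is positive, completing \ref{thm:GM:1}.
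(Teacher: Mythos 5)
This theorem is not proved in the present paper; it is quoted from \cite{BobkovTanaka2017}, and the surrounding text tells you exactly where the difficulty sits: the connectedness of $\partial\Omega$ ``is required in the proof of Theorem \ref{thm:GM} \ref{thm:GM:2}, \ref{thm:GM:1} due to the usage of the improved Poincar\'e inequality obtained in \cite{takac}'', and removing that assumption is an open conjecture. Your overall strategy is the same as the actual one: the fibering reduction $\inf_{\W}E_{\lambda_1(p),\beta_*}=-\frac{p-q}{pq}\sup R$ is correct, your blow-up analysis along $\varphi_p+sw$ for $p<2q$ is essentially the argument used there (and is sound, modulo the minor point that for $p<2$ you must also check that a direction $w$ avoiding the critical set of $\varphi_p$ can be chosen with $\langle G'(\varphi_p),w\rangle<0$), and your direct-method attainment argument for $p>2q$ is fine once boundedness of $R$ near $\mathbb{R}\varphi_p$ is in hand.

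The genuine gap is precisely the step you flag as ``the main obstacle'' and then only \emph{expect}: the transverse coercivity of $H$ near the ray $\mathbb{R}\varphi_p$. As stated, $H(u)\ge C_2\,d^2$ with $d$ the $\W$-distance to the ray is not a soft nondegeneracy fact and is not even the correct form of the known estimate. What is actually available (Fleckinger-Pell\'e--Tak\'a\v{c}) is, for $p>2$ and $\partial\Omega$ connected, a bound of the type $H(\varphi_p+v)\ge c\left(\int_\Omega|\nabla\varphi_p|^{p-2}|\nabla v|^2\,dx+\|\nabla v\|_p^p\right)$ for $v$ in a complement of $\varphi_p$; the weight $|\nabla\varphi_p|^{p-2}$ degenerates on the critical set of $\varphi_p$, so this weighted $L^2$ norm does not control $\|\nabla v\|_p^2$, and matching it against the linear decay $-G(\varphi_p+v)=O(\|\nabla v\|_q)$ so as to conclude $(-G)^p\le C H^q$ is the delicate computation carried out in \cite[pp.~1233--1234]{BobkovTanaka2017}. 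Your proposal never uses the hypothesis that $\partial\Omega$ is connected, which is a structural warning sign: a proof of \ref{thm:GM:2}--\ref{thm:GM:1} that does not invoke it would settle an open problem. Until the coercivity estimate is proved in a form compatible with the upper bound on $-G$, parts \ref{thm:GM:2} and \ref{thm:GM:1} remain unestablished; part \ref{thm:GM:3} and the reduction and attainment arguments are fine.
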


Let us remark that the connectedness of $\partial \Omega$ is required in the proof of Theorem \ref{thm:GM} \ref{thm:GM:2}, \ref{thm:GM:1} due to the usage of the improved Poincar\'e inequality obtained in \cite{takac}. 
It is conjectured in \cite[Section 3.1]{takac}, however, that this assumption on $\partial \Omega$ can be omitted for sufficiently regular domains. 
To the best of our knowledge, this conjecture is still open.

Theorem \ref{thm:GM} suggests that not only the behaviour of $E_{\lambda_1(p), \beta_*}$ but also the structure of the solution set of \eqref{eq:D} in a neighbourhood of the point $(\lambda_1(p), \beta_*)$ is different in the cases $p<2q$, $p=2q$, and $p>2q$, which possibly affects the relation between $\beta_{ps}(\lambda_1(p))$ and $\beta_*$. Indeed, this turns out to be true, and the precise results will be formulated in Section \ref{sec:mainresults} below.

\medskip
A finer existence result in the quadrant $(\lambda_1(p),+\infty) \times (\lambda_1(q),+\infty)$ can be obtained if we introduce the following family of critical values for $\alpha \geq \lambda_1(p)$:
\begin{equation}\label{def:alpha_*}
	\beta_*(\alpha) := \inf\left\{ \frac{\|\nabla u\|_q^q}{\|u\|_q^q}:~
	u\in\W\setminus \{0\} \text{ and } H_\alpha(u)\le 0\right\},
\end{equation}
or, equivalently,
\begin{equation*}
\beta_*(\alpha) 
= 
\inf
\left\{ 
\frac{\|\nabla u\|_q^q}{\|u\|_q^q}:~
u\in\W\setminus \{0\} \text{ and } 
\frac{\|\nabla u\|_p^p}{\|u\|_p^p} \leq \alpha
\right\}.
\end{equation*}
It is known that $\beta_*(\cdot)$ is continuous and nonincreasing on $[\lambda_1(p),+\infty)$, $\beta_*(\lambda_1(p)) = \beta_* > \beta_*(\alpha) \geq \lambda_1(q)$ for $\alpha > \lambda_1(p)$, 
and $\beta_*(\alpha) > \lambda_1(q)$ if and only if $\alpha<\alpha_*$, see \cite[Proposition 7]{BobkovTanaka2017}.

\begin{theorem}[\protect{\cite[Theorem 2.7]{BobkovTanaka2017}}]\label{thm:multi}
	Let $\lambda_1(p)<\alpha<\alpha_*$ and $\lambda_1(q)<\beta\le \beta_*(\alpha)$. 
	Then \eqref{eq:D} has at least two positive solutions $u_1$ and $u_2$ such that $\E(u_1)<0$, $\E(u_2)>0$ if $\beta<\beta_*(\alpha)$, and $\E(u_2)=0$ if $\beta=\beta_*(\alpha)$. 
	Moreover, $u_1$ is the global least energy solution, and if $\beta<\beta_*(\alpha)$, then 
	$u_2$ has the least energy among all solutions $w$ of \eqref{eq:D} with $\E(w) > 0$.
\end{theorem}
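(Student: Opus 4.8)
The plan is to rely on the fibering method and the Nehari manifold $\N=\{u\in\W\setminus\{0\}:~\langle\E'(u),u\rangle=0\}$, which here is the set $H_\alpha(u)+G_\beta(u)=0$ and contains every nonzero critical point of $\E$. Since $H_\alpha$ is $p$-homogeneous and $G_\beta$ is $q$-homogeneous, the fiber map $t\mapsto\E(tu)$ is controlled entirely by the signs of $H_\alpha(u)$ and $G_\beta(u)$, and it possesses a unique nonzero critical point exactly when these signs are opposite. Accordingly I would split $\N=\N^+\cup\N^0\cup\N^-$ according to $H_\alpha(u)>0$, $H_\alpha(u)=0$, $H_\alpha(u)<0$; since $G_\beta=-H_\alpha$ on $\N$, one has $\E=\left(\tfrac1p-\tfrac1q\right)H_\alpha=\tfrac{q-p}{pq}H_\alpha$, so that $\E<0$ on $\N^+$ and $\E>0$ on $\N^-$. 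This dichotomy already mirrors the two announced solutions, and the natural choice is to define $u_1$ as a minimizer of $\E$ over $\N^+$ and $u_2$ as a minimizer over $\N^-$; this also delivers the two least-energy characterizations without extra work.

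The role of the hypotheses I would isolate in a single separation property: for $\beta<\beta_*(\alpha)$ there is no nonzero $w$ with both $H_\alpha(w)\le0$ and $G_\beta(w)\le0$. Indeed, by the definition \eqref{def:alpha_*}, $H_\alpha(w)\le0$ with $w\ne0$ forces $\|\nabla w\|_q^q/\|w\|_q^q\ge\beta_*(\alpha)>\beta$, hence $G_\beta(w)>0$. In particular $\N^0=\emptyset$ when $\beta<\beta_*(\alpha)$. Nonemptiness of $\N^+$ and $\N^-$ I would check on test functions: from $\beta>\lambda_1(q)$ and $\alpha<\alpha_*$ one gets $G_\beta(\varphi_q)=\|\varphi_q\|_q^q(\lambda_1(q)-\beta)<0$ and $H_\alpha(\varphi_q)=\|\varphi_q\|_p^p(\alpha_*-\alpha)>0$, placing the fiber minimum of $\varphi_q$ in $\N^+$, while any direction with $H_\alpha<0$ (available since $\alpha>\lambda_1(p)$, e.g.\ $\varphi_p$) and $G_\beta>0$ yields a fiber maximum in $\N^-$.

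The two constrained minimizations then run in parallel, powered by one compactness argument. The key point is that on $\N$ one has $|H_\alpha(u)|=|G_\beta(u)|\le C\|\nabla u\|_p^q$ with $q<p$; consequently, along any sequence in $\N^\pm$ for which $\|\nabla u_n\|_p\to\infty$, the normalizations $w_n=u_n/\|\nabla u_n\|_p$ satisfy $1-\alpha\|w_n\|_p^p=H_\alpha(u_n)/\|\nabla u_n\|_p^p\to0$, so a weak limit $w$ obeys $\|w\|_p^p=1/\alpha\ne0$ and $H_\alpha(w)\le0$, while weak lower semicontinuity of $G_\beta$ and the sign it carries on $\N^\pm$ (together with boundedness of the energy along a minimizing sequence) force $G_\beta(w)\le0$. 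By the separation property this is impossible for $\beta<\beta_*(\alpha)$, so all minimizing sequences are bounded; the same mechanism shows $\inf_{\N^+}\E>-\infty$ and $\inf_{\N^-}\E>0$ (a vanishing infimum would again produce a forbidden $w$). Boundedness plus the $(S_+)$ property of $-\Delta_p-\Delta_q$ upgrades weak convergence to strong and yields minimizers $u_1\in\N^+$, $u_2\in\N^-$. Because $\N^0=\emptyset$, these are genuine critical points of $\E$ and not merely constrained ones: writing the Lagrange relation $\E'(u)=\mu\Phi'(u)$ for $\Phi(u)=H_\alpha(u)+G_\beta(u)$ and pairing with $u$ gives $0=\mu(p-q)H_\alpha(u)$, and $H_\alpha(u)\ne0$ off $\N^0$ forces $\mu=0$.

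To conclude, replacing $u_i$ by $|u_i|$ changes neither $\E$ nor $H_\alpha,G_\beta$, so I may take the solutions nonnegative, and Remark~\ref{rem:positive} promotes them to $\mathrm{int}\,C_0^1(\overline{\Omega})_+$. The least-energy statements are then immediate from the decomposition: every nonzero solution lies on $\N$, those with positive energy necessarily lie in $\N^-$, so $\E(u_2)=\inf_{\N^-}\E$ is minimal among them, while $u_1$ realizes $\inf_{\N}\E$ and is therefore the global least-energy solution. The main obstacle is the borderline $\beta=\beta_*(\alpha)$, where the separation property fails, $\N^0$ may be nonempty, and the compactness argument degenerates; there one works with the limiting value $\beta_*(\alpha)$ directly and shows that the second solution survives but with its fiber maximum collapsing onto $\N^0$, which is exactly why $\E(u_2)=0$ in that case.
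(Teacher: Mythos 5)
First, note that the paper does not prove this statement: it is quoted verbatim from \cite[Theorem 2.7]{BobkovTanaka2017}, so your proposal can only be measured against the strategy of that reference, which is indeed the fibering/Nehari-manifold approach you adopt. For the range $\beta<\beta_*(\alpha)$ your argument is essentially sound: the separation property derived from \eqref{def:alpha_*} (no nonzero $w$ with $H_\alpha(w)\le 0$ and $G_\beta(w)\le 0$) correctly yields $\N^0=\emptyset$, the coercivity of $\E$ on $\N^+$, the positive lower bound for $\E$ on $\N^-$, and the nontriviality of weak limits of normalized minimizing sequences; the Lagrange-multiplier computation $0=\mu(p-q)H_\alpha(u)$ is the right way to pass from constrained to free critical points, and the least-energy characterizations do follow from the sign dichotomy $\E=\tfrac{q-p}{pq}H_\alpha$ on $\N$. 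One technical slip: the $(S_+)$ property is not what closes the existence of the minimizers, since a minimizing sequence on $\N^\pm$ is not a priori a Palais--Smale sequence. What actually works is the direct method on the $0$-homogeneous fibered functional $\J$ of \eqref{def:J}: weak lower semicontinuity of $\|\nabla\cdot\|_p^p$ and $\|\nabla\cdot\|_q^q$ together with the sign information gives $\J(v)\le\liminf\J(v_n)$ for the weak limit $v$ of the normalized sequence, and then $t_{\alpha,\beta}(v)v$ realizes the infimum. This is repairable, not a wrong idea.

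The genuine gap is the endpoint $\beta=\beta_*(\alpha)$, which the theorem explicitly covers and where it asserts a second positive solution with $\E(u_2)=0$. Your separation property degenerates there: from \eqref{def:alpha_*} one can only exclude $H_\alpha(w)\le 0$ together with $G_{\beta_*(\alpha)}(w)<0$, not $G_{\beta_*(\alpha)}(w)=0$. Hence $\N^0$ may be nonempty, $\inf_{\N^-}\E$ drops to $0$ and need not be attained on $\N^-$, and the candidate $u_2$ must be produced on $\N^0$, where the fiber map $t\mapsto\E(tu)$ has no isolated critical point and your multiplier argument $0=\mu(p-q)H_\alpha(u)$ collapses because $H_\alpha(u)=0$. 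Your closing sentence describes the expected picture (``the fiber maximum collapsing onto $\N^0$'') but contains no argument: one needs either a limiting procedure $\beta_n\nearrow\beta_*(\alpha)$ with uniform bounds on the corresponding $u_2(\beta_n)$ and a proof that the limit is a \emph{nonzero} solution at level $0$, or a constrained minimization adapted to the degenerate set. As written, the case $\beta=\beta_*(\alpha)$ is asserted, not proved.
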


In particular, Theorem \ref{thm:multi} yields
\begin{equation}\label{eq:betabeta}
\beta_*(\alpha)\le \beta_{ps}(\alpha)
\quad 
\text{ for all~ } 
\alpha\ge \lambda_1(p).
\end{equation}
Moreover, $\beta_*(\alpha) = \beta_{ps}(\alpha) = \lambda_1(q)$ for all $\alpha \geq \alpha_*$. 
The most essential open question here was whether the strict inequality in \eqref{eq:betabeta} holds. 
This issue is addressed in the present article, see the following subsection.

\medskip
Finally, in accordance with the results described above, the only place on the $(\alpha,\beta)$-plane where it remains to discuss the existence of positive solutions of \eqref{eq:D} is the interval $[\alpha_*,+\infty) \times \{\lambda_1(q)\}$.
It was proved in \cite[Proposition 4 (ii)]{BobkovTanaka2015} that {\renewcommand{\betaM}{\lambda_1(q)}\eqref{eq:D}} has no positive solution whenever $\alpha > \alpha_*$. 
In fact, the proof of \cite[Proposition 4 (ii)]{BobkovTanaka2015} can be slightly updated in order to show that the nonexistence persists also in the case $\alpha=\alpha_*$. 
Indeed, it follows from the proof of \cite[Proposition 4 (ii)]{BobkovTanaka2015} that if  {\renewcommand{\betaM}{\lambda_1(q)}\renewcommand{\alphaM}{\alpha_*}\eqref{eq:D}} possesses a positive solution $u$, then $u = k\varphi_q$ for some $k >0$. However, this is impossible in view of \cite[Proposition 13]{BobkovTanaka2017}.
Thus, thanks to Proposition \ref{prop:nonexistence} and Theorem \ref{prop:betaps}, {\renewcommand{\betaM}{\lambda_1(q)}\eqref{eq:D}} possesses a positive solution if and only if $\alpha \in (\lambda_1(p), \alpha_*)$.

\subsection{Statements of main results}\label{sec:mainresults}

For convenience, we introduce the following hypothesis:
\begin{enumerate}[label={(H)}]
	\item\label{H} ~$p>2q$, and if $N \geq 2$, then $\partial\Omega$ is connected. 
\end{enumerate}

\begin{figure}[ht]
	\centering
	\includegraphics[width=0.7\linewidth]{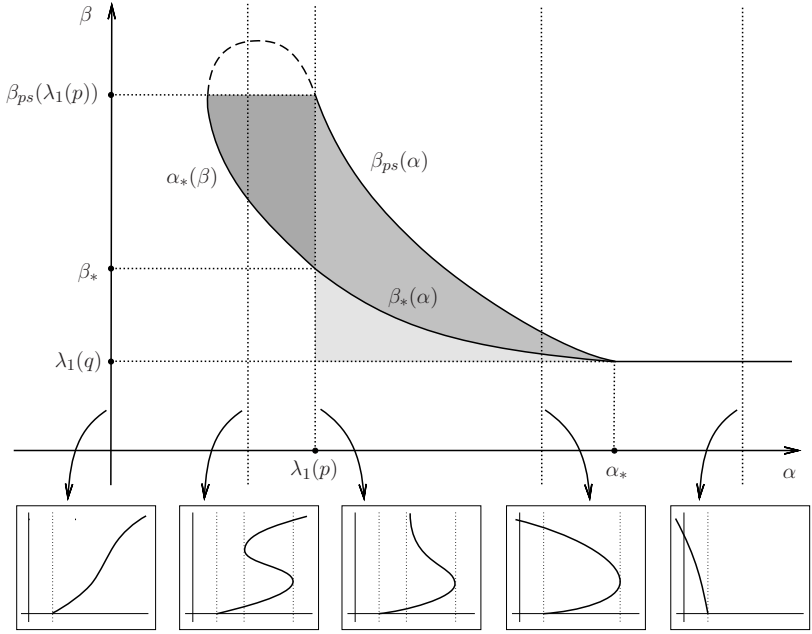}\\
	\caption{The case $p>2q$. The behaviour of $\beta_*(\alpha)$, $\beta_{ps}(\alpha)$, $\alpha_*(\beta)$, and alleged ``minimal'' bifurcation diagrams for the $L^\infty$-norms of positive solutions of \eqref{eq:D} with respect to $\beta$ for several fixed $\alpha$'s. 
	Light grey - two positive solutions, one of which is with positive energy and another one is with negative energy; grey - two positive solutions with negative energy; dark grey - three positive solutions with negative energy.}
	\label{fig:1}
\end{figure}

\begin{figure}[ht]
	\centering
	\includegraphics[width=0.7\linewidth]{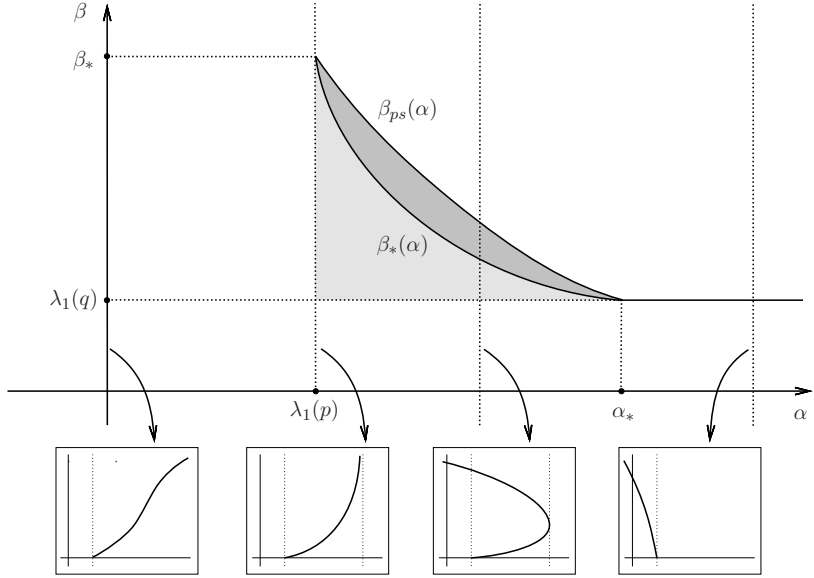}\\
	\caption{$p,q$ as in Theorem \ref{thm:nonexistence0}. 
	The behaviour of $\beta_*(\alpha)$, $\beta_{ps}(\alpha)$, and alleged ``minimal'' bifurcation diagrams for the $L^\infty$-norms of positive solutions of \eqref{eq:D} with respect to $\beta$ for several fixed $\alpha$'s. Light grey - two positive solutions, one of which is with positive energy and another one is with negative energy; grey - two positive solutions with negative energy.}
	\label{fig:2}
\end{figure}

\noindent
Our first result is devoted to the relation between $\beta_{ps}(\alpha)$ and $\beta_*(\alpha)$.
\begin{theorem}\label{thm:1}
Let $\alpha\in [\lambda_1(p), \alpha_*)$, 
and assume \textnormal{\ref{H}} if $\alpha=\lambda_1(p)$. 
Then there exists 
$\tilde{\beta}(\alpha)>\beta_*(\alpha)$ such that \eqref{eq:D} possesses a positive solution $u$ for any $\beta \in (\beta_*(\alpha), \tilde{\beta}(\alpha)]$. Moreover, $u$ is a local minimum point of $\E$ and $\E(u) < 0$. 
\end{theorem}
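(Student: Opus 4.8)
The plan is to realize $u$ as a local minimizer that survives when $\beta$ is pushed slightly above $\beta_*(\alpha)$, starting from the positive local-minimum solution that is already available at $\beta_0 := \beta_*(\alpha)$. For $\alpha\in(\lambda_1(p),\alpha_*)$ this base solution is the least-energy positive solution $u_1$ from Theorem \ref{thm:multi}; from its construction (minimization of $\E$ over the part of the Nehari set where $H_\alpha>0$, on which $\E=\tfrac{q-p}{pq}H_\alpha<0$) it is a local minimizer of $\E$. For $\alpha=\lambda_1(p)$, where I invoke \ref{H}, the base solution is the minimizer of $\inf_{\W}E_{\lambda_1(p),\beta_*}$ furnished by Theorem \ref{thm:GM}\ref{thm:GM:1}. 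In either case I obtain $u_0\in\mathrm{int}\,C^1_0(\overline\Omega)_+$ solving the problem \eqref{eq:D} at $\beta=\beta_0$ with $E_{\alpha,\beta_0}(u_0)<0$. Since $u_0$ lies in the interior of the positive cone, it is a local minimizer of $E_{\alpha,\beta_0}$ in the $\C$-topology, and by the standard equivalence of $C^1$- and $W_0^{1,p}$-local minimizers for $(p,q)$-type functionals this upgrades to a local minimizer in $\W$. I will in fact need the stronger property that $u_0$ is a \emph{strict} local minimizer, i.e.\ $E_{\alpha,\beta_0}(v)>E_{\alpha,\beta_0}(u_0)$ whenever $0<\|v-u_0\|_{\W}\le r_0$.

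To force positivity and to localize away from the directions along which $\E$ is unbounded below for $\beta>\beta_0$ (namely rays $tv$ with $H_\alpha(v)=0$, $G_\beta(v)<0$, which exist exactly because $\beta>\beta_*(\alpha)$, e.g.\ $v$ an extremal of the infimum in \eqref{def:alpha_*}), I would work with the truncated energy $E_{+,\beta}(u):=\tfrac1p(\|\nabla u\|_p^p-\alpha\|u_+\|_p^p)+\tfrac1q(\|\nabla u\|_q^q-\beta\|u_+\|_q^q)$, where $u_+:=\max\{u,0\}$. Its critical points are nonnegative (test with $u_-$) and, being nontrivial, positive by Remark \ref{rem:positive}; since $E_{+,\beta_0}$ agrees with $E_{\alpha,\beta_0}$ near $u_0>0$, the point $u_0$ is still a strict local minimizer of $E_{+,\beta_0}$. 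For $\beta>\beta_0$ I then minimize $E_{+,\beta}$ over the closed ball $\overline{B_r(u_0)}\subset\W$ with $r\le r_0$. As $E_{+,\beta}$ is weakly lower semicontinuous (the gradient terms are weakly l.s.c.\ and the $u_+$-terms are weakly continuous by compactness of $\W\hookrightarrow L^p,L^q$) and $\overline{B_r(u_0)}$ is bounded, convex, hence weakly closed, a minimizer $u_\beta$ exists, with $E_{+,\beta}(u_\beta)\le E_{+,\beta}(u_0)=E_{\alpha,\beta_0}(u_0)-\tfrac{\beta-\beta_0}{q}\|u_0\|_q^q<0$.

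The decisive step is to show that for $\beta$ close enough to $\beta_0$ the minimizer $u_\beta$ is interior, $\|u_\beta-u_0\|_{\W}<r$: then it is a free critical point of $E_{+,\beta}$, hence a positive solution of \eqref{eq:D}, and being an interior minimizer over a ball it is automatically a local minimizer of $E_{+,\beta}$, so (using $E_{+,\beta}=\E$ on the positive cone together with the $C^1$–$W_0^{1,p}$ equivalence once more) a local minimizer of $\E$ with $\E(u_\beta)<0$. I argue by contradiction: if along some sequence $\beta_n\downarrow\beta_0$ every minimizer $u_n$ lies on $\partial B_r$, I pass to a weak limit $u_n\rightharpoonup\bar u\in\overline{B_r(u_0)}$ and combine $E_{+,\beta_n}(u_n)\le E_{+,\beta_n}(u_0)\to E_{+,\beta_0}(u_0)$ with weak lower semicontinuity to get $E_{+,\beta_0}(\bar u)\le E_{+,\beta_0}(u_0)$; as $\bar u$ lies in the ball on which $u_0$ is a local minimum, equality holds and $E_{+,\beta_0}(u_n)\to E_{+,\beta_0}(\bar u)$. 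Since the lower-order terms converge, the two convex gradient energies converge, and with the uniform convexity of $L^p$ and $L^q$ (Radon--Riesz, i.e.\ the $(S_+)$-property of $-\Delta_p-\Delta_q$) I upgrade to strong convergence $u_n\to\bar u$ in $\W$; hence $\|\bar u-u_0\|_{\W}=r$, so $\bar u\ne u_0$ is a minimizer of $E_{+,\beta_0}$ on $\overline{B_r(u_0)}$ attaining the value $E_{+,\beta_0}(u_0)$, contradicting the strict local minimality of $u_0$.

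I expect the main obstacle to be precisely this strict local minimality used to close the argument: at the threshold $\beta_0=\beta_*(\alpha)$ the minimization over the positive Nehari set is on the verge of losing compactness (for $\beta>\beta_0$ its infimum drops to $-\infty$ along the escaping directions above), so one must rule out a degenerate, non-strict minimum. I would establish strictness by showing $u_0$ is the unique minimizer of $\E$ in a small $\W$-ball, using its membership in $\mathrm{int}\,C^1_0(\overline\Omega)_+$, the $C^1$-versus-$W_0^{1,p}$ equivalence, and the fibering structure of $\E$ (along each ray through a point $u$ with $H_\alpha(u)>0$ the fibre $t\mapsto\E(tu)$ has a nondegenerate minimum, whose second derivative there is proportional to $(p-q)H_\alpha(u)>0$); failing a clean uniqueness statement, a quantitative growth bound $\E(v)-\E(u_0)\gtrsim\|v-u_0\|_{\W}^{s}$ near $u_0$ would suffice. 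A secondary point requiring care is the case $\alpha=\lambda_1(p)$, where the existence and good behaviour of $u_0$ rely on \ref{H} through Theorem \ref{thm:GM}\ref{thm:GM:1}. Once interiority holds on a maximal right-neighbourhood of $\beta_0$, setting $\tilde\beta(\alpha)$ to be its right endpoint (and checking attainment there) yields the asserted positive local-minimum solution with $\E<0$ for all $\beta\in(\beta_*(\alpha),\tilde\beta(\alpha)]$, and in particular $\beta_*(\alpha)<\beta_{ps}(\alpha)$.
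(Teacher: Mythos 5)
Your overall strategy (continue the local minimizer from $\beta=\beta_*(\alpha)$ to slightly larger $\beta$ by a localized minimization) is the same Ilyasov--Silva-type idea the paper uses, and most of the surrounding steps are sound: the base solution $u_0$ at the threshold, the truncation $E_{+,\beta}$, weak lower semicontinuity on the closed ball, the $(S_+)$ upgrade to strong convergence, and the $C^1$-versus-$\W$ step. But the argument hinges entirely on the \emph{strict} local minimality of $u_0$ for $E_{\alpha,\beta_*(\alpha)}$, and this is precisely the point you leave open. Without strictness the final contradiction evaporates: the strong limit $\bar u$ on the sphere $\partial B_r(u_0)$ with $E_{+,\beta_0}(\bar u)=E_{+,\beta_0}(u_0)$ is simply another minimizer in the ball, which is perfectly consistent with $u_0$ being a non-strict local minimum. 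The tools you propose for strictness do not close this gap: nondegeneracy of the fiber map $t\mapsto\E(tu_0)$ controls only the radial direction and says nothing about transversal degeneracies, and minimizers of the Nehari-type problem at the threshold $\beta=\beta_*(\alpha)$ are not known to be unique or isolated --- the paper itself is careful to work with the whole, possibly non-singleton, set of such minimizers.

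The paper avoids strict minimality altogether. It minimizes the fibered functional $\J$ subject to the \emph{decoupled} constraint $G_\mu(u)<0<H_\alpha(u)$ with an auxiliary parameter $\mu<\beta_*(\alpha)$, and the quantitative substitute for strictness is Proposition \ref{prop:2}: the supremum $\mu_0$ of $\|\nabla u_*\|_q^q/\|u_*\|_q^q$ over \emph{all} minimizers $u_*$ at the threshold is strictly below $\beta_*(\alpha)$, so for $\mu\in(\mu_0,\beta_*(\alpha))$ the constraint is uniformly inactive in the limit $\beta_n\searrow\beta_*(\alpha)$. Proving this is the real work: one must exclude a sequence of threshold minimizers with $G_{\beta_*(\alpha)}(u_k)\to 0$, which forces $t_{\alpha,\beta_*(\alpha)}(u_k)\to+\infty$ and blow-up along $\varphi_p$; this is ruled out for $\alpha>\lambda_1(p)$ by \cite[Lemma 3.3]{BobkovTanaka2016} and for $\alpha=\lambda_1(p)$ by the improved Poincar\'e inequality of \cite{takac} --- which is where \ref{H} enters beyond merely supplying $u_0$. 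Nothing of this kind appears in your proposal, so as written it is incomplete; to make your ball-minimization version work you would need either a proof of strict (or isolated) local minimality at the threshold, or a replacement such as the paper's uniform bound $\mu_0<\beta_*(\alpha)$.
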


The idea of the proof of Theorem \ref{thm:1} is based on the recent work \cite{IK}, where the authors obtained a local continuation of the branch of least energy solutions of an elliptic problem with indefinite nonlinearity using an original variational argument of a constrained minimization type.

Theorem \ref{thm:1} implies, in particular, that 
\begin{equation*}\label{eq:b<b0}
\beta_*(\alpha) < \beta_{ps}(\alpha)
\quad 
\text{ for all~ } 
\alpha \in (\lambda_1(p), \alpha_*),
\end{equation*}
and that $\beta_* < \beta_{ps}(\lambda_1(p))
$ provided the additional assumption \ref{H} is satisfied. 
On the other hand, we know from Theorem \ref{thm:nonexistence0} that $\beta_* = \beta_{ps}(\lambda_1(p))$ if either $p \in I(q)$ or $p \leq q+1$ and $\Omega$ is an $N$-ball.
Nevertheless, it remains unknown whether  $\beta_*=\beta_{ps}(\lambda_1(p))$ for all $p \leq 2q$ regardless of assumptions on $\Omega$.
Moreover, we do not know whether {\renewcommand{\betaM}{\beta_{ps}(\lambda_1(p))}\renewcommand{\alphaM}{\lambda_1(p)}\eqref{eq:D}} possesses a positive solution provided $\beta_{ps}(\lambda_1(p)) = \beta_*$, except in the case discussed in Theorem \ref{thm:nonexistence0}, cf.\ Theorem \ref{prop:betaps}.

\medskip
Theorem \ref{thm:1} in combination with the behaviour of $\E$ investigated in \cite{BobkovTanaka2017} allows us to state the following two  multiplicity results, among which 
Theorem \ref{thm:3multiple} is perhaps the most surprising since it indicates the occurrence of an $S$-shaped bifurcation diagram in the case $p>2q$, see Figure \ref{fig:1}. 
\begin{theorem}\label{thm:2multiple} 
Let $\alpha\in [\lambda_1(p), \alpha_*)$ and 
$\beta\in(\beta_*(\alpha),\beta_{ps}(\alpha))$, 
and assume \textnormal{\ref{H}} if $\alpha=\lambda_1(p)$. 
Then \eqref{eq:D} has at least two positive solutions $u_1$ and $u_2$ satisfying $\E(u_1)<0$ and $\E(u_2)<0$.
\end{theorem}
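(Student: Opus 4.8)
The plan is to tie the sign of the energy to the sign of $H_\alpha$ on the solution set and then to produce the two solutions as a local minimizer and a mountain pass critical point, both at negative energy. Testing \eqref{eq:D:weak} with $u$ itself gives $H_\alpha(u)+G_\beta(u)=0$ for every solution $u$, so that
\[
\E(u)=\Bigl(\tfrac1p-\tfrac1q\Bigr)H_\alpha(u);
\]
since $q<p$, this means $\E(u)<0$ if and only if $H_\alpha(u)>0$, and the task reduces to finding two positive solutions with $H_\alpha>0$. Two structural facts drive the argument. First, as $\beta>\beta_*(\alpha)$, the definition \eqref{def:alpha_*} supplies $w\not\equiv 0$ with $H_\alpha(w)\le 0$ and $\|\nabla w\|_q^q/\|w\|_q^q<\beta$, i.e. $G_\beta(w)<0$; along $t\mapsto tw$ one then has $\E(tw)\to-\infty$, whence $\inf_{\W}\E=-\infty$. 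Second, our functional is one member of the family $E_{\alpha,b}$, $b\in\mathbb{R}$, and at each fixed $u\not\equiv 0$ the map $b\mapsto E_{\alpha,b}(u)$ is strictly decreasing.

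\emph{First solution.} I would obtain $u_1$ as a local minimizer of $\E$ with $\E(u_1)<0$ by the constrained minimization underlying Theorem \ref{thm:1}, in the spirit of \cite{IK}. For $\beta\in(\beta_*(\alpha),\tilde\beta(\alpha)]$ this is literally Theorem \ref{thm:1}; for the whole range $(\beta_*(\alpha),\beta_{ps}(\alpha))$ one uses that $\beta<\beta_{ps}(\alpha)$ guarantees a positive solution and minimizes $\E$ on a constraint that traps competitors in the region $\{H_\alpha>0\}$. The two points to verify are that the minimizer lies in the interior of the constraint, so that it is in fact an unconstrained local minimizer, and that its energy is strictly negative, which one checks by exhibiting a single admissible function of negative energy (here $q<p$ and the negativity of $G_\beta$ in low directions are used). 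Positivity of $u_1$ follows from Remark \ref{rem:positive}, and a local minimizer in the $\C$-topology is a local minimizer in the $\W$-topology by the standard equivalence for functionals of this type.

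\emph{Second solution.} With $u_1$ a strict local minimizer satisfying $\E(u_1)<0$ and with $\inf_{\W}\E=-\infty$, the functional $\E$ has the mountain pass geometry: there is $v$ with $\E(v)<\E(u_1)$ lying beyond the ridge surrounding $u_1$. Setting
\[
c(b)=\inf_{\gamma}\ \max_{t\in[0,1]}E_{\alpha,b}(\gamma(t)),
\]
the infimum being over continuous paths $\gamma\colon[0,1]\to\W$ with $\gamma(0)=u_1$ and $\gamma(1)=v$, the value $c(\beta)$ is a critical value of $\E$ once the Palais--Smale condition holds at that level; the latter is verified in the usual way from $q<p$ and the compactness of $\W\hookrightarrow L^p,\,L^q$. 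The decisive claim is $c(\beta)<0$. I would obtain it by comparison with the threshold $b=\beta_*(\alpha)$, where $\inf_{\W}E_{\alpha,\beta_*(\alpha)}$ is finite and the higher solution sits at energy level $0$ by Theorem \ref{thm:multi} (and, in the boundary case $\alpha=\lambda_1(p)$ under \ref{H}, by Theorem \ref{thm:GM} and the analysis of \cite{BobkovTanaka2017}), so that $c(\beta_*(\alpha))=0$. Since $b\mapsto E_{\alpha,b}(u)$ is strictly decreasing for every nonzero $u$, evaluating a near-optimal path for $b=\beta_*(\alpha)$ at the larger value $b=\beta$ pushes its maximum strictly below $0$, provided that maximum is attained at a function bounded away from $0$ in $L^q$; this yields $c(\beta)<0$. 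The resulting critical point $u_2$ then satisfies $\E(u_2)=c(\beta)<0$, i.e. $H_\alpha(u_2)>0$; choosing the competing paths inside the positive cone, or replacing $u_2$ by $|u_2|$ and invoking Remark \ref{rem:positive}, makes $u_2$ positive, and $u_2\ne u_1$ by the mountain pass characterization, concretely $c(\beta)\ge\inf_{\partial B_r(u_1)}\E>\E(u_1)$ for a strict local minimizer.

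\emph{Main obstacle.} The crux is the strict negativity of $c(\beta)$ throughout $(\beta_*(\alpha),\beta_{ps}(\alpha))$, which I reduce to the identity $c(\beta_*(\alpha))=0$ together with the strict $b$-monotonicity of $E_{\alpha,b}$; the subtle step is to guarantee that the maximum along near-optimal paths is attained at functions bounded away from $0$ in $L^q$, so that decreasing $b$ genuinely lowers the level below $0$ (one may restrict the paths to a sphere $\|u\|_q=\mathrm{const}$, or to the Nehari set, to enforce this). A secondary difficulty is to secure the first solution as a genuine unconstrained local minimizer on the entire interval up to $\beta_{ps}(\alpha)$, not merely on $(\beta_*(\alpha),\tilde\beta(\alpha)]$, and to verify the Palais--Smale condition at the relevant negative levels.
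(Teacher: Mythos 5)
Your overall architecture (a local minimizer at negative energy plus a mountain pass point, with the sign reduction $\E(u)=\bigl(\tfrac1p-\tfrac1q\bigr)H_\alpha(u)$ on the solution set) matches the paper, but both of the steps you flag as ``obstacles'' are genuine gaps, and in each case the paper resolves them by a mechanism absent from your proposal. For the first solution, extending the constrained minimization of Theorem \ref{thm:1} to all of $(\beta_*(\alpha),\beta_{ps}(\alpha))$ does not work: the entire point of $\tilde{\beta}(\alpha)$ is that beyond it one can no longer exclude that the minimizer of $\mathcal{J}(\beta,\mu)$ sits on the constraint boundary $G_\mu=0$, in which case it is not an unconstrained local minimizer; you give no replacement constraint that ``traps competitors in $\{H_\alpha>0\}$'' on the full range. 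The paper instead takes a positive solution $w$ of \eqref{eq:D} with parameter $\beta'\in(\beta,\beta_{ps}(\alpha)]$ as a \emph{strict supersolution}, obtains $u_1$ as a global minimizer of the truncated functional $\E^{[0,w]}$ (Lemma \ref{lem:ss-method}), proves via a uniform-ellipticity comparison argument that $u_1$ lies in the open order interval $(0,w)$ so that it is a $C^1_0$-local minimizer of $\Ep$ (Lemma \ref{lem:minc}), and then upgrades to a $\W$-local minimizer by Theorem \ref{thm:minimizer}. This is an entirely different route and it is what makes the result hold up to $\beta_{ps}(\alpha)$.

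For the second solution, your argument that $c(\beta)<0$ is not sound as stated. The identity $c(\beta_*(\alpha))=0$ is asserted, not proved: Theorem \ref{thm:multi} produces a solution at energy level $0$ for $\beta=\beta_*(\alpha)$, but that does not identify the mountain pass level of any particular pair of endpoints, and in any case the endpoints $u_1$ and $v$ of your path class depend on $\beta$, so the comparison between $c(\beta_*(\alpha))$ and $c(\beta)$ is between minimax values over \emph{different} classes. The monotonicity step also requires a uniform positive lower bound on $\|\gamma(s)\|_q$ along near-optimal paths, which you do not establish; restricting paths to an $L^q$-sphere or to $\N$ changes the minimax class and forfeits the standard deformation argument. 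The paper avoids all of this by exhibiting a single explicit admissible path with $\max_s\Ep(\eta(s))<0$ (Lemma \ref{lem:construct-path}): one joins $u_1$ to a point $v_1\in\N$ of lower energy (available since $\inf_{\N}\E=-\infty$ for $\beta>\beta_*(\alpha)$) by $\xi(s)=\bigl((1-s)u_1^q+sv_1^q\bigr)^{1/q}$, uses the hidden convexity of the $q$-Dirichlet energy to keep $G_\beta(\xi(s))<0$ along the whole path, and then projects onto $\N$ via $t_{\alpha,\beta}(\xi(s))$ so that the energy equals $\J(\xi(s))<0$ pointwise (handling separately the case where $H_\alpha$ vanishes along $\xi$, where $\J\to-\infty$). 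This explicit construction is the key missing idea; without it, the strict negativity of the mountain pass level --- and hence $H_\alpha(u_2)>0$ and $\E(u_2)<0$ --- is not secured.
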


\begin{theorem}\label{thm:3multiple}
Assume \textnormal{\ref{H}}. 
Then for every $\beta\in (\beta_*,\beta_{ps}(\lambda_1(p))]$ 
there exists $\alpha_*(\beta) \in (0,\lambda_1(p))$ such that 
\eqref{eq:D} has at least three positive solutions 
for any $\alpha\in (\alpha_*(\beta),\lambda_1(p))$. 
\end{theorem}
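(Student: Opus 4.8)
The plan is to realise the three solutions as a global minimiser, a secondary local minimiser obtained by continuation from the critical line $\alpha=\lambda_1(p)$, and a mountain pass point trapped between them. Fix $\beta\in(\beta_*,\beta_{ps}(\lambda_1(p))]$ and observe first that for $\alpha<\lambda_1(p)$ the functional $\E$ is coercive on $\W$: one has $H_\alpha(u)\ge(1-\alpha/\lambda_1(p))\|\nabla u\|_p^p$, while $|G_\beta(u)|\le C\|\nabla u\|_p^q$ on a bounded domain, and since $q<p$ the $p$-homogeneous part dominates. Hence $\E$ attains its infimum, and by Proposition \ref{prop:uniq} this global minimiser $u_1$ is positive with $\E(u_1)<0$. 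This is the first solution.

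For the second solution I would exploit the exact identity
\[
\E(u)=E_{\lambda_1(p),\beta}(u)+\frac{\lambda_1(p)-\alpha}{p}\,\|u\|_p^p ,
\]
which for $\alpha<\lambda_1(p)$ gives $\E\ge E_{\lambda_1(p),\beta}$ pointwise, the difference being uniformly small on bounded subsets of $\W$ as $\alpha\uparrow\lambda_1(p)$. Under \ref{H}, Theorem \ref{thm:1} applied at $\alpha=\lambda_1(p)$ (together with the behaviour of $E_{\lambda_1(p),\beta}$ from \cite{BobkovTanaka2017}) furnishes a positive local minimiser $\hat u$ of $E_{\lambda_1(p),\beta}$ with $E_{\lambda_1(p),\beta}(\hat u)<0$, which I would take to be strict, enclosed by a sphere $\partial B(\hat u,r)$ on which $E_{\lambda_1(p),\beta}$ exceeds $E_{\lambda_1(p),\beta}(\hat u)$ by a fixed amount $2\eta>0$. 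Since $\E\ge E_{\lambda_1(p),\beta}$ forces $\inf_{\partial B(\hat u,r)}\E\ge E_{\lambda_1(p),\beta}(\hat u)+2\eta$, whereas $\E(\hat u)\le E_{\lambda_1(p),\beta}(\hat u)+\eta$ once $\alpha$ is close enough to $\lambda_1(p)$, the minimum of $\E$ over $\overline{B(\hat u,r)}$ is attained in the interior. This yields a local minimiser $u_2\in B(\hat u,r)$ with $\E(u_2)<0$, positive by Remark \ref{rem:positive}. The threshold $\alpha_*(\beta)\in(0,\lambda_1(p))$ is chosen so that this persistence holds for all $\alpha\in(\alpha_*(\beta),\lambda_1(p))$.

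To separate the two minimisers I would compare energies along $t\varphi_p$. Because $H_\alpha(\varphi_p)=(\lambda_1(p)-\alpha)/\lambda_1(p)\to0$ while $G_\beta(\varphi_p)<0$ is fixed by $\beta>\beta_*$, the minimising $t$ blows up and $\inf_{t>0}\E(t\varphi_p)\to-\infty$ as $\alpha\uparrow\lambda_1(p)$, so $\E(u_1)\to-\infty$; meanwhile $\E(u_2)\to E_{\lambda_1(p),\beta}(\hat u)$ stays bounded. Shrinking $\alpha_*(\beta)$ if needed gives $\E(u_1)<\E(u_2)<0$, so $u_1\ne u_2$ and $u_2$ is not global. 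I would then invoke the mountain pass theorem between the strict local minimum $u_2$ and the far point $u_1$: coercivity of $\E$ for $\alpha<\lambda_1(p)$ yields the Palais--Smale condition, and the geometry $\max(\E(u_1),\E(u_2))=\E(u_2)<\inf_{\partial B(u_2,\rho)}\E$, with $u_1$ outside the ball, produces a third critical point $u_3$ at level $c>\E(u_2)$, hence distinct from $u_1,u_2$; positivity follows by running the min--max in the positive cone and applying Remark \ref{rem:positive}. To match the picture of Figure \ref{fig:1} one additionally checks $c<0$ by exhibiting a connecting path inside the open sublevel set $\{\E<0\}$.

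The main obstacle is the second step: guaranteeing that $\hat u$ is a genuinely strict local minimiser with a quantified well (the uniform gap $2\eta$ on $\partial B(\hat u,r)$), and that such an anchor is available for the whole range $\beta\in(\beta_*,\beta_{ps}(\lambda_1(p))]$ rather than only up to the threshold $\tilde\beta(\lambda_1(p))$ supplied by Theorem \ref{thm:1}. This is precisely where $p>2q$ is indispensable: by Theorem \ref{thm:GM}\ref{thm:GM:1} the corner functional $E_{\lambda_1(p),\beta_*}$ has a negative, attained infimum, whereas for $p\le 2q$ it does not, so the local-minimum anchor — and with it the third solution — cannot exist, consistently with the two-solution diagram of Figure \ref{fig:2}. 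The remaining points (verifying Palais--Smale, positivity of the mountain pass solution, and the path giving $c<0$) are comparatively routine.
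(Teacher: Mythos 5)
Your overall architecture (a global minimiser whose energy blows down to $-\infty$ as $\alpha\nearrow\lambda_1(p)$, a second local minimiser with uniformly bounded energy, and a mountain pass point between them) is exactly the paper's, and your computation along $t\varphi_p$ showing $\inf_{t>0}\E(t\varphi_p)\to-\infty$ is correct. But the step you yourself flag as ``the main obstacle'' is a genuine gap, on two counts. First, Theorem \ref{thm:1} at $\alpha=\lambda_1(p)$ only supplies a local minimiser of $E_{\lambda_1(p),\beta}$ for $\beta\le\tilde\beta(\lambda_1(p))$, whereas the statement requires the whole range $\beta\in(\beta_*,\beta_{ps}(\lambda_1(p))]$, and nothing in the paper guarantees $\tilde\beta(\lambda_1(p))=\beta_{ps}(\lambda_1(p))$. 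Second, even where such a $\hat u$ exists, it is only a (not necessarily strict) local minimiser in the $\W$-topology; your persistence argument needs the quantified well $\inf_{\partial B(\hat u,r)}E_{\lambda_1(p),\beta}\ge E_{\lambda_1(p),\beta}(\hat u)+2\eta$, which a non-strict local minimum does not provide. Without that gap your ``minimise $\E$ over $\overline{B(\hat u,r)}$'' step can return a point on the boundary, and the whole chain collapses.

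The paper closes this hole with a different mechanism: for any $\beta\in(\beta_*,\beta_{ps}(\lambda_1(p))]$, Theorem \ref{thm:summary} gives a positive solution $w$ of the problem at $\alpha=\lambda_1(p)$, which is a \emph{strict} supersolution of \eqref{eq:D} for every $\alpha\in[0,\lambda_1(p))$. Minimising the truncated functional $\E^{[0,w]}$ (Lemma \ref{lem:ss-method}) produces a solution $u_1(\alpha)\in(0,w)$, Lemma \ref{lem:minc} shows it is a local minimum of $\Ep$ in the $C^1_0(\overline\Omega)$-topology, and Theorem \ref{thm:minimizer} upgrades this to a $\W$-local minimum. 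The ordering $0<u_1(\alpha)<w$ gives the uniform $L^\infty$-bound, hence the uniform lower energy bound \eqref{eq:3multi-00}, for \emph{all} $\alpha\in[0,\lambda_1(p))$ and all admissible $\beta$ at once --- no quantified well is needed. For the third solution the paper then invokes the Pucci--Serrin generalized mountain pass theorem via Theorem \ref{thm:MPsol}, which tolerates a non-strict local minimum, and secures the level $c<0$ by the explicit path of Lemma \ref{lem:construct-path} built from the hidden convexity of $s\mapsto((1-s)u_1^q+sv_1^q)^{1/q}$; your closing remarks treat both of these points (strictness for the mountain pass geometry, and the path inside $\{\E<0\}$) as routine, but they each require a specific device. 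Finally, note that $\alpha_*(\beta)>0$ (not merely $\ge 0$) follows from the uniqueness of the positive solution for $\alpha\le 0$ in Proposition \ref{prop:uniq}, a point your write-up omits.
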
 

\begin{remark}
	All three solutions obtained in Theorem \ref{thm:3multiple} have negative energy, see Proposition \ref{prop:uniq}.
\end{remark}

Let us recall that if $\alpha \leq 0$, then the positive solution of \eqref{eq:D} is unique (see Proposition \ref{prop:uniq}), and it was unclear (see \cite[Remark 1]{BobkovTanaka2017}) whether a difficulty to extend the uniqueness to $\alpha \in (0, \lambda_1(p))$ lies only in the limitation of the method of the original proof, or a multiplicity of positive solutions can actually occur. Theorem \ref{thm:3multiple} answers this question in a nontrivial way.
We emphasize that this multiplicity result does not depend on the domain, as it happens, e.g., in the case of superlinear problems of the type $-\Delta_q u = |u|^{p-2} u$, cf.~\cite{Nazarov}. Moreover, there is no simple \textit{a priori} intuition about such multiplicity based on the behaviour of fiber functions of $\E$, since these functions have at most one critical point which is the point of global minimum, see Section \ref{sec:auxiliaryI}.
Finally, let us mention that the $S$-shaped bifurcation diagram indicated by Theorem \ref{thm:3multiple} clarifies the shape of the bifurcation diagrams (A) or (B) in \cite{KTT} obtained for the one-dimensional version of {\renewcommand{\betaM}{\lambda}\renewcommand{\alphaM}{\lambda}\eqref{eq:D}}.
See Figure \ref{fig:3} for some numerical results in the one-dimensional case.

\begin{remark}
	Properties of the family of critical points $\alpha_*(\beta)$, such as the continuity, monotonicity, etc., are mostly unknown. 
	We anticipate that the set of parameters $\alpha$, $\beta$ corresponding to the existence of three positive solutions of \eqref{eq:D} can be extended to a larger region as depicted by the dashed line on Figure \ref{fig:1}.
\end{remark}

\begin{figure}[ht]
	\centering
	\includegraphics[width=0.5\linewidth]{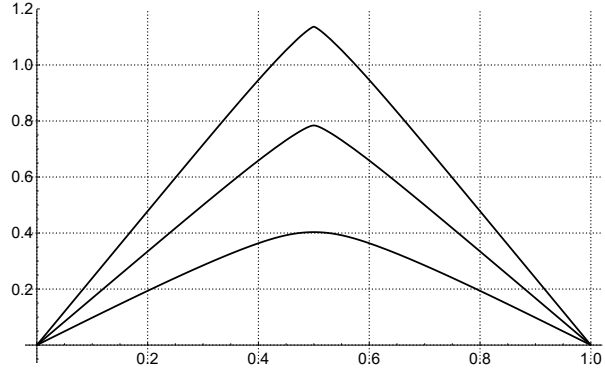}\\
	\caption{
		Three positive solutions of the one-dimensional problem \eqref{eq:D} on the interval $(0,1)$ 
		with $q=2$, $p=6$, $\alpha=\lambda_1(p)-0.1$ and $\beta=\beta_*+0.1$, found by the shooting method; consult with \cite[Appendix A]{BobkovTanaka2016} for explicit formulas for $\lambda_1(p)$ and $\beta_*$.
	}
	\label{fig:3}
\end{figure}

\medskip
The rest of the article is structured as follows.
In Section \ref{sec:auxiliaryI}, we introduce a few additional notations and provide an auxiliary lemma needed for the proof of Theorem \ref{thm:1} which we establish in Section \ref{sec:proof:Thm1}. 
Section \ref{sec:mountainpass} provides auxiliary results needed to prove Theorems \ref{thm:2multiple} and \ref{thm:3multiple}. These theorems are established in Section \ref{sec:proof:multiplicity}. 
Appendix \ref{appendix:WvcC} contains a ``$\W$ versus $C_0^1$ local minimizers''-type result for general problems with the $(p,q)$-Laplacian, which we also apply in Section \ref{sec:proof:multiplicity}.


\section{Auxiliary results I. The fibered functional \texorpdfstring{$\J$}{Jab}}\label{sec:auxiliaryI}
Take any $u \in \W$ satisfying $H_\alpha(u) \cdot G_\beta(u) < 0$ and consider the fiber function $t \mapsto \E(tu)$ for $t > 0$. It is not hard to observe that this function has a unique critical point $t_{\alpha,\beta}(u)$ given by 
\begin{equation}\label{tu}
t_{\alpha,\beta}(u) 
= \left(\frac{-G_\beta(u)}{H_\alpha(u)}\right)^{\frac{1}{p-q}} = 
\frac{|G_\beta(u)|^{\frac{1}{p-q}}}{|H_\alpha(u)|^{\frac{1}{p-q}}},
\end{equation}
see \cite[Proposition 6]{BobkovTanaka2015}.
Moreover, 
\begin{equation}\label{def:J}
\J(u) := \E(t_{\alpha,\beta}(u) u) = -\text{sign}(H_\alpha(u))\, 
\frac{p-q}{pq}\, \frac{|G_\beta(u)|^{\frac{p}{p-q}}}{|H_\alpha(u)|^{\frac{q}{p-q}}}. 
\end{equation}
In particular, if $G_\beta(u)<0<H_\alpha(u)$, then $t_{\alpha,\beta}(u)$ is the point of global minimum of the function $t \mapsto \E(tu)$, i.e.,  
\begin{equation}\label{eq:minivalue}
\min_{t>0}\E(tu) 
=
\E(t_{\alpha,\beta}(u)u)
\equiv 
\J(u)=-\frac{p-q}{pq}\, \frac{|G_\beta(u)|^{\frac{p}{p-q}}}{H_\alpha(u)^{\frac{q}{p-q}}}<0. 
\end{equation}
The functional $\J$ is called fibered functional \cite{pohozaev}, it is $0$-homogeneous, and if $u$ is a critical point of $\J$ satisfying $H_\alpha(u) \cdot G_\beta(u) < 0$, then $t_{\alpha,\beta}(u)u$ is a critical point of $\E$.

By $\N$ we denote the Nehari manifold associated to $\E$, that is,
\begin{equation*}\label{def:Nehari} 
\N = 
\left\{ 
v\in\W\setminus\{0\}:~
\langle \E^\prime(v),v \rangle=H_\alpha(v)+G_\beta(v)=0
\right\}.
\end{equation*}
Clearly, this set contains all nonzero critical points of $\E$. 
Notice that if we take any $u \in \W$ satisfying $H_\alpha(u) \cdot G_\beta(u) < 0$, then $t_{\alpha,\beta}(u) u \in \N$, see \cite[Proposition 10]{BobkovTanaka2017}. 

The following auxiliary result will be used in the proof of Theorem \ref{thm:1} in Section \ref{sec:proof:Thm1} below.
\begin{lemma}\label{lem:conv} 
	Let $\alpha \in [\lambda_1(p),\alpha_*)$, $\{\beta_n\} \subset [\beta_*(\alpha),+\infty)$ be a sequence converging to $\beta \geq \beta_*(\alpha)$, and $\mu \in (\lambda_1(q),\beta_*(\alpha))$. 
	Assume that a sequence $\{w_n\} \subset \W$ 
	satisfies $\|\nabla w_n\|_p=1$ for all $n \in \mathbb{N}$, and let $w_0 \in \W$ be such that $\{w_n\}$
	converges weakly in $\W$ 
	and strongly in $L^p(\Omega)$ to $w_0$ as $n \to +\infty$. 
	Assume, moreover, that 
	\begin{equation}\label{eq:assumseq}
	G_\mu(w_n) \leq 0 < H_\alpha(w_n)  
	\quad \text{for all}~ n \in \mathbb{N}, 
	\quad \text{and}\quad 
	-\infty \leq \liminf_{n \to +\infty}J_{\alpha,\beta_n}(w_n)<0. 
	\end{equation}
	Then $w_0 \not\equiv 0$ in $\Omega$, and we have
	$$
	G_\beta(w_0) < G_\mu(w_0) \leq 0 < H_\alpha(w_0)
	\quad \text{and}\quad 
	-\infty < \J(w_0)\le \liminf_{n\to +\infty}J_{\alpha,\beta_n}(w_n). 
	$$
\end{lemma}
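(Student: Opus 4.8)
The plan is to combine the normalisation $\|\nabla w_n\|_p=1$, the sign conditions in \eqref{eq:assumseq}, and the strict ordering $\lambda_1(q)<\mu<\beta_*(\alpha)\le\beta_n$ with the convergence properties of $\{w_n\}$. First I would record the routine consequences of the hypotheses. Since $\{w_n\}$ is bounded in $\W$, it is bounded in $W_0^{1,q}$, so by the Rellich--Kondrachov theorem (and since the limit is pinned down by the given $L^p$ convergence) the \emph{whole} sequence satisfies $w_n\rightharpoonup w_0$ in $W_0^{1,q}$ and $w_n\to w_0$ in $L^q(\Omega)$; in particular $\|w_n\|_p^p\to\|w_0\|_p^p$ and $\|w_n\|_q^q\to\|w_0\|_q^q$. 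From $\|\nabla w_n\|_p=1$ we get $H_\alpha(w_n)=1-\alpha\|w_n\|_p^p$, and from $\beta_n\ge\beta_*(\alpha)>\mu$ we get $G_{\beta_n}(w_n)\le G_\mu(w_n)\le 0$, so the representation \eqref{def:J} applies with $\mathrm{sign}(H_\alpha(w_n))=+1$.

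Next I would prove $w_0\not\equiv 0$ by contradiction. If $w_0\equiv 0$, then $\|w_n\|_p\to 0$ and $\|w_n\|_q\to 0$, hence $H_\alpha(w_n)\to 1$; moreover the admissibility inequality $\|\nabla w_n\|_q^q\le\beta_n\|w_n\|_q^q\to 0$ forces $G_{\beta_n}(w_n)\to 0$. Substituting into \eqref{def:J} gives $J_{\alpha,\beta_n}(w_n)\to 0$, which contradicts $\liminf_n J_{\alpha,\beta_n}(w_n)<0$. Therefore $w_0\neq 0$, and in particular $\|w_0\|_q>0$.

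Then I would establish the chain $G_\beta(w_0)<G_\mu(w_0)\le 0<H_\alpha(w_0)$. The strict inequality $G_\beta(w_0)<G_\mu(w_0)$ is immediate from $\beta>\mu$ and $\|w_0\|_q>0$, while $G_\mu(w_0)\le 0$ follows by passing to the limit in $\|\nabla w_n\|_q^q\le\mu\|w_n\|_q^q$ using weak lower semicontinuity of $u\mapsto\|\nabla u\|_q^q$ and the strong $L^q$ convergence. The delicate point, and what I expect to be the main obstacle, is the strict positivity $H_\alpha(w_0)>0$: weak lower semicontinuity only supplies the \emph{upper} bound $H_\alpha(w_0)\le 1-\alpha\|w_0\|_p^p$, not the lower bound we need. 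I would argue by contradiction: if $H_\alpha(w_0)\le 0$, then $w_0$ is admissible in the minimisation problem \eqref{def:alpha_*} defining $\beta_*(\alpha)$, so $\|\nabla w_0\|_q^q/\|w_0\|_q^q\ge\beta_*(\alpha)$; but $G_\mu(w_0)\le 0$ gives $\|\nabla w_0\|_q^q/\|w_0\|_q^q\le\mu$, whence $\beta_*(\alpha)\le\mu$, contradicting $\mu<\beta_*(\alpha)$. Hence $H_\alpha(w_0)>0$, so $H_\alpha(w_0)\cdot G_\beta(w_0)<0$ and $J_{\alpha,\beta}(w_0)$ is well defined and finite, i.e.\ $J_{\alpha,\beta}(w_0)>-\infty$.

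Finally, I would prove $J_{\alpha,\beta}(w_0)\le\liminf_n J_{\alpha,\beta_n}(w_n)$ by rewriting it as an upper bound on $\limsup_n\bigl(-J_{\alpha,\beta_n}(w_n)\bigr)$, that is, on $\limsup_n |G_{\beta_n}(w_n)|^{p/(p-q)}/H_\alpha(w_n)^{q/(p-q)}$. Writing $|G_{\beta_n}(w_n)|=\beta_n\|w_n\|_q^q-\|\nabla w_n\|_q^q$ and combining $\beta_n\|w_n\|_q^q\to\beta\|w_0\|_q^q$ with $\liminf_n\|\nabla w_n\|_q^q\ge\|\nabla w_0\|_q^q$ yields $\limsup_n|G_{\beta_n}(w_n)|\le|G_\beta(w_0)|$. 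Since $H_\alpha(w_n)=1-\alpha\|w_n\|_p^p\to 1-\alpha\|w_0\|_p^p\ge H_\alpha(w_0)>0$, the denominator converges to a strictly positive limit, so the limsup of the quotient equals the limsup of the numerator divided by that limit; monotonicity of the power functions on $[0,\infty)$ together with $1-\alpha\|w_0\|_p^p\ge H_\alpha(w_0)$ then gives $\limsup_n\bigl(-J_{\alpha,\beta_n}(w_n)\bigr)\le -J_{\alpha,\beta}(w_0)$, which is the claimed inequality. The only care needed in this last step is that the limsup/liminf manipulations are justified precisely because the denominator has a strictly positive limit and the relevant power functions are continuous and monotone.
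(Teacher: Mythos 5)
Your proposal is correct and follows essentially the same route as the paper's proof: the identical contradiction argument for $w_0 \not\equiv 0$, the identical use of the definition \eqref{def:alpha_*} of $\beta_*(\alpha)$ to force $H_\alpha(w_0)>0$, and the same weak lower semicontinuity estimates for $G$ and $H$ in the final liminf inequality (which you merely spell out in more explicit limsup/liminf detail than the paper does). No substantive differences.
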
 
\begin{proof} 
	First we show that $w_0 \not\equiv 0$ in $\Omega$.
	Suppose, by contradiction, that $w_0 \equiv 0$ in $\Omega$. 
	That is, $\|w_n\|_p \to 0$ and $\|w_n\|_q \to 0$ as $n \to +\infty$.
	In particular, we have $H_\alpha(w_n) = 1 - o(1)$. 
	Moreover, since $G_{\mu}(w_n) \leq 0$, we see that $\|\nabla w_n\|_q \to 0$, which yields $G_{\beta_n}(w_n) \to 0$, and, consequently, $J_{\alpha,\beta_n}(w_n) \to 0$. 
	However, this is a contradiction to \eqref{eq:assumseq}, and hence $w_0 \not\equiv 0$ in $\Omega$. 
	
	By the weak lower semicontinuity, we readily get
	$$
	G_{\mu}(w_0) \leq \liminf_{n \to +\infty} G_{\mu}(w_n) \leq 0,
	$$
	which implies that
	$$
	\frac{\|\nabla w_0\|_q^q}{\|w_0\|_q^q} \leq \mu < \beta_*(\alpha). 
	$$
	Due to the definition \eqref{def:alpha_*} of $\beta_*(\alpha)$, we conclude that $H_\alpha(w_0)>0$.
	On the other hand, by our assumptions, we have $\beta > \mu$ and $\beta_n > \mu$ for all $n$, which implies that $G_\beta(w_0)<0$ and $G_{\beta_n}(w_n)<0$ for all $n$.
	Therefore, the weak lower semicontinuity of $G_{\beta_n}$ and $H_\alpha$ yields 
	$$
	-\infty < \J(w_0)\le \liminf_{n\to +\infty}J_{\alpha,\beta_n}(w_n),
	$$
	which completes the proof.
\end{proof}

\section{Beyond \texorpdfstring{$\beta_*(\alpha)$}{beta*(alpha)}. The proof of Theorem \ref{thm:1}}\label{sec:proof:Thm1} 
In this section, we prove Theorem \ref{thm:1}.
Throughout the section, we assume $\alpha\in [\lambda_1(p), \alpha_*)$ to be \textit{fixed}, and we require hypothesis \ref{H} if $\alpha=\lambda_1(p)$. 

The proof of Theorem \ref{thm:1} will rely on the consideration of the following minimization problem:
	\begin{equation}\label{eq:minimization}
	\mathcal{J}(\beta,\mu) 
	:=
	\inf
	\left\{\J(u):~ u \in \W,\ ~ G_{\mu}(u) < 0 < H_\alpha(u)
	\right\}, 
	\end{equation}
where we assume $\beta \ge \beta_*(\alpha)$ and $\mu \in (\lambda_1(q),\beta_*(\alpha)]$, and $\J$ is the fibered functional defined by \eqref{def:J}. 
Notice that the index of $G_\beta$ presented in $\J$ is, in general, different from the index of $G_\mu$ presented in the constraint. 
To the best of our knowledge, the idea of introduction of such constraints was originated in the work \cite{IK}.

Let us discuss several general properties of \eqref{eq:minimization}.
The admissible set for $\mathcal{J}(\beta,\mu)$ 
is nonempty because $\mu > \lambda_1(q)$ and $\alpha < \alpha_*$ yield $G_\mu(\varphi_q)<0<H_\alpha(\varphi_q)$. 
Consequently, we always have
\begin{equation}\label{eq:conv1} 
\mathcal{J}(\beta,\mu)\le \J(\varphi_q)<0,
\end{equation} 
since $\beta \geq \beta_*(\alpha) > \lambda_1(q)$.
If we let $\beta=\mu$, then
$\mathcal{J}(\beta,\beta)$ translates to the usual minimization problem of finding the {\it least energy solution} to $(D_{\alpha,\beta})$, see, e.g., \cite{BobkovTanaka2015,BobkovTanaka2017}. 
In particular, $\mathcal{J}(\beta_*(\alpha),\beta_*(\alpha))$ is attained, and if $u_*$ is a corresponding minimizer, 
then $t_{\alpha,\beta}(u_*)u_*$ is a solution 
of {\renewcommand{\betaM}{\beta_*(\alpha)}\eqref{eq:D}}, see Theorem \ref{thm:multi} in the case $\lambda_1(p)<\alpha<\alpha_*$ and Theorem \ref{thm:GM} in the case $\alpha=\lambda_1(p)$. 
Let us define
\begin{equation}\label{eq:mu0}
\mu_0 = \mu_0(\alpha) 
:=
\sup
\left\{ 
\frac{\|\nabla u_*\|_q^q}{\|u_*\|_q^q}:~
u_* ~\text{is a minimizer of}~ 
\mathcal{J}(\beta_*(\alpha),\beta_*(\alpha))
\right\}.
\end{equation}
\begin{proposition}\label{prop:2} 
	$\mu_0 < \beta_*(\alpha)$. 
\end{proposition}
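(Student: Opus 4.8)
The plan is to argue by contradiction. Every minimizer $u_*$ of $\mathcal{J}(\beta_*(\alpha),\beta_*(\alpha))$ satisfies $G_{\beta_*(\alpha)}(u_*)<0$, hence $\|\nabla u_*\|_q^q/\|u_*\|_q^q<\beta_*(\alpha)$, so a priori $\mu_0\le\beta_*(\alpha)$ and it suffices to exclude the equality $\mu_0=\beta_*(\alpha)$. Assuming it, I would choose minimizers $u_n$ with $\|\nabla u_n\|_q^q/\|u_n\|_q^q\to\beta_*(\alpha)$; using the $0$-homogeneity of $J_{\alpha,\beta_*(\alpha)}$ and of the $q$-quotient, and replacing $u_n$ by $|u_n|$, we may assume $u_n\ge0$ and $\|\nabla u_n\|_p=1$. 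Set $g_n:=-G_{\beta_*(\alpha)}(u_n)>0$ and $h_n:=H_\alpha(u_n)>0$. Since $\{u_n\}$ is bounded in $L^q(\Omega)$, the convergence of the $q$-quotient forces $g_n\to0$; and as $J_{\alpha,\beta_*(\alpha)}(u_n)\equiv\mathcal{J}(\beta_*(\alpha),\beta_*(\alpha))=:c<0$ is constant, formula \eqref{def:J} then forces $h_n\to0$ and $t_n:=t_{\alpha,\beta_*(\alpha)}(u_n)=(g_n/h_n)^{1/(p-q)}\to+\infty$.

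The next step is a blow-up. Being an interior minimizer, each $u_n$ is a critical point of $J_{\alpha,\beta_*(\alpha)}$, so $v_n:=t_n u_n$ solves \eqref{eq:D} with $\beta=\beta_*(\alpha)$; dividing this equation by $t_n^{p-1}$ gives
\begin{equation*}
-\Delta_p u_n-t_n^{q-p}\Delta_q u_n=\alpha\,u_n^{p-1}+\beta_*(\alpha)\,t_n^{q-p}u_n^{q-1}.
\end{equation*}
Passing to a subsequence with $u_n\rightharpoonup u_0$ in $\W$ and $u_n\to u_0$ in $L^p(\Omega)\cap L^q(\Omega)$, and testing this equation with $u_n-u_0$, every term except $\langle-\Delta_p u_n,u_n-u_0\rangle$ tends to $0$, the $q$-Laplacian contributions due to the vanishing factor $t_n^{q-p}$. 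The $(S_+)$ property of $-\Delta_p$ then yields $u_n\to u_0$ strongly in $\W$, whence $\|\nabla u_0\|_p=1$ and, in the limit, $-\Delta_p u_0=\alpha\,u_0^{p-1}$ with $u_0\ge0$, $u_0\not\equiv0$. Thus $u_0$ is a nonnegative eigenfunction of the $p$-Laplacian for the eigenvalue $\alpha$. When $\alpha>\lambda_1(p)$ this is already a contradiction, since $\lambda_1(p)$ is the only eigenvalue admitting a nonnegative eigenfunction.

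It remains to treat $\alpha=\lambda_1(p)$ under \ref{H}, which I expect to be the main obstacle: now $u_0=\varphi_p$ is forced and the leading order is inconclusive, so a quantitative comparison is required. Since $v_n$ is a least energy solution it lies on the Nehari manifold and has energy $c$, giving the exact identities $H_{\lambda_1(p)}(v_n)=-G_{\beta_*}(v_n)=\tfrac{pq|c|}{p-q}$; by $p$- and $q$-homogeneity these rescale to
\begin{equation*}
H_{\lambda_1(p)}(u_n)=\frac{pq|c|}{p-q}\,t_n^{-p},
\qquad
G_{\beta_*}(u_n)=-\frac{pq|c|}{p-q}\,t_n^{-q}.
\end{equation*}
Writing $d_n:=\mathrm{dist}_{\W}(u_n,\mathbb{R}\varphi_p)$, the fact that $G_{\beta_*}$ is $C^1$ and vanishes identically on $\mathbb{R}\varphi_p$ gives the Lipschitz bound $g_n=|G_{\beta_*}(u_n)|\le C\,d_n$, while the improved Poincar\'e inequality of \cite{takac}---available precisely because $p>2q\ge2$ and, for $N\ge2$, $\partial\Omega$ is connected---provides a quadratic-type lower bound $H_{\lambda_1(p)}(u_n)\gtrsim d_n^2$. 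Combined with the identities above this forces $t_n^{-p}\asymp H_{\lambda_1(p)}(u_n)\gtrsim d_n^2\gtrsim g_n^2\asymp t_n^{-2q}$, that is $t_n^{2q-p}\gtrsim1$, which is impossible for $p>2q$ as $t_n\to+\infty$. The delicate point, and the only place where \ref{H} is used, is this last comparison: for $p>2$ the functional $H_{\lambda_1(p)}$ degenerates near $\varphi_p$, so the quadratic lower bound is not the naive one and must be extracted from \cite{takac} in a norm compatible with the Lipschitz control of $G_{\beta_*}$.
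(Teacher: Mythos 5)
Your proposal follows essentially the same route as the paper: argue by contradiction with a sequence of minimizers normalized by $\|\nabla u_n\|_p=1$, deduce $G_{\beta_*(\alpha)}(u_n)\to 0$, $H_\alpha(u_n)\to 0$ and $t_n\to+\infty$, pass to a nonnegative limit which must be a first eigenfunction so that $\alpha=\lambda_1(p)$ is the only nontrivial case (the paper cites \cite[Lemma 3.3]{BobkovTanaka2016} for this convergence, where you reprove it via an $(S_+)$ argument), and then invoke the improved Poincar\'e inequality of \cite{takac} under \ref{H} to reach the final contradiction. The only caveat is the step you yourself flag as delicate: \cite{takac} yields $H_{\lambda_1(p)}(u_n)\gtrsim \int_\Omega|\nabla\varphi_p|^{p-2}|\nabla v_n|^2\,dx+\|\nabla v_n\|_p^p$ rather than a quadratic lower bound in the $\W$-distance, so the companion Lipschitz estimate for $G_{\beta_*}$ must be established in that weighted norm --- this is precisely the computation the paper delegates to \cite[pp.\ 1233--1234]{BobkovTanaka2017}, where the conclusion is phrased equivalently as $J_{\lambda_1(p),\beta_*}(u_n)\to 0$ instead of your $t_n^{\,2q-p}\gtrsim 1$.
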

\begin{proof} 
	It is clear that $\mu_0 \leq \beta_*(\alpha)$, since otherwise $G_{\beta_*(\alpha)}(u_*)>0$ for some minimizer $u_*$ of $\mathcal{J}(\beta_*(\alpha),\beta_*(\alpha))$, which is impossible, see \eqref{eq:minimization} with $\beta=\mu=\beta_*(\alpha)$. 
	Suppose, contrary to our claim, that $\mu_0 = \beta_*(\alpha)$. 
	That is, there exists a sequence of minimizers $\{u_k\}$ of $\mathcal{J}(\beta_*(\alpha),\beta_*(\alpha))$ such that $\frac{\|\nabla u_k\|_q^q}{\|u_k\|_q^q} \to \beta_*(\alpha)$. 
	Since $\J$ is 0-homogeneous, we may assume, without loss of generality, that $\|\nabla u_k\|_p=1$ for each $k$.
	Thus, the latter convergence yields $G_{\beta_*(\alpha)}(u_k) \to 0$.
	On the other hand, since $J_{\alpha,\beta_*(\alpha)}(u_k) = \mathcal{J}(\beta_*(\alpha),\beta_*(\alpha)) < 0$ by \eqref{eq:conv1}, 
we get from \eqref{eq:minivalue} that
	\begin{equation}\label{eq:m0<b}
	H_\alpha(u_k) = \left(\frac{p-q}{pq}\right)^\frac{p-q}{q} \frac{|G_{\beta_*(\alpha)}(u_k)|^\frac{p}{q} }{\left(-\mathcal{J}(\beta_*(\alpha),\beta_*(\alpha))\right)^\frac{p-q}{q}}
	\quad \text{for all}~ k \in \mathbb{N}.
	\end{equation}
	Substituting \eqref{eq:m0<b} into \eqref{tu}, we deduce, in view of the default assumption $p>q$, that 	$t_{\alpha,\beta_*(\alpha)}(u_k) \to +\infty$. 
	Moreover, by considering $|u_k|$ if necessary, we may assume that $u_k\ge 0$ in $\Omega$ for all $k$. 
	Recall now that $t_{\alpha,\beta_*(\alpha)}(u_k)u_k$ is a solution of {\renewcommand{\betaM}{\beta_*(\alpha)}\eqref{eq:D}}, and hence
	$$
	\left<H_\alpha'(u_k),\varphi\right> 
	+ t_{\alpha,\beta_*(\alpha)}(u_k)^{q-p}
	\left<G_{\beta_*(\alpha)}'(u_k),\varphi\right>
	= 0
	\quad \text{for all}~ \varphi \in \W,
	$$
	which implies that $u_k \to \varphi_p$ (strongly) in $\W$, up to a subsequence, and $\alpha=\lambda_1(p)$, see \cite[Lemma 3.3]{BobkovTanaka2016}.
	Therefore, if we fixed $\alpha > \lambda_1(p)$, then we get a contradiction, and, consequently, the proposition follows.
	Assume that we fixed $\alpha=\lambda_1(p)$. 
	Notice that in this case we require \ref{H}.
	Considering the $L^2$-orthogonal decomposition $u_k = \gamma_k \varphi_p + v_k$, where $\gamma_k = \|\varphi_p\|_2^{-2} \int_\Omega u_k \varphi_p \, dx$ and $\int_\Omega v_k \varphi_p \,dx = 0$, we see that $\gamma_k \to 1$ and $\|\nabla v_k\|_p \to 0$. 
	Employing now the improved Poincar\'e inequality from \cite{takac} along the same lines as in the proof of \cite[Proposition 11]{BobkovTanaka2017} (see, more precisely, \cite[pp.\ 1233-1234]{BobkovTanaka2017}), we deduce that $J_{\lambda_1(p),\beta_*}(u_k) \to 0$, which contradicts \eqref{eq:conv1}. Hence the proof is complete. 
\end{proof}

In general, if $\mathcal{J}(\beta,\mu)$ is attained, then the corresponding minimizer generates a solution of \eqref{eq:D}. 
We detail this fact as follows.
\begin{proposition}\label{prop:minimizer-localmini}
	Let $\beta \ge \beta_*(\alpha)$ and assume that $u_0\in \W$ is a minimizer of $\mathcal{J}(\beta,\mu)$ 
	for some $\mu \in (\lambda_1(q),\beta_*(\alpha)]$. 
	Then $t_{\alpha,\beta}(u_0)u_0$ is a local minimum point of $\E$ and
	$$
	\E(t_{\alpha,\beta}(u_0)u_0) \equiv \J(u_0)=\mathcal{J}(\beta,\mu)<0.
	$$ 
\end{proposition}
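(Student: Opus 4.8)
The plan is to set $v_0 := t_{\alpha,\beta}(u_0)u_0$ and to exhibit a $\W$-neighbourhood $U$ of $v_0$ on which $\E(w)\ge \E(v_0)$. The whole argument rests on two observations: every $w$ sufficiently close to $v_0$ stays admissible for $\mathcal{J}(\beta,\mu)$, and for such $w$ the energy along the fiber $t\mapsto\E(tw)$ is minimised exactly at $t=1$. Combining these through the $0$-homogeneity of $\J$ yields the local minimality directly, with no compactness or variational-inequality machinery.

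First I would extract the elementary consequences of $u_0$ lying in the admissible set of \eqref{eq:minimization}. By definition $G_\mu(u_0)<0<H_\alpha(u_0)$, and since $\mu\le\beta_*(\alpha)\le\beta$ we get $G_\beta(u_0)\le G_\mu(u_0)<0$; hence $t_{\alpha,\beta}(u_0)$ is well defined by \eqref{tu} and $v_0$ makes sense. Using the scaling identity $t_{\alpha,\beta}(tu)=t^{-1}t_{\alpha,\beta}(u)$, which follows at once from \eqref{tu} together with the homogeneities $H_\alpha(tu)=t^{p}H_\alpha(u)$ and $G_\beta(tu)=t^{q}G_\beta(u)$, one finds $t_{\alpha,\beta}(v_0)=1$, so that \eqref{eq:minivalue} gives $\E(v_0)=\J(v_0)$. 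As $\J$ is $0$-homogeneous, $\J(v_0)=\J(u_0)=\mathcal{J}(\beta,\mu)$, and $\mathcal{J}(\beta,\mu)<0$ by \eqref{eq:conv1}. This already establishes the energy identity asserted in the statement.

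Next I would localise. Since $t_{\alpha,\beta}(u_0)>0$ and $H_\alpha$, $G_\mu$, $G_\beta$ are positively homogeneous, the three strict inequalities $G_\mu(v_0)<0$, $G_\beta(v_0)<0$, $H_\alpha(v_0)>0$ all persist at $v_0$. By continuity of $H_\alpha$, $G_\mu$, $G_\beta$ on $\W$ there is an open neighbourhood $U\subset\W$ of $v_0$ on which the three inequalities remain valid. Consequently every $w\in U$ is admissible for $\mathcal{J}(\beta,\mu)$, so $\J(w)\ge\mathcal{J}(\beta,\mu)$; and, because $G_\beta(w)<0<H_\alpha(w)$ on $U$, formula \eqref{eq:minivalue} applies to $w$ and gives $\min_{t>0}\E(tw)=\J(w)$. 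Evaluating at $t=1$ yields the chain
\[
\E(w)\ \ge\ \min_{t>0}\E(tw)\ =\ \J(w)\ \ge\ \mathcal{J}(\beta,\mu)\ =\ \E(v_0)
\qquad\text{for all } w\in U,
\]
which shows that $v_0=t_{\alpha,\beta}(u_0)u_0$ is a local minimum point of $\E$.

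I do not anticipate a genuine analytic obstacle, since the proof is essentially this single chain of inequalities. The only point that requires care is justifying that a minimizer $u_0$ actually lies in the \emph{open} admissible set, rather than on its boundary where $H_\alpha(u_0)=0$ (whence $t_{\alpha,\beta}(u_0)$ would blow up) or $G_\mu(u_0)=0$. I would dispose of this by reading ``minimizer'' as an \emph{admissible} function attaining the infimum $\mathcal{J}(\beta,\mu)$: the strict inequalities are then part of membership in the admissible set, $v_0$ is well defined, and the passage to the neighbourhood $U$ is purely topological.
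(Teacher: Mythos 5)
Your argument is correct and is essentially the paper's own proof: both rest on the observation that the strict inequalities $G_\mu<0<H_\alpha$ persist in a $\W$-neighbourhood of $t_{\alpha,\beta}(u_0)u_0$, so that every nearby $w$ is admissible and the chain $\E(w)\ge \E(t_{\alpha,\beta}(w)w)=\J(w)\ge\mathcal{J}(\beta,\mu)=\E(t_{\alpha,\beta}(u_0)u_0)$ applies. The only difference is presentational — you argue directly on a neighbourhood while the paper runs the same chain by contradiction along a sequence — and your closing remark about the minimizer lying in the open admissible set is the right reading.
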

\begin{proof} 
	Suppose, by contradiction, that there exists a sequence $\{u_n\}$ convergent to $\tilde{u}_0:=t_{\alpha,\beta}(u_0)u_0$ in $\W$ such that 
	$$\E(u_n)<\E(\tilde{u}_0) 
	\quad \text{for all}~ n \in \mathbb{N}. 
	$$
	Using the fact that $u_0$ is a minimizer of $\mathcal{J}(\beta,\mu)$, we have  $G_\mu(\tilde{u}_0)<0<H_\alpha(\tilde{u}_0)$, and hence 
	$$
	G_\beta(u_n) \leq G_\mu (u_n)<0<H_\alpha(u_n)
	$$ 
	for all sufficiently large $n$, which means that 
	any such $u_n$ is an admissible function for $\mathcal{J}(\beta,\mu)$. 
	But then, using \eqref{eq:minivalue}, we get the following contradiction:
	$$
	\E(\tilde{u}_0) = \J(u_0)=\mathcal{J}(\beta,\mu)
	\le \J(u_n)
	=
	\E(t_{\alpha,\beta}(u_n)u_n) \leq \E(u_n) < \E(\tilde{u}_0)
	$$
	for all sufficiently large $n$. 
\end{proof} 

Let us now discuss the existence of a minimizer of $\mathcal{J}(\beta,\mu)$ required in Proposition \ref{prop:minimizer-localmini}.
\begin{lemma}\label{lem:3} 
	Let $\beta \geq \beta_*(\alpha)$ and $\mu \in (\lambda_1(q),\beta_*(\alpha))$.
	Then there exists a nonnegative function $u_0 \in \W$ satisfying $\|\nabla u_0\|_p = 1$ such that
	\begin{equation}\label{eq:lem3-1} 
	G_\mu(u_0) \leq 0 < H_\alpha(u_0)
	\quad \text{and}\quad 
	\J(u_0)\le \mathcal{J}(\beta,\mu)<0. 
	\end{equation}
\end{lemma}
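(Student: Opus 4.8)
The plan is to establish Lemma \ref{lem:3} by a direct minimization argument: take a minimizing sequence for $\mathcal{J}(\beta,\mu)$, normalize it using the $0$-homogeneity of $\J$, extract a weakly convergent subsequence, and invoke Lemma \ref{lem:conv} to pass to the limit. First I would pick a sequence $\{u_n\} \subset \W$ with $G_\mu(u_n) < 0 < H_\alpha(u_n)$ and $\J(u_n) \to \mathcal{J}(\beta,\mu)$; since $\mathcal{J}(\beta,\mu) < 0$ by \eqref{eq:conv1}, we may assume $\J(u_n) < 0$ for all $n$. Because $\J$ is $0$-homogeneous, I can rescale each $u_n$ so that $\|\nabla u_n\|_p = 1$ without changing $\J(u_n)$ or the signs of $G_\mu(u_n)$ and $H_\alpha(u_n)$. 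I would also replace $u_n$ by $|u_n|$, which leaves all three quantities unchanged and yields a nonnegative sequence.

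Next I would obtain compactness. The normalization $\|\nabla u_n\|_p = 1$ bounds $\{u_n\}$ in $\W$, so up to a subsequence $u_n \rightharpoonup u_0$ weakly in $\W$ and $u_n \to u_0$ strongly in $L^p(\Omega)$ (and in $L^q(\Omega)$) by the compact Sobolev embedding. The function $w_n := u_n$ then satisfies exactly the hypotheses of Lemma \ref{lem:conv}: $\|\nabla w_n\|_p = 1$, weak and strong convergence to $w_0 := u_0$, the sign condition $G_\mu(w_n) \leq 0 < H_\alpha(w_n)$, and $\liminf_{n \to +\infty} J_{\alpha,\beta_n}(w_n) < 0$ with the constant sequence $\beta_n \equiv \beta$.

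Applying Lemma \ref{lem:conv} immediately gives $u_0 \not\equiv 0$ together with
\[
G_\beta(u_0) < G_\mu(u_0) \leq 0 < H_\alpha(u_0)
\quad \text{and}\quad
\J(u_0) \leq \liminf_{n \to +\infty} \J(u_n) = \mathcal{J}(\beta,\mu).
\]
In particular $G_\mu(u_0) \leq 0 < H_\alpha(u_0)$, which is the first assertion of \eqref{eq:lem3-1}. The energy inequality $\J(u_0) \leq \mathcal{J}(\beta,\mu) < 0$ is the second. The function $u_0$ is nonnegative as a weak $\W$-limit of nonnegative functions (nonnegativity is preserved under weak convergence since the cone of nonnegative functions is closed and convex). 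Finally, a further rescaling restores the normalization $\|\nabla u_0\|_p = 1$; this is legitimate since $\J$ is $0$-homogeneous and the rescaling preserves the signs of $G_\mu(u_0)$ and $H_\alpha(u_0)$, the nonnegativity of $u_0$, and the value $\J(u_0)$. This completes the verification of \eqref{eq:lem3-1}.

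The main obstacle is entirely absorbed into Lemma \ref{lem:conv}, which does the delicate work: ruling out $u_0 \equiv 0$ (the concentration-compactness-type difficulty, where the mass of $u_n$ could escape while $H_\alpha$ stays positive), and securing the strict inequality $H_\alpha(u_0) > 0$ from the constraint $G_\mu(u_0) \leq 0$ via the definition of $\beta_*(\alpha)$ together with $\mu < \beta_*(\alpha)$. Once that lemma is available, the remaining steps are the routine normalization, extraction of a convergent subsequence, and bookkeeping of signs and homogeneity. I would be careful only to note that the strict constraint $G_\mu(u_n) < 0$ in the infimum defining $\mathcal{J}(\beta,\mu)$ relaxes in the limit to the nonstrict $G_\mu(u_0) \leq 0$, which is exactly what is claimed in \eqref{eq:lem3-1}; the minimizer sits on the boundary of the admissible set, and this is precisely why the subsequent Proposition \ref{prop:minimizer-localmini} needs the separate argument it gives.
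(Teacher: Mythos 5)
Your proof is correct and follows essentially the same route as the paper: normalize a minimizing sequence using the $0$-homogeneity and evenness of $\J$, pass to a weakly convergent subsequence, and apply Lemma \ref{lem:conv} with the constant sequence $\beta_n \equiv \beta$, followed by a final rescaling. The extra remarks (relaxation of the strict constraint in the limit, preservation of nonnegativity under weak limits) are accurate and consistent with the paper's argument.
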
 
\begin{proof} 
	First, we recall that $\mathcal{J}(\beta,\mu) < 0$ by \eqref{eq:conv1}. 
	Let $\{u_n\}$ be a minimizing sequence for $\mathcal{J}(\beta,\mu)$. 
	Since $\J$ is $0$-homogeneous and even, we can assume, without loss of generality, that $\|\nabla u_n\|_p=1$ and $u_n \ge 0$ in $\Omega$ for all $n \in \mathbb{N}$ by considering $|u_n|$ if necessary. 
	Therefore, there exists a nonnegative function $u_0 \in \W$ such that $u_n \rightharpoonup u_0$ in $\W$ and $u_n \to u_0$ in $L^p(\Omega)$, up to an appropriate subsequence.
	Applying Lemma \ref{lem:conv} (with $\beta_n=\beta$), we get \eqref{eq:lem3-1}. Using again the $0$-homogeneity of $\J$, we can assume that $\|\nabla u_0\|_p = 1$, which completes the proof.
\end{proof} 

If the function $u_0$ obtained in Lemma \ref{lem:3} satisfies $G_\mu(u_0)<0$, then $u_0$ is a minimizer of $\mathcal{J}(\beta,\mu)$. Consequently, Proposition \ref{prop:minimizer-localmini} in combination with Remark \ref{rem:positive} imply that $t_{\alpha,\beta}(u_0) u_0$ is a positive solution of \eqref{eq:D}. 
Thus, the proof of Theorem \ref{thm:1} reduces to the search of such $\beta > \beta_*(\alpha)$ and $\mu \in (\lambda_1(q),\beta_*(\alpha))$ that $G_\mu(u_0)<0$. 
The details are as follows.

\begin{proof}[Proof of Theorem \ref{thm:1}]
	Let us fix any $\mu \in (\mu_0, \beta_*(\alpha))$, where $\mu_0$ is defined in \eqref{eq:mu0} and $\mu_0 < \beta_*(\alpha)$ by Proposition \ref{prop:2}. 
	Denote by $u_0 = u_0(\beta)$ a normalized nonnegative function given by Lemma \ref{lem:3}.
	We are going to obtain the existence of $\tilde{\beta}(\alpha)>\beta_*(\alpha)$ 
	such that $G_{\mu}(u_0(\beta)) < 0$ for any $\beta \in (\beta_*(\alpha),\tilde{\beta}(\alpha))$.
	Suppose, contrary to our claim, that there exists a  sequence 
	$\{\beta_n\}$ such that $\beta_n \searrow \beta_*(\alpha)$ and $G_\mu(u_0(\beta_n))=0$ for all $n \in \mathbb{N}$.
	We will reach a contradiction by showing that the corresponding sequence $\{u_0(\beta_n)\}$ converges in $\W$, up to a subsequence, to a minimizer of $\mathcal{J}(\beta_*(\alpha),\beta_*(\alpha))$, which is impossible in view of the definition of $\mu_0$.	
	
	Since $\|\nabla u_0(\beta_n)\|_p=1$ for all $n$, there exists $\bar{u} \ge 0$ such that 
	$u_0(\beta_n)$ converges to $\bar{u}$ weakly in $\W$ 
	and strongly in $L^p(\Omega)$ and $L^q(\Omega)$, up to an appropriate subsequence.
	At the same time, in view of \eqref{eq:lem3-1}, we have
	$$
	G_\mu(u_0(\beta_n))
	=
	0
	<
	H_\alpha(u_0(\beta_n))
	\quad \text{for all}~ n \in \mathbb{N},
	$$
	and
	\begin{equation}\label{eq:thm1proof3}
	-\infty \leq 
	\liminf_{n\to+\infty}J_{\alpha,\beta_n}(u_0(\beta_n))
	\le 
	\liminf_{n\to+\infty}\mathcal{J}(\beta_n,\mu) < 0,
	\end{equation}
	where the last inequality in  \eqref{eq:thm1proof3} follows from the uniform bound \eqref{eq:conv1}.
	Consequently, applying Lemma \ref{lem:conv} (with $\beta=\beta_*(\alpha)$) to the sequence $\{u_0(\beta_n)\}$, we deduce that
	\begin{equation}\label{eq:lem4-2}
	G_{\beta^*(\alpha)}(\bar{u}) 
	<
	G_\mu(\bar{u}) 
	\leq 
	0
	<
	H_\alpha(\bar{u})
	\end{equation}
	and
	\begin{equation}\label{eq:lem4-3}
	\mathcal{J}(\beta_*(\alpha),\beta_*(\alpha))
	\le J_{\alpha,\beta_*(\alpha)}(\bar{u})\le 
\liminf_{n\to+\infty}J_{\alpha,\beta_n}(u_0(\beta_n)) \le 
	\liminf_{n\to+\infty}\mathcal{J}(\beta_n,\mu),
	\end{equation}
	where the first inequality in \eqref{eq:lem4-3} follows from the fact that $\bar{u}$ is an admissible function for $\mathcal{J}(\beta_*(\alpha),\beta_*(\alpha))$, see \eqref{eq:lem4-2}.
	On the other hand, noting that any minimizer $u_*$ of $\mathcal{J}(\beta_*(\alpha),\beta_*(\alpha))$ 
	satisfies 
	$$
	G_{\beta_n}(u_*) < G_\mu(u_*) < 0 < H_\alpha(u_*)
	$$ by the definition of $\mu_0$ and the fact that $\beta_n > \beta_*(\alpha) >\mu>\mu_0$, we get 
	\begin{equation}\label{eq:thm1proof2}
	\mathcal{J}(\beta_n,\mu)\le 
	J_{\alpha,\beta_n}(u_*)=J_{\alpha,\beta_*(\alpha)}(u_*)+o(1) 
	=\mathcal{J}(\beta_*(\alpha),\beta_*(\alpha))+o(1). 
	\end{equation}
	Therefore, combining \eqref{eq:lem4-3} with \eqref{eq:thm1proof2}, we conclude that  $\mathcal{J}(\beta_*(\alpha),\beta_*(\alpha)) 
	=J_{\alpha,\beta_*(\alpha)}(\bar{u})$, which means that $\bar{u}$ is a minimizer of 
	$\mathcal{J}(\beta_*(\alpha),\beta_*(\alpha))$.
	Moreover, since \eqref{eq:lem4-3} and \eqref{eq:thm1proof2} also imply
\begin{equation}\label{eq:thm1proof0}
	J_{\alpha,\beta_*(\alpha)}(\bar{u})=
\liminf_{n\to+\infty}J_{\alpha,\beta_n}(u_0(\beta_n)), 
\end{equation}
we get $u_0(\beta_n) \to \bar{u}$ in $\W$, up to a subsequence. 
Indeed, 
if we suppose that there is no strong convergence, then $\|\nabla \bar{u}\|_p<\liminf\limits_{n\to +\infty}\|\nabla u_0(\beta_n)\|_p$, and hence $(0<)H_\alpha(\bar{u}) <\liminf\limits_{n\to+\infty} H_\alpha(u_0(\beta_n))$, which implies a contradiction to the equality in \eqref{eq:thm1proof0}. 
Finally, let us notice that the strong convergence of $\{u_0(\beta_n)\}$ gives $G_\mu(\bar{u})=0$, see \eqref{eq:lem4-2}. 
However, this contradicts the definition of $\mu_0$ and the fact that $\mu > \mu_0$.
\end{proof}

\section{Auxiliary results II. Mountain pass type arguments}\label{sec:mountainpass}
In this section, we prepare several results related to the mountain pass theorem, which will be used to prove Theorems \ref{thm:2multiple} and \ref{thm:3multiple} in Section \ref{sec:proof:multiplicity} below. 
Since our aim is to find \textit{positive} solutions of \eqref{eq:D}, in the arguments of this section it will be convenient to consider the $C^1$-functional 
$$
\widetilde{E}_{\alpha,\beta}(u) 
=
\frac{1}{p}\widetilde{H}_\alpha (u)
+\frac{1}{q} \widetilde{G}_\beta (u),  \quad u \in \W,  
$$
where 
$$
\widetilde{H}_\alpha (u):=\|\nabla u\|_p^p-\alpha\|u_+\|_p^p 
\quad {\rm and} \quad 
\widetilde{G}_\beta(u):=\|\nabla u\|_q^q-\beta\|u_+\|_q^q,
$$
and $u_+ := \max\{u,0\}$. 
The functional $\Ep$ differs from $\E$ in that if $u \in \W$ is an \textit{arbitrary} critical point of $\Ep$, then $u$ is a nonnegative solution of $\eqref{eq:D}$, which can be easily seen by taking $u_- := \max\{-u,0\}$ as a test function.
Moreover, $u$ is a positive solution belonging to ${\rm int}\,C_0^1(\overline{\Omega})_+$ 
provided $u\not\equiv 0$, see Remark \ref{rem:positive}. 

Now we discuss the assumptions under which $\Ep$ satisfies the Palais--Smale condition. 
\begin{lemma}\label{lem:PS} 
Let $(\alpha,\beta)\not=(\lambda_1(p),\beta_*)$. 
Then $\Ep$ satisfies the Palais--Smale condition. 
\end{lemma}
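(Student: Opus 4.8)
The plan is to follow the standard two-stage scheme for the Palais--Smale condition: first show that any Palais--Smale sequence $\{u_n\}\subset\W$ (i.e.\ $\Ep(u_n)=O(1)$ and $\Ep'(u_n)\to 0$ in $(\W)^*$) is bounded, and then upgrade the resulting weak convergence to strong convergence by means of the $(S_+)$ property of the $(p,q)$-Laplacian. The second stage is routine, so I would dispatch it quickly: once $\{u_n\}$ is bounded, a subsequence converges weakly in $\W$ to some $u_0$ and, by the compactness of the embeddings $\W\hookrightarrow L^p(\Omega),L^q(\Omega)$, strongly in $L^p(\Omega)$ and $L^q(\Omega)$. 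Testing $\Ep'(u_n)\to 0$ against $u_n-u_0$, the lower-order terms $\alpha\intO (u_n)_+^{p-1}(u_n-u_0)\,dx$ and $\beta\intO (u_n)_+^{q-1}(u_n-u_0)\,dx$ vanish in the limit, whence $\langle A(u_n),u_n-u_0\rangle\to 0$, where $A$ is the $(p,q)$-Laplacian operator; its $(S_+)$ property then gives $u_n\to u_0$ in $\W$.

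The genuine difficulty is the boundedness stage, precisely because the hypothesis permits $\alpha\ge\lambda_1(p)$, where the $p$-homogeneous part of $\Ep$ is no longer coercive. First I would extract the two scalar estimates obtained by combining $\Ep(u_n)=O(1)$ with $\langle\Ep'(u_n),u_n\rangle=o(1)\|\nabla u_n\|_p$; using the $p$- and $q$-homogeneity of $\widetilde{H}_\alpha$ and $\widetilde{G}_\beta$, these yield $|\widetilde{H}_\alpha(u_n)|+|\widetilde{G}_\beta(u_n)|\le C+o(1)\|\nabla u_n\|_p$. When $\alpha<\lambda_1(p)$ the Poincar\'e inequality makes $\widetilde{H}_\alpha$ coercive and boundedness is immediate. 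For the remaining range I would argue by contradiction, assuming $\|\nabla u_n\|_p\to+\infty$ and passing to the normalized sequence $w_n:=u_n/\|\nabla u_n\|_p$, so that $\|\nabla w_n\|_p=1$ and, along a subsequence, $w_n\rightharpoonup w_0$ in $\W$ with strong convergence in $L^p(\Omega)$ and $L^q(\Omega)$.

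The heart of the matter is then to identify the blow-up profile $w_0$. Dividing the estimate for $\widetilde{G}_\beta(u_n)$ by $\|\nabla u_n\|_p^q$ and using $q<p$ gives $\widetilde{G}_\beta(w_n)\to 0$. Dividing $\Ep'(u_n)\to 0$ by $\|\nabla u_n\|_p^{p-1}$ and again using $q<p$ annihilates the $q$-Laplacian and the $\beta$-term, leaving the rescaled relation $-\Delta_p w_n-\alpha (w_n)_+^{p-1}\to 0$ in $(\W)^*$. Testing this against $w_n-w_0$ and invoking the $(S_+)$ property of $-\Delta_p$ forces $w_n\to w_0$ strongly, so $\|\nabla w_0\|_p=1$ and $w_0$ solves $-\Delta_p w_0=\alpha (w_0)_+^{p-1}$; testing with $(w_0)_-$ shows $w_0\ge 0$, hence $w_0$ is a nonnegative nontrivial eigenfunction of the $p$-Laplacian with eigenvalue $\alpha$. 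By the simplicity of $\lambda_1(p)$ and its characterization as the unique eigenvalue admitting a sign-definite eigenfunction, this forces $\alpha=\lambda_1(p)$ and $w_0=\varphi_p$.

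Finally, I would recover the second coordinate: since $w_n\to\varphi_p$ in $\W$, and hence in $W_0^{1,q}$, passing to the limit in $\widetilde{G}_\beta(w_n)\to 0$ gives $\|\nabla\varphi_p\|_q^q-\beta\|\varphi_p\|_q^q=0$, that is, $\beta=\beta_*$. Thus an unbounded Palais--Smale sequence could exist only at $(\alpha,\beta)=(\lambda_1(p),\beta_*)$, which is excluded by hypothesis, so the contradiction establishes boundedness and the lemma follows. I expect the \emph{main obstacle} to be exactly this identification step: one must handle the two competing homogeneities simultaneously so that the subcritical $q$-terms disappear under the natural $p$-scaling, and then extract the rigid eigenfunction limit through the $(S_+)$ property together with the simplicity of $\lambda_1(p)$; the role of the excluded point $(\lambda_1(p),\beta_*)$ is precisely that it is the only parameter pair compatible with such a blow-up.
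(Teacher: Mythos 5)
Your proposal is correct and follows essentially the same route as the paper: reduce to boundedness via the $(S_+)$ property, argue by contradiction, normalize $v_n=u_n/\|\nabla u_n\|_p$, identify the blow-up profile as a nonnegative $p$-Laplacian eigenfunction (hence $\alpha=\lambda_1(p)$ and $v_0=\varphi_p$), and then use $\widetilde{G}_\beta(v_n)\to 0$ to force $\beta=\beta_*$. The only differences are cosmetic: the paper delegates the identification of the limit profile to \cite[Lemma 3.3]{BobkovTanaka2016} rather than rescaling the equation by $\|\nabla u_n\|_p^{p-1}$ explicitly, and it obtains nonnegativity by testing $\Ep'(u_n)$ against $-(u_n)_-$ along the sequence instead of testing the limit equation.
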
 
\begin{proof} 
	Let us take any Palais--Smale sequence $\{u_n\}$ for $\Ep$. 
	According to the $(S_+)$-property of the operator $-\Delta_p - \Delta_q$ (see, e.g., \cite[Remark 3.5]{BobkovTanaka2016}), the desired Palais--Smale condition for $\Ep$ will follow if $\{u_n\}$ is bounded. 
	Suppose, by contradiction, that $\|\nabla u_n\|_p \to +\infty$ as $n \to +\infty$, up to a subsequence. 
	Considering the sequence of normalized functions $v_n:=\frac{u_n}{\|\nabla u_n\|_p}$ and 
	arguing in much the same way as in \cite[Lemma 3.3]{BobkovTanaka2016}, we derive that 
	$\{v_n\}$ converges in $\W$, up to a subsequence, to some nonzero eigenfunction $v_0$ of the $p$-Laplacian associated to the eigenvalue $\alpha$.
	Noting that 
	$$
	o(1)\|\nabla (u_n)_-\|_p
	=
	\left< \Ep^\prime (u_n),-(u_n)_-\right>
	=
	\|\nabla (u_n)_-\|_p^p + \|\nabla (u_n)_-\|_q^q, 
	$$
	we deduce that $v_0\ge 0$ in $\Omega$. This yields $\alpha=\lambda_1(p)$ and $v_0=\varphi_p$, since $\varphi_p$ is the only constant-sign eigenfunction of the $p$-Laplacian and we assumed that $\|\nabla\varphi_p\|_p=1$. 
	Thus, if $\alpha \neq \lambda_1(p)$, then we get a contradiction, and hence the Palais--Smale condition for $\Ep$ holds for any $\beta \in \mathbb{R}$. 
	On the other hand, in the case $\alpha=\lambda_1(p)$ and $\beta \neq \beta_*$, we get 
	\begin{align*} 
	o(1) =\frac{1}{\|\nabla u_n\|_p^q}
	\left(p\Ep(u_n) -\left< \Ep^\prime (u_n),u_n\right>\right) =\left(\frac{p}{q}-1\right)\,\widetilde{G}_\beta (v_n).
	\end{align*}
	This yields $0=\widetilde{G}_\beta (\varphi_p)=G_\beta (\varphi_p)$, which contradicts the assumption $\beta \neq \beta_*$.
\end{proof}

Before providing a mountain pass-type result, we give the following auxiliary lemma. 
\begin{lemma}\label{lem:construct-path}
Let $u_1$ be a local minimum point of $\Ep$ such that 
\begin{equation}\label{eq:path-1}
\inf_{u\in \mathcal{N}_{\alpha,\beta}}\E(u) < \Ep(u_1)<0.
\end{equation}
Then there exists a continuous path $\eta\in C([0,1],\W)$ 
such that 
\begin{equation}\label{eq:path-2} 
\eta(0)=u_1,
\quad 
\Ep(\eta(1)) < \Ep(u_1), 
\quad \text{and} \quad 
\max_{s\in[0,1]}\Ep(\eta(s))<0. 
\end{equation}
\end{lemma}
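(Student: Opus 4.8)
To prove Lemma~\ref{lem:construct-path}, the plan is to build $\eta$ as a three-stage path: scale $u_1$ down to a small multiple, interpolate at that small scale towards a lower-energy point of $\N$, and then scale back up. I first record the structural facts I will use. Since $u_1$ is a local minimum of the $C^1$ functional $\Ep$, it is a critical point, hence a nonnegative solution of \eqref{eq:D} lying in $\N$ by Remark~\ref{rem:positive}; in particular $\Ep(u_1)=\E(u_1)<0$. On $\N$ we have $H_\alpha(u_1)=-G_\beta(u_1)$ and $\E(u_1)=\frac{q-p}{pq}H_\alpha(u_1)$, so $\E(u_1)<0$ forces $H_\alpha(u_1)>0>G_\beta(u_1)$. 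By \eqref{tu}--\eqref{eq:minivalue} the fiber $t\mapsto\E(tu_1)$ attains its unique negative minimum at $t_{\alpha,\beta}(u_1)=1$ and tends to $0^-$ as $t\to0^+$, whence $\E(tu_1)<0$ for all $t\in(0,1]$. Since $\inf_{\N}\E<\Ep(u_1)$, I may choose $w_0\in\N$ with $\E(w_0)<\E(u_1)$; replacing $w_0$ by $|w_0|$ (which leaves $H_\alpha,G_\beta,\E$ unchanged and keeps $w_0\in\N$) I may assume $w_0\ge0$, so that $\Ep(w_0)=\E(w_0)$ and, exactly as above, $H_\alpha(w_0)>0>G_\beta(w_0)$ and $\E(tw_0)<0$ for all $t\in(0,1]$.

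The key device for the middle stage is the $q$-interpolation
$$\sigma_\theta:=\bigl((1-\theta)\,u_1^{\,q}+\theta\,w_0^{\,q}\bigr)^{1/q},\qquad \theta\in[0,1],$$
between the nonnegative functions $u_1,w_0$. Its gradient is $\nabla\sigma_\theta=\sigma_\theta^{1-q}\bigl((1-\theta)u_1^{q-1}\nabla u_1+\theta\,w_0^{q-1}\nabla w_0\bigr)$, and I will draw two consequences from it. First, the joint convexity of $(\rho,\xi)\mapsto|\xi|^q/\rho^{\,q-1}$ on $(0,\infty)\times\mathbb{R}^N$ gives the pointwise hidden-convexity inequality $|\nabla\sigma_\theta|^q\le(1-\theta)|\nabla u_1|^q+\theta|\nabla w_0|^q$; since also $\|\sigma_\theta\|_q^q=(1-\theta)\|u_1\|_q^q+\theta\|w_0\|_q^q$ exactly, integration yields
$$G_\beta(\sigma_\theta)\le(1-\theta)\,G_\beta(u_1)+\theta\,G_\beta(w_0)<0\qquad\text{for all }\theta\in[0,1].$$
Second, bounding $\sigma_\theta^{1-q}u_1^{q-1}\le(1-\theta)^{-(q-1)/q}$ and symmetrically gives the pointwise estimate $|\nabla\sigma_\theta|\le(1-\theta)^{1/q}|\nabla u_1|+\theta^{1/q}|\nabla w_0|\le|\nabla u_1|+|\nabla w_0|\in L^p(\Omega)$. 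Hence $\sigma_\theta\in\W$, and this uniform $L^p$-domination together with the a.e.\ continuity of $\theta\mapsto\nabla\sigma_\theta$ shows, via dominated convergence, that $\theta\mapsto\sigma_\theta$ is continuous from $[0,1]$ into $\W$. In particular $\theta\mapsto H_\alpha(\sigma_\theta)$ and $\theta\mapsto G_\beta(\sigma_\theta)$ are continuous on $[0,1]$.

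With these at hand I assemble the path. Because $\theta\mapsto G_\beta(\sigma_\theta)$ is continuous and strictly negative on the compact $[0,1]$, while $\theta\mapsto H_\alpha(\sigma_\theta)$ is bounded above, the fiber identity $\E(t\sigma_\theta)=\frac{t^p}{p}H_\alpha(\sigma_\theta)+\frac{t^q}{q}G_\beta(\sigma_\theta)$ together with $q<p$ lets me fix $\delta\in(0,1)$ so small that $\E(\delta\sigma_\theta)<0$ for every $\theta\in[0,1]$. I then define $\eta$ by concatenation: on $[0,\tfrac13]$ let $\eta$ run through $t\,u_1$ from $t=1$ to $t=\delta$; on $[\tfrac13,\tfrac23]$ let $\eta(s)=\delta\,\sigma_{\theta(s)}$ with $\theta$ increasing from $0$ to $1$; and on $[\tfrac23,1]$ let $\eta$ run through $t\,w_0$ from $t=\delta$ to $t=1$. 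The junctions match ($\delta\sigma_0=\delta u_1$, $\delta\sigma_1=\delta w_0$), so $\eta\in C([0,1],\W)$ with $\eta(0)=u_1$ and $\eta(1)=w_0$. All three stages consist of nonnegative functions, so $\Ep=\E$ along $\eta$; the first and third stages are negative since $\delta<1$ and $\E(tu_1),\E(tw_0)<0$ on $(0,1]$, and the middle stage is negative by the choice of $\delta$. Thus $\max_{s}\Ep(\eta(s))<0$ and $\Ep(\eta(1))=\E(w_0)<\E(u_1)=\Ep(u_1)$, which is \eqref{eq:path-2}.

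The only genuinely delicate point is the middle stage: a naive straight-line homotopy from $u_1$ to $w_0$ need not keep the energy negative, because $G_\beta$ is indefinite along segments. The hidden-convexity inequality for the $q$-interpolation is exactly what forces $G_\beta(\sigma_\theta)<0$ along the entire curve, after which the smallness of $\delta$ disposes of the bounded $p$-part; the remaining work is the routine verification that $\theta\mapsto\sigma_\theta$ is a continuous $\W$-valued curve, for which the pointwise gradient bound supplies the required $L^p$-dominating function.
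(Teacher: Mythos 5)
Your proof is correct, and it takes a genuinely different route from the paper's in the second half of the argument. Both proofs hinge on the same key device: the $q$-interpolation $\bigl((1-\theta)u_1^q+\theta w_0^q\bigr)^{1/q}$ between the two nonnegative Nehari points, whose hidden convexity forces $G_\beta<0$ along the whole curve. Where you diverge is in how the indefinite $p$-part is tamed. The paper projects the interpolation curve onto the Nehari manifold via the fiber map $t_{\alpha,\beta}(\xi(s))\xi(s)$, which requires a case distinction: if $H_\alpha(\xi(s))>0$ throughout, the projected path has energy $\J(\xi(s))<0$ everywhere and connects $u_1$ to $v_1$ directly (the endpoints are fixed since $t_{\alpha,\beta}=1$ on $\N$); if $H_\alpha$ vanishes at some first $s_0$, then $\J(\xi(s))\to-\infty$ as $s\nearrow s_0$ and one truncates the path at a point of sufficiently low energy instead of reaching $v_1$. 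You avoid this dichotomy entirely by rescaling: shrinking the interpolation to scale $\delta$ makes the $t^q$-term (uniformly negative by hidden convexity) dominate the bounded $t^p$-term regardless of the sign of $H_\alpha(\sigma_\theta)$, and the two radial stages $t\,u_1$ and $t\,w_0$ stay negative because $t=1$ is the global fiber minimum for a Nehari point with negative energy. Your three-stage concatenation is slightly longer to write but more robust (no case analysis, no need for $H_\alpha>0$ along the curve, and the path genuinely ends at $w_0$ in all cases); you are also more careful than the paper about verifying that $\theta\mapsto\sigma_\theta$ is continuous as a $\W$-valued map, a point the paper leaves implicit.
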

\begin{proof} 
Noting that $u_1 \in {\rm int}\,C_0^1(\overline{\Omega})_+$ (see Remark \ref{rem:positive}) and $u_1 \in \mathcal{N}_{\alpha,\beta}$, we get
$$
0>\Ep(u_1)=\E(u_1)=\frac{p-q}{pq}G_\beta(u_1)
=-\frac{p-q}{pq}H_\alpha(u_1),
$$
and hence 
\begin{equation*}\label{eq:path-3}
\widetilde{G}_\beta(u_1) 
=
G_\beta(u_1)
< 0 <
H_\alpha(u_1)
=
\widetilde{H}_\alpha(u_1). 
\end{equation*} 
According to \eqref{eq:path-1}, we can find $v_1 \in\mathcal{N}_{\alpha,\beta}$ such that $\E(v_1)<\Ep(u_1)<0$.
Moreover, since $H_\alpha$ and $G_\beta$ are even, we may assume, 
by considering $|v_1|$ if necessary, that 
$v_1 \geq 0$ in $\Omega$.
Therefore,
\begin{equation*}\label{eq:path-5} 
\widetilde{G}_\beta(v_1)
=
G_\beta(v_1) 
< 0 <
H_\alpha(v_1)
=
\widetilde{H}_\alpha(v_1) \quad 
\text{and} \quad 
\Ep(v_1)<\Ep(u_1)<0.
\end{equation*}

Let us consider the path
$$
\xi(s) 
= 
\left((1-s)u_1^q +sv_1^q\right)^{1/q}
\quad \text{for}~ s\in [0,1]. 
$$
The hidden convexity of $\xi$ (see, e.g., \cite[Lemma 2.4]{TTU} or \cite[Proposition 2.6]{BF}) implies that
\begin{equation}\label{eq:path-6}
G_\beta(\xi(s))
\le 
(1-s)G_\beta (u_1)
+
s G_\beta(v_1)
\le 
\max\{G_\beta(u_1),G_\beta(v_1)\}<0
\quad \text{for all}~ s\in[0,1]. 
\end{equation}
Assume first that $H_\alpha(\xi(s))>0$ for all $s\in [0,1]$, and define the new path
$\eta(s) = t_{\alpha,\beta}(\xi(s))\xi(s)$, $s\in [0,1]$, 
where $t_{\alpha,\beta}(\xi(s))$ is given by \eqref{tu}. 
Noting that $t_{\alpha,\beta}(\xi(0))=t_{\alpha,\beta}(\xi(1))=1$ in view of $u_1,v_1 \in \N$, and that
$$
\Ep(\eta(s))
=
\E(\eta(s))
=
\J(\xi(s))
=
-\frac{p-q}{pq}\, 
\frac{|G_\beta(\xi(s))|^\frac{p}{p-q}}{H_\alpha(\xi(s))^\frac{q}{p-q}}<0
\quad \text{for all}~ s \in [0,1]
$$ 
by \eqref{eq:minivalue}, we readily see that $\eta$ satisfies \eqref{eq:path-2}.

Recalling that $H_\alpha(\xi(0))>0$ and $H_\alpha(\xi(1))>0$, assume now that there exists $s_0 \in (0,1)$ such that $H_\alpha(\xi(s_0))=0$. 
Without loss of generality, we may set 
$$
s_0 = \inf\{s\in (0,1):~ H_\alpha(\xi(s))\le 0\},
$$ 
and so $H_\alpha (\xi(s))>0$ for all $s\in (0,s_0)$.  
This implies that $\J(\xi(s))\to -\infty$ as $s \nearrow s_0$, thanks to \eqref{eq:path-6}. 
Thus, there exists some $s_1 \in (0, s_0)$ such that 
$$
\J(\xi(s_1)) <\Ep(u_1). 
$$
Considering the path $\eta(s) = t_{\alpha,\beta}(\xi(s_1 s))\xi(s_1 s)$ for $s\in[0,1]$, we complete the proof. 
\end{proof}

\begin{theorem}\label{thm:MPsol} 
Let $(\alpha,\beta)\not=(\lambda_1(p),\beta_*)$. 
Assume that $u_1$ is a local minimum point of $\Ep$ such that 
\begin{equation}\label{eq:path-4}
\inf_{u\in \mathcal{N}_{\alpha,\beta}}\E(u) < \Ep(u_1)<0.
\end{equation}
Then there exists another critical point $u_2$ of $\Ep$ satisfying
\begin{equation}\label{eq:MP-2}
\Ep(u_1) \le \Ep(u_2) < 0. 
\end{equation}
\end{theorem}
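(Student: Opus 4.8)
The plan is to realize $u_2$ as a mountain pass point lying above the local minimum $u_1$, using the descending path supplied by Lemma~\ref{lem:construct-path} as the ``far'' end of the valley. First I would note that the compactness ingredient is available: since $(\alpha,\beta)\ne(\lambda_1(p),\beta_*)$, Lemma~\ref{lem:PS} guarantees that $\Ep$ satisfies the Palais--Smale condition. Next, observing that hypothesis \eqref{eq:path-4} is literally \eqref{eq:path-1}, I apply Lemma~\ref{lem:construct-path} to obtain a path $\eta\in C([0,1],\W)$ with $\eta(0)=u_1$, $\Ep(\eta(1))<\Ep(u_1)$, and $\max_{s\in[0,1]}\Ep(\eta(s))<0$. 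Writing $v:=\eta(1)$, I then introduce the minimax value
\[
c:=\inf_{\gamma\in\Gamma}\,\max_{s\in[0,1]}\Ep(\gamma(s)),
\qquad
\Gamma:=\{\gamma\in C([0,1],\W):~\gamma(0)=u_1,~\gamma(1)=v\}.
\]
Since $\eta\in\Gamma$ we get $c\le\max_{s}\Ep(\eta(s))<0$, while the fact that every admissible path starts at $u_1$ gives $c\ge\Ep(u_1)$. Thus $\Ep(u_1)\le c<0$, and the whole point of the path staying below $0$ is to force the upper bound $c<0$ in \eqref{eq:MP-2}.

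I would then distinguish two cases according to whether $u_1$ is a strict local minimum in the minimax sense. In the nondegenerate case $c>\Ep(u_1)=\max\{\Ep(u_1),\Ep(v)\}$, the classical mountain pass theorem applies (thanks to Palais--Smale) and yields a critical point $u_2$ of $\Ep$ with $\Ep(u_2)=c$, so that $\Ep(u_1)<\Ep(u_2)<0$; in particular $u_2\ne u_1$, and \eqref{eq:MP-2} holds.

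The delicate situation, which I expect to be the main obstacle, is the degenerate case $c=\Ep(u_1)$: here the minimax value coincides with the value at the local minimum, and the bare mountain pass theorem produces nothing new. In this case $u_1$ necessarily fails to be a \emph{strict} local minimum, meaning $\inf_{\|u-u_1\|=\rho}\Ep(u)=\Ep(u_1)$ for all small $\rho>0$. I would handle it by localizing critical points at the level $c$, in the spirit of the Pucci--Serrin structure theorem for the critical set in the mountain pass theorem (or by a direct quantitative deformation argument): if $u_1$ were the only critical point of $\Ep$ at level $c$ in a neighborhood, then the Palais--Smale condition together with the deformation lemma would give $\inf_{\|u-u_1\|=\rho}\Ep(u)>c$ for all small $\rho>0$, contradicting the non-strictness of the local minimum. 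Hence there is a critical point $u_2\ne u_1$ at level $c=\Ep(u_1)<0$, and \eqref{eq:MP-2} again holds. In both cases $\Ep(u_2)<0$ forces $u_2\not\equiv0$, so that $u_2$ is the desired additional (positive) critical point.
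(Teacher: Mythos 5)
Your argument is correct and essentially coincides with the paper's proof: both combine the Palais--Smale condition from Lemma \ref{lem:PS}, the descending path from Lemma \ref{lem:construct-path}, and the same minimax value $c$ over paths joining $u_1$ to $\eta(1)$. The only difference is that the paper invokes the generalized mountain pass theorem of Pucci and Serrin \cite[Theorem 1]{PS} directly, which subsumes your two-case analysis --- in particular the degenerate case $c=\Ep(u_1)$, where the failure of $u_1$ to be a strict local minimum forces a second critical point at the same level.
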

\begin{proof} 
	Let $\eta$ be a path given by Lemma \ref{lem:construct-path}.
	Since $u_1$ is a local minimum point of $\Ep$, there exists $r \in (0, \|\nabla (u_1 - \eta(1))\|_p)$ such that
	\begin{equation*}\label{eq:MP-3} 
	\Ep(u_1) \le \Ep(u) < 0 
	\quad \text{for every}~ u\in B_r(u_1),
	\end{equation*}
	where $B_r(u_1) = \{u \in \W: \|\nabla (u_1-u)\|_p \leq r\}$. 
	Therefore, the generalized mountain pass theorem \cite[Theorem 1]{PS} in combination with Lemma \ref{lem:PS} implies that 
	$$
	c := \inf_{\gamma\in\Gamma} \max_{s\in[0,1]} \Ep(\gamma(s)) \geq \Ep(u_1)
	$$
	is a critical level of $\Ep$, and there exists a critical point $u_2$ on the level $c$ which is different from $u_1$.
	Here
	$$
	\Gamma 
	:=
	\left\{
	\gamma\in C([0,1],\W):~ \gamma(0) = u_1,~ 
	\gamma(1)=\eta(1)
	\right\}.
	$$
	The properties \eqref{eq:path-2} of the admissible path $\eta$ yield $c<0$, which gives \eqref{eq:MP-2}.
\end{proof}

\section{Multiplicity. The proofs of Theorems \ref{thm:2multiple} and \ref{thm:3multiple}}\label{sec:proof:multiplicity}
In this section, we prove Theorems \ref{thm:2multiple} and \ref{thm:3multiple} using the results of Section \ref{sec:mountainpass}.
A local minimum point of $\Ep$ will be obtained by the super- and subsolution method.
Let us denote, for brevity,  
\begin{equation*}\label{def:f}
f_{\alpha,\beta}(u)
=
\alpha |u|^{p-2}u+\beta|u|^{q-2}u,
\end{equation*}
and recall that a function $u \in W^{1,p}$ is called supersolution 
(resp.\ subsolution) of \eqref{eq:D} if 
$u \ge 0$ (resp.\ $\le 0$) on $\partial\Omega$ in the sense of traces and 
\begin{equation}\label{eq:supersubsol}
\intO |\nabla u|^{p-2}\nabla u\nabla\varphi \,dx 
+ 
\intO |\nabla u|^{q-2}\nabla u\nabla\varphi \,dx 
\geq 
\intO f_{\alpha,\beta}(u) \varphi\,dx
\quad (\text{resp.}~\le 0)
\end{equation}
for any nonnegative $\varphi\in \W$. 
If, in addition, the strict inequality in \eqref{eq:supersubsol} is satisfied for any nonnegative and nonzero $\varphi$, then 
$u$ is called strict supersolution (resp.\ strict subsolution) of \eqref{eq:D}.

Taking any $v, w \in L^\infty(\Omega)$ such that $v\le w$ a.e.\ in $\Omega$, 
we introduce the truncation
\begin{gather*}
f_{\alpha,\beta}^{[v,w]}(x,t)
=
\begin{cases}
f_{\alpha,\beta}(v(x)) & {\rm if}\ t\le v(x),
\\
f_{\alpha,\beta}(t) & {\rm if }\ v(x) < t < w(x), \\
f_{\alpha,\beta}(w(x)) & {\rm if}\ t\ge w(x),
\end{cases}
\end{gather*}
and define the corresponding $C^1$-functional 
\begin{equation*}
\E^{[v,w]}(u)
=
\frac{1}{p}\intO |\nabla u|^p\, dx 
+
\frac{1}{q}\intO |\nabla u|^q\,dx 
-
\intO \int_0^{u(x)}f_{\alpha,\beta}^{[v,w]}(x,t)\,dt\,dx, 
\quad
u \in \W.
\end{equation*}
If $v$ and $w$ are sub- and supersolutions of \eqref{eq:D}, respectively, then critical points of $\E^{[v,w]}$ are solutions of \eqref{eq:D} and they belong to the ordered interval $[v,w]$, see, e.g., \cite[Remark 2]{BobkovTanaka2015}. 

We will make use of the following two lemmas.
\begin{lemma}[\protect{\cite[Lemma 6 and Remark 2]{BobkovTanaka2015}}]\label{lem:ss-method} 
Let $\alpha \in \mathbb{R}$ and $\beta>\lambda_1(q)$, and let 
$w\in {\rm int}\,C^1(\overline{\Omega})_+$ be a positive supersolution of 
\eqref{eq:D}. 
Then $\inf_{\W} \E^{[0,w]}<0$, the infimum is attained, and the corresponding global minimum point $u \in [0,w]$ satisfies \eqref{eq:D} and belongs to ${\rm int}\,C_0^1(\overline{\Omega})_+$. 
\end{lemma}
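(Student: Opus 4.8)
The plan is to obtain the minimizer directly by the direct method of the calculus of variations applied to the truncated functional $\E^{[0,w]}$, and then to promote the resulting critical point to a genuine positive solution of \eqref{eq:D} via the sub- and supersolution structure recalled just before the statement, together with Remark \ref{rem:positive}.

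First I would check that $\E^{[0,w]}$ is coercive and weakly sequentially lower semicontinuous on $\W$. Since $f_{\alpha,\beta}(0)=0$ and $w\in L^\infty(\Omega)$, the truncated integrand $f_{\alpha,\beta}^{[0,w]}(x,\cdot)$ vanishes on $(-\infty,0]$ and is bounded on $[0,+\infty)$ by $\esssup_\Omega |f_{\alpha,\beta}(w)|$; consequently the primitive $\int_0^{u(x)} f_{\alpha,\beta}^{[0,w]}(x,t)\,dt$ is controlled by $C(1+|u(x)|)$ for a constant $C=C(\alpha,\beta,\|w\|_\infty)$. Integrating and using the Poincar\'e and Young inequalities, the potential term is dominated by $\tfrac1{2p}\|\nabla u\|_p^p + C'$, so the leading term $\tfrac1p\|\nabla u\|_p^p$ forces $\E^{[0,w]}(u)\to+\infty$ as $\|\nabla u\|_p\to+\infty$, giving coercivity. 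The gradient terms are convex and continuous, hence weakly lower semicontinuous, while the potential term is weakly continuous because $\W$ embeds compactly into $L^p(\Omega)$ and the map $u\mapsto\intO \int_0^{u} f_{\alpha,\beta}^{[0,w]}(x,t)\,dt\,dx$ is strongly continuous on $L^p(\Omega)$, the integrand being continuous in $u$ with at most linear growth. Thus $\E^{[0,w]}$ attains its infimum at some $u\in\W$.

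Next I would verify that $\inf_{\W}\E^{[0,w]}<0$, which is where the hypothesis $\beta>\lambda_1(q)$ enters. Since $\varphi_q,w\in{\rm int}\,C_0^1(\overline{\Omega})_+$, there is $t_0>0$ with $t\varphi_q\le w$ in $\Omega$ for all $t\in(0,t_0]$; on this range the truncation is inactive, so
$$
\E^{[0,w]}(t\varphi_q)=\frac{t^p}{p}\,H_\alpha(\varphi_q)+\frac{t^q}{q}\,G_\beta(\varphi_q).
$$
Because $\|\nabla\varphi_q\|_q^q=\lambda_1(q)\|\varphi_q\|_q^q$, we have $G_\beta(\varphi_q)=(\lambda_1(q)-\beta)\|\varphi_q\|_q^q<0$, and since $q<p$ the negative $t^q$-term dominates the $t^p$-term for small $t>0$; hence $\E^{[0,w]}(t\varphi_q)<0$ for small $t$, giving $\inf_{\W}\E^{[0,w]}<0$.

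Finally I would conclude. As $0$ is a subsolution and $w$ is a supersolution of \eqref{eq:D}, the comparison structure recalled before the lemma guarantees that the minimizer $u$ is a solution of \eqref{eq:D} with $0\le u\le w$ a.e.\ in $\Omega$. Since $\E^{[0,w]}(u)=\inf_{\W}\E^{[0,w]}<0=\E^{[0,w]}(0)$, we have $u\not\equiv 0$; being a nonzero nonnegative solution, $u\in{\rm int}\,C_0^1(\overline{\Omega})_+$ by Remark \ref{rem:positive}. I expect the main obstacle to be the passage from a critical point of the \emph{truncated} functional to an actual solution of the original problem lying in $[0,w]$: it rests on the comparison argument built from the fact that $0$ and $w$ are sub- and supersolutions. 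The coercivity and semicontinuity step and the test-function computation are routine once the truncation is in place.
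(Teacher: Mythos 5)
Your argument is correct: the direct method for the truncated functional, the test computation with $t\varphi_q$ for small $t>0$ using $G_\beta(\varphi_q)=(\lambda_1(q)-\beta)\|\varphi_q\|_q^q<0$ and $q<p$, and the passage to a solution in $[0,w]$ via the sub-/supersolution truncation (with $0$ as subsolution) followed by Remark \ref{rem:positive} is exactly the standard route. Note that the paper itself gives no proof of this lemma --- it is imported from \cite[Lemma 6 and Remark 2]{BobkovTanaka2015} --- and your proposal reproduces the expected argument of that reference without any gap.
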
 

\begin{lemma}\label{lem:minc}
	Let $\alpha \geq 0$ and $\beta > \lambda_1(q)$. 
	Let $w\in {\rm int}\,C_0^1(\overline{\Omega})_+$ be a positive strict supersolution of \eqref{eq:D}, that is,
\begin{equation}\label{eq:minc:0} 
\langle \E^\prime (w),\varphi \rangle >0 \quad 
\text{for any nonnegative and nonzero}~ \varphi\in \W.
\end{equation}
	Let $u \in {\rm int}\,C_0^1(\overline{\Omega})_+$ be a global minimum point of $\E^{[0,w]}$ given by Lemma \ref{lem:ss-method}. 
	Then $u \in (0,w)$ and $u$ is a local minimum point of both $\E$ and 
$\widetilde{E}_{\alpha,\beta}$ in $C_0^1(\overline{\Omega})$-topology.
\end{lemma}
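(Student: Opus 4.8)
\textbf{Proof plan for Lemma \ref{lem:minc}.}

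The plan is to establish three separate claims: first that the minimizer $u$ lies strictly inside the order interval, i.e.\ $u \in (0,w)$; second that $u$ is a local minimum of $\E$ (and hence of $\Ep$, since the two functionals coincide on nonnegative functions) in the $C_0^1(\overline{\Omega})$-topology. The strict ordering $u<w$ is the crucial ingredient, because once we know $u$ stays away from the obstacle $w$, a small $C_0^1$-perturbation of $u$ remains inside the interval $[0,w]$, where $\E^{[0,w]}$ and $\E$ agree, so the global minimality of $u$ for $\E^{[0,w]}$ upgrades to a local minimality for $\E$.

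First I would show $u \in (0,w)$. That $u>0$ in $\Omega$ and $u \in {\rm int}\,C_0^1(\overline{\Omega})_+$ is already provided by Lemma \ref{lem:ss-method}, so the content is the strict inequality $u < w$, meaning $w - u \in {\rm int}\,C_0^1(\overline{\Omega})_+$. Here I would exploit the hypothesis that $w$ is a \emph{strict} supersolution, \eqref{eq:minc:0}. Since $u$ solves \eqref{eq:D} (being a critical point of $\E^{[0,w]}$ in $[0,w]$) while $w$ strictly oversolves it, the difference satisfies a differential inequality of the form $-\Delta_p w - \Delta_q w > -\Delta_p u - \Delta_q u$ weakly on $\{0 \le u \le w\}$, together with $f_{\alpha,\beta}$ being the relevant right-hand side; using $\alpha \ge 0$ and $\beta > \lambda_1(q)$ to control the sign/monotonicity of the reaction term, a strong comparison principle (or the strong maximum principle and boundary point lemma applied to $w-u$, cf.\ the tools cited in Remark \ref{rem:positive}) then forces $w - u > 0$ in $\Omega$ with negative normal derivative on $\partial\Omega$, i.e.\ $w-u \in {\rm int}\,C_0^1(\overline{\Omega})_+$.

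Once $u<w$ and $u>0$ strictly in the interior with the correct boundary behaviour, i.e.\ both $u$ and $w-u$ lie in ${\rm int}\,C_0^1(\overline{\Omega})_+$, I would pick a $C_0^1$-ball $B$ around $u$ small enough that every $v \in B$ satisfies $0 \le v \le w$ pointwise; this is exactly where the openness of the cone interior is used. For such $v$ we have $\E^{[0,w]}(v) = \E(v)$ because the truncation is inactive on $[0,w]$, while $u$ being the global minimum of $\E^{[0,w]}$ gives $\E(u) = \E^{[0,w]}(u) \le \E^{[0,w]}(v) = \E(v)$. Hence $u$ is a local minimum of $\E$ in the $C_0^1$-topology, and since $\E(v) = \Ep(v)$ for all $v \ge 0$, the same inequality shows $u$ is a local minimum of $\Ep$ in the $C_0^1$-topology as well. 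The main obstacle I expect is the strict comparison $u<w$: one must verify that the reaction term $f_{\alpha,\beta}$ is handled correctly under the sign constraints $\alpha \ge 0$, $\beta > \lambda_1(q)$ so that the strong maximum principle applies to $w-u$ rather than merely giving the non-strict ordering already guaranteed by the sub-supersolution construction.
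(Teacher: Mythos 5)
Your overall architecture matches the paper's: prove $w-u\in {\rm int}\,C_0^1(\overline{\Omega})_+$, then use the openness of the cone interior to place a small $C_0^1$-ball around $u$ inside the order interval $[0,w]$, where $\E^{[0,w]}$, $\E$ and $\Ep$ all coincide, so that global minimality of $\E^{[0,w]}$ yields local minimality of the other two. That second half of your argument is correct and is exactly what the paper does.

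The gap is in the step you yourself flag as the main obstacle: you dispose of the strict comparison $u<w$ by invoking ``a strong comparison principle (or the strong maximum principle and boundary point lemma applied to $w-u$).'' No such off-the-shelf tool applies here. The strong maximum principle cited in Remark \ref{rem:positive} concerns a \emph{single} nonnegative solution of a quasilinear equation; the difference $v=w-u$ of two functions does not solve an equation of that form. What $v$ satisfies is the linearized equation $-\mathrm{div}\bigl(\bigl[\int_0^1 A(\nabla((1-s)u+sw))\,ds\bigr]\nabla v\bigr)>f_{\alpha,\beta}(w)-f_{\alpha,\beta}(u)$, whose coefficient matrix degenerates (for $p,q>2$) or blows up (for $p,q<2$) wherever $\nabla u$ and $\nabla w$ vanish simultaneously; strong comparison principles for the $p$-Laplacian, let alone the $(p,q)$-Laplacian, are known to be delicate and fail in general without extra nondegeneracy hypotheses. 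The paper circumvents this by splitting the domain. Near $\partial\Omega$, the memberships $u,w\in{\rm int}\,C_0^1(\overline{\Omega})_+$ force $|\nabla((1-s)u+sw)|\ge\varepsilon>0$ on a strip $\overline{\Omega}_\delta$ uniformly in $s$, so the averaged matrix is uniformly elliptic there and the \emph{classical linear} strong maximum principle and Hopf lemma give $v>0$ in $\Omega_\delta$ and $\partial v/\partial\nu<0$ on $\partial\Omega$. In the interior $\Omega\setminus\Omega_{\delta'}$, where gradients may vanish, the paper instead shifts $u$ by a small constant $C$ with $u+C\le w$ on $\partial\Omega_{\delta'}\cap\Omega$, notes that $z=u+C$ satisfies the same divergence-form identity with right-hand side $f_{\alpha,\beta}(u)$, and runs a weak comparison argument testing with $\max\{z-w,0\}$, using the monotonicity of $f_{\alpha,\beta}$ on positive arguments (this is where $\alpha\ge0$ and $\beta>\lambda_1(q)>0$ enter) together with the monotonicity of the operator, to conclude $u+C\le w$ there. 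Without some such two-region (or equivalent) argument, the assertion $w-u\in{\rm int}\,C_0^1(\overline{\Omega})_+$ is unsupported, and the rest of your proof cannot start.
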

\begin{proof}
	Noting that $u \in (0,w]$ and 
$\frac{\partial u}{\partial \nu}, \frac{\partial w}{\partial \nu}<0$ on 
$\partial \Omega$, 
we will prove that 
\begin{equation}\label{eq:minc:1}
u<w ~{\rm in}~ \Omega
 \quad {\rm and}\quad 
\frac{\partial u}{\partial \nu}>
\frac{\partial w}{\partial \nu} ~{\rm on}~ \partial \Omega. 
\end{equation} 
This fact directly implies the desired results. 
Indeed, if \eqref{eq:minc:1} holds, then $w-u \in {\rm int}\,C_0^1(\overline{\Omega})_+$ and hence, taking a sufficiently small $\kappa>0$, we get
$u+v\in [0,w]$ for any $v \in C_0^1(\overline{\Omega})$ satisfying  $\|v\|_{C_0^1(\overline{\Omega})}<\kappa$, 
whence 
$$
\E(u)=\widetilde{E}_{\alpha,\beta}(u)
=
\E^{[0,w]}(u)
=
\inf_{\W} \E^{[0,w]} 
\leq
\E^{[0,w]}(u+v)
=
\widetilde{E}_{\alpha,\beta}(u+v)
=
\E(u+v).
$$
	To establish \eqref{eq:minc:1}, we first show that $u<w$ in a neighbourhood of $\partial \Omega$, and then we derive that $u<w$ in the remaining part of $\Omega$. The details are as follows.

	For a sufficiently small $\delta>0$, we define
	$$
	\Omega_\delta 
	=
	\left\{
	x \in \Omega:~ \text{dist}(x,\partial \Omega) < \delta 
	\right\}.
	$$
	Since $u, w \in {\rm int}\,C_0^1(\overline{\Omega})_+$, one can find $\varepsilon, \delta > 0$ such that $|\nabla \left((1-s) u + s w \right)| > \varepsilon$ in $\overline{\Omega}_\delta$ for all $s \in [0,1]$.
	Indeed, suppose, by contradiction, that for any $n \in \mathbb{N}$ there exist $x_n \in \overline{\Omega}_{1/n}$ and $s_n \in [0,1]$ such that
	$|\nabla \left((1-s_n) u(x_n) + s_n w(x_n)\right)| \leq \frac{1}{n}$. 
	Passing to appropriate subsequences, we get
	$x_n \to x_0 \in \partial \Omega$, $s_n \to s_0 \in [0,1]$, and $|\nabla \left((1-s_0) u(x_0) + s_0 w(x_0)\right)| = 0$. 
	However, this contradicts the fact that 
	$\frac{\partial u}{\partial\nu}(x_0)$, $\frac{\partial w}{\partial\nu}(x_0) < 0$.

Let us denote, for short, 
$$
\mathcal{A}(\mathbf{a}) := |\mathbf{a}|^{p-2}\mathbf{a} 
+|\mathbf{a}|^{q-2}\mathbf{a} 
\quad {\rm for}\ \mathbf{a} \in \mathbb{R}^N, 
$$
and define the linealization $N\times N$-matrix $A(\mathbf{a})$ as
	\begin{align*}
	A(\mathbf{a})
	= |\mathbf{a}|^{p-2}
	\left[
	I + (p-2) \frac{\mathbf{a}
		\otimes
		\mathbf{a}
	}{|\mathbf{a}|^2}
	\right]
	+
	|\mathbf{a}|^{q-2}
	\left[
	I + (q-2) \frac{\mathbf{a}
		\otimes
		\mathbf{a}
	}{|\mathbf{a}|^2}
	\right]
	\quad {\rm for}\ \mathbf{a} \in \mathbb{R}^N \setminus \{0\}, 
	\end{align*}
	where $I$ is the identity matrix and  $\otimes$ denotes the Kronecker product, see, e.g., \cite[Appendix A.2]{PTT}.
Consider now $v := w-u$. 
	Clearly, $v \geq 0$ in $\Omega$. 
	Recalling that $w$ is a strict supersolution of \eqref{eq:D}, we subtract \eqref{eq:D:weak} from \eqref{eq:minc:0} and deduce, according to the mean value theorem, that $v$ satisfies 
	\begin{align}
	-\text{div}\left(\left[\int_0^1 A\left(\nabla \left((1-s) u + s w\right)\right) ds \right] \nabla v\right) 
& =-\text{div}(\mathcal{A}(\nabla w)-\mathcal{A}(\nabla u)) 
\nonumber \\ 
&> f_{\alpha,\beta}(w) - f_{\alpha,\beta}(u) \ge 0
	\quad \text{in}~ \Omega_\delta, \label{eq:v}
	\end{align}
	in the weak sense, 
	where the last inequality follows from the fact that $\alpha$ and $\beta$ are nonnegative and $w \geq u > 0$ in $\Omega$. 
	Applying the estimates \cite[(A.10)]{PTT} to the matrix $A$, we get
	\begin{align}
	\notag
	\big(
	\min\{1,p-1\} |\mathbf{a}|^{p-2}
	&+
	\min\{1,q-1\} |\mathbf{a}|^{q-2}
	\big)
	|\xi|^2	
	\leq 	
	\left<A(\mathbf{a}) \xi, \xi \right>_{\mathbb{R}^N}
	\\\label{eq:bounds2}
	&\leq
	\big(
	\max\{1,p-1\} |\mathbf{a}|^{p-2}
	+
	\max\{1,q-1\} |\mathbf{a}|^{q-2}
	\big)
	|\xi|^2
	\end{align}
	for any $\xi \in \mathbb{R}^N$ and $\mathbf{a} \in \mathbb{R}^N \setminus \{0\}$.
	Here, for clarity, we denote by $\left<\cdot,\cdot\right>_{\mathbb{R}^N}$ the usual scalar product in $\mathbb{R}^N$.
	Recalling that $|\nabla \left((1-s) u + s w\right)| > \varepsilon$ in $\overline{\Omega}_\delta$ for all $s \in [0,1]$, we employ the inequalities \cite[(A.4) and (A.6)]{PTT} to see that  for any $r>1$ there exist $C_1,C_2>0$ such that
	\begin{align}
	\notag
	C_1 \left(\max_{s \in [0,1]} |\nabla((1-s) u + s w)|
	\right)^{r-2} 
	&\leq 
	\int_0^1 |\nabla((1-s) u + s w)|^{r-2} \, ds
	\\\label{eq:bounds}
	&\leq 
	C_2 \left(\max_{s \in [0,1]} |\nabla((1-s) u + s w)|
	\right)^{r-2}
	\quad \text{in}~ \overline{\Omega}_\delta.
	\end{align}
	Thus, taking $\mathbf{a} = \nabla((1-s) u + s w)$ in \eqref{eq:bounds2} and using \eqref{eq:bounds} with $r=p$ and $r=q$, we conclude that there exist $C_3, C_4 > 0$ satisfying
	\begin{align*}
	C_3 |\xi|^2 \leq \left<\left[\int_0^1 A\left(\nabla \left((1-s) u + s w\right)\right) ds \right] \xi, \xi \right>_{\mathbb{R}^N}
	&\leq 
	C_4 |\xi|^2
	\quad 
	\text{in}~ \overline{\Omega}_\delta,~ 
	\text{for any}~ \xi \in \mathbb{R}^N.
	\end{align*}
	That is, the differential operator in \eqref{eq:v} is uniformly elliptic in $\overline{\Omega}_\delta$.	
	Therefore, in view of the strict inequality in \eqref{eq:v}, the strong maximum principle yields $v > 0$ in $\Omega_\delta$ and $\frac{\partial v}{\partial \nu}<0$ on $\partial \Omega$. Consequently, $u < w$ in $\Omega_\delta$ and $\frac{\partial u}{\partial \nu}>
	\frac{\partial w}{\partial \nu}$ on $\partial \Omega$.
	
	Let us now fix some $\delta' \in (0, \delta)$ and a sufficiently small $C>0$ such that $u+C \leq w$ on $\partial \Omega_{\delta'} \cap \Omega$. 
	Denoting $z = u+C$, we see that 
	\begin{equation}\label{eq:z}
	\intO |\nabla z|^{p-2}\nabla z\nabla\varphi \,dx 
	+ 
	\intO |\nabla z|^{q-2}\nabla z\nabla\varphi \,dx 
	= 
	\intO f_{\alpha,\beta}(u) \varphi\,dx
	\quad \text{for any}~ \varphi \in \W.
	\end{equation}
	Therefore, subtracting \eqref{eq:minc:0} from \eqref{eq:z} and taking $\varphi = \max\{z-w,0\}$ in $\Omega \setminus \Omega_{\delta'}$ and $\varphi=0$ in $\Omega_{\delta'}$, we derive that 
	\begin{align*}
0\le 	&\int_{\{z>w\} \cap (\Omega \setminus \Omega_{\delta'})} \left(|\nabla z|^{p-2}\nabla z - |\nabla w|^{p-2}\nabla w \right) (\nabla z - \nabla w) \,dx 
	\\
	&+ 
	\int_{\{z>w\} \cap (\Omega \setminus \Omega_{\delta'})} \left(|\nabla z|^{q-2}\nabla z - |\nabla w|^{q-2}\nabla w \right) (\nabla z - \nabla w) \,dx 
\\
& 	\le \int_{\{z>w\} \cap (\Omega \setminus \Omega_{\delta'})} 
\left(f_{\alpha,\beta}(u) - f_{\alpha,\beta}(w)\right)(z-w)\,dx
	\le 0,
	\end{align*}
	which implies that $\{z>w\} = \emptyset$ in $\Omega \setminus \Omega_{\delta'}$. 
	Thus, $u+C \leq w$ and, consequently, $u < w$ in $\Omega \setminus \Omega_{\delta'}$. Recalling that $u < w$ in $\Omega_{\delta}$, we conclude that $u \in (0,w)$ in $\Omega$. 
	Thus, \eqref{eq:minc:1} is satisfied, which completes the proof.
\end{proof}

\subsection{Proof of Theorem \ref{thm:2multiple}}

Fix any $\alpha\in [\lambda_1(p),\alpha_*)$ and 
$\beta \in (\beta_*(\alpha), \beta_{ps}(\alpha))$. 
Choosing an arbitrary $\beta^\prime\in (\beta,\beta_{ps}(\alpha)]$, we denote by $w \in {\rm int}\,C_0^1(\overline{\Omega})_+$ a positive solution of {\renewcommand{\betaM}{\beta^\prime}\eqref{eq:D}}, see Theorem \ref{prop:betaps} in the case $\alpha>\lambda_1(p)$ and Theorem \ref{thm:summary} in the case $\alpha=\lambda_1(p)$ for the existence result. 
Clearly, $w$ is a strict supersolution of \eqref{eq:D}. 
Hence, thanks to Lemma \ref{lem:ss-method}, 
we can find a global minimum point $u_1 \in {\rm int}\,C_0^1(\overline{\Omega})_+$ of $E_{\alpha,\beta}^{[0,w]}$ 
such that $\E(u_1)=E_{\alpha,\beta}^{[0,w]}(u_1)<0$, and $u_1$ is a positive solution of \eqref{eq:D}. 
Moreover, according to Lemma \ref{lem:minc}, 
$u_1$ is a local minimum point of $\Ep$ in $C^1_0(\overline{\Omega})$-topology. 
Therefore, applying Theorem \ref{thm:minimizer} with $f(x,t)=\alpha t_+^{p-1}+\beta t_+^{q-1}$, we see that 
$u_1$ is a local minimum point of $\Ep$ in $\W$. 

On the other hand, it was shown in 
\cite[Theorem 2.5]{BobkovTanaka2017}
that $\inf_{u\in\N} \E(u)=-\infty$ provided $\beta>\beta_*(\alpha)$. 
Consequently, \eqref{eq:path-4} holds, whence Theorem \ref{thm:MPsol} yields the existence of the second positive solution $u_2$ of \eqref{eq:D} which satisfies \eqref{eq:MP-2}.
\qed

\subsection{Proof of Theorem \ref{thm:3multiple}}
Fix any $\beta \in (\beta_*, \beta_{ps}(\lambda_1(p))]$ and denote by $w \in {\rm int}\,C_0^1(\overline{\Omega})_+$ a positive solution of {\renewcommand\alphaM{\lambda_1(p)}\eqref{eq:D}}, see Theorem \ref{thm:summary} for the existence of $w$.
Evidently, $w$ is a strict supersolution of \eqref{eq:D} for any $\alpha \in [0, \lambda_1(p))$.
Therefore, arguing as in the proof of Theorem \ref{thm:2multiple} above, we can find a local minimum point $u_1 = u_1(\alpha) \in {\rm int}\,C_0^1(\overline{\Omega})_+$ of $\Ep$ in $\W$ such that
\begin{equation}\label{eq:3multi-0}
u_1(\alpha) \in (0,w)
\quad \text{and} \quad
\Ep(u_1(\alpha))=\E(u_1(\alpha)) < 0
\quad \text{for any}~ \alpha \in [0, \lambda_1(p)).
\end{equation}
Thus, $u_1(\alpha)$ is the first positive solution of \eqref{eq:D}. 
Moreover, in view of the uniform $L^\infty$-bound of $u_1(\alpha)$ in \eqref{eq:3multi-0}, we get
\begin{equation}\label{eq:3multi-00}
\inf\left\{\E(u_1(\alpha)):~
\alpha \in [0, \lambda_1(p))\right\} 
>-\infty. 
\end{equation}

Let $u_2 = u_2(\alpha) \in {\rm int}\,C_0^1(\overline{\Omega})_+$ be a global minimum point of $\E$ for $\alpha < \lambda_1(p)$ obtained in \cite[Proposition 1]{BobkovTanaka2017}. 
It is proved in \cite[Proposition 2 (i)]{BobkovTanaka2017} that 
\begin{equation}\label{eq:3multi-1}
\E(u_2(\alpha)) \to -\infty, 
\quad 
\|u_2(\alpha)\|_p \to +\infty,
\quad \text{and} \quad 
\frac{u_2(\alpha)}{\|u_2(\alpha)\|_p}
\to \frac{\varphi_p}{\|\varphi_p\|_p}
~\text{in}~ \W
\end{equation}
as $\alpha \nearrow \lambda_1(p)$. 
Comparing \eqref{eq:3multi-00} and \eqref{eq:3multi-1}, we derive the existence of $\alpha_*(\beta) \in [0, \lambda_1(p))$ such that
\begin{equation}\label{eq:3multi-2} 
\E(u_2(\alpha)) < \E(u_1(\alpha))
\quad \text{for any}~  
\alpha\in 
(\alpha_*(\beta),\lambda_1(p)).
\end{equation} 
Hence, $u_2(\alpha) \not= u_1(\alpha)$ whenever $\alpha\in (\alpha_*(\beta),\lambda_1(p))$. 
Moreover, we note that, in fact, $\alpha_*(\beta) \in (0, \lambda_1(p))$ due to the uniqueness result in Proposition \ref{prop:uniq}.
On the other hand, in view of \eqref{eq:3multi-2}, Theorem \ref{thm:MPsol} provides us with the existence of 
the third positive solution $u_3(\alpha)$ of \eqref{eq:D} for any $\alpha\in 
(\alpha_*(\beta),\lambda_1(p))$, and $u_3(\alpha)$ is different from $u_1(\alpha)$ and $u_2(\alpha)$. 
\qed

\appendix 
\section{\texorpdfstring{$\W$}{W01p} versus \texorpdfstring{$C_0^1$}{C01} local minimizers}\label{appendix:WvcC}

Let $f: \Omega\times\mathbb{R} \to \mathbb{R}$ be any Carath\'eodory function and let $F(x,u) = \int_0^u f(x,v) \, dv$ be the primitive of $f$.
Along this section, we assume that $f$ satisfies the following subcritical growth condition: 
\begin{enumerate}[label={(G)}]
	\item\label{G} There exist $C>0$ and $r \in [1,p^*)$ such that 
	$$
	|f(x,t)| \le C(1+|t|^{r-1}) 
	\quad \text{for every}~
	t\in\mathbb{R}
	~\text{and a.e.}~
	x\in\Omega, 
	$$
	where $p^*=\frac{pN}{N-p}$ if $N>p$, and $p^*=+\infty$ if $N\le p$.
\end{enumerate}

It is well known that the functional  
\begin{equation*}
I(u)
=
\frac{1}{p}\intO |\nabla u|^p\,dx
+
\frac{1}{q}\intO |\nabla u|^q\,dx -
\intO F(x,u)\,dx,
\quad u \in \W,
\end{equation*}
is weakly lower semicontinuous and of class $C^1$ under the assumption \ref{G}.

The following result can be obtained in much the same way as \cite[Theorem 1.2]{AMA} or \cite[Theorem 23]{MMT}, see also \cite{KM} for a generalization.
For the convenience of the reader we sketch its proof based on \cite[Theorem 23]{MMT}. 
\begin{theorem}\label{thm:minimizer} 
Let $u_0\in \W$ be a local minimum point of $I$ in $C^1_0(\overline{\Omega})$-topology, namely, there exists $\varepsilon>0$ such that
\begin{equation}\label{eq:prop-4-4}
I(u_0)\le I(u_0+h) 
\quad \text{for any}~ 
h\in C^1_0(\overline{\Omega})
~\text{satisfying}~ 
\|h\|_{C^1_0(\overline{\Omega})} < \varepsilon.
\end{equation}
Then $u_0$ is also a local minimum point of $I$ in $\W$-topology. 
\end{theorem}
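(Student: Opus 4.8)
The plan is to argue by contradiction, following the scheme originating with Brezis--Nirenberg in the semilinear case and adapted to quasilinear operators in \cite{AMA,MMT}. First I would observe that, letting $h \to 0$ in $C^1_0(\overline{\Omega})$ in \eqref{eq:prop-4-4}, local minimality forces $\langle I'(u_0), h\rangle \ge 0$ for all $h$, hence $I'(u_0) = 0$; thus $u_0$ is a weak solution of $-\Delta_p u_0 - \Delta_q u_0 = f(x, u_0)$, and the growth condition \ref{G} together with the Moser iteration and the Lieberman-type boundary regularity (as in Remark \ref{rem:positive}) give $u_0 \in C^{1,\gamma}_0(\overline{\Omega})$. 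In particular $\nabla u_0$ is bounded, a fact needed below.

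Suppose, contrary to the claim, that $u_0$ is not a local minimum of $I$ in the $\W$-topology. Then for each small $\varepsilon > 0$ the closed ball $\overline{B}_\varepsilon(u_0) = \{u \in \W : \|\nabla(u - u_0)\|_p \le \varepsilon\}$ contains a point with $I$-value strictly below $I(u_0)$. Since $\overline{B}_\varepsilon(u_0)$ is bounded, closed and convex, hence weakly compact in the reflexive space $\W$, and $I$ is weakly lower semicontinuous, the infimum of $I$ over this ball is attained at some $u_\varepsilon$ with $I(u_\varepsilon) < I(u_0)$, so $u_\varepsilon \ne u_0$; moreover $\|\nabla(u_\varepsilon - u_0)\|_p \le \varepsilon$ forces $u_\varepsilon \to u_0$ in $\W$ as $\varepsilon \to 0$. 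Writing the Euler--Lagrange equation for this constrained minimization, I obtain a multiplier $\mu_\varepsilon \ge 0$ such that $u_\varepsilon$ solves, in the weak sense,
$$
-\mathrm{div}\!\left(|\nabla u_\varepsilon|^{p-2}\nabla u_\varepsilon + \mu_\varepsilon\,|\nabla(u_\varepsilon - u_0)|^{p-2}\nabla(u_\varepsilon - u_0) + |\nabla u_\varepsilon|^{q-2}\nabla u_\varepsilon\right) = f(x, u_\varepsilon).
$$

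The heart of the argument is to upgrade the convergence $u_\varepsilon \to u_0$ from $\W$ to $C^1_0(\overline{\Omega})$. First I would run the Moser iteration on the displayed equation, using \ref{G} and the $\W$-boundedness of $\{u_\varepsilon\}$, to obtain a uniform bound $\|u_\varepsilon\|_\infty \le M$. Then, observing that the perturbed operator above retains the $(p,q)$-structure required by Lieberman's regularity theorem---its principal part is uniformly elliptic with growth of orders $p-1$ and $q-1$, with constants controlled by $\|\nabla u_0\|_\infty$ once $\mu_\varepsilon$ is shown to stay bounded---I would deduce a uniform estimate $\|u_\varepsilon\|_{C^{1,\gamma}_0(\overline{\Omega})} \le M'$ for some $\gamma \in (0,1)$. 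By the compact embedding of $C^{1,\gamma}_0(\overline{\Omega})$ into $C^1_0(\overline{\Omega})$, the family $\{u_\varepsilon\}$ is precompact in $C^1_0(\overline{\Omega})$, and since $u_\varepsilon \to u_0$ already in $\W$, every $C^1$-limit point equals $u_0$; hence $u_\varepsilon \to u_0$ in $C^1_0(\overline{\Omega})$.

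It then follows that for all sufficiently small $\varepsilon$ we have $\|u_\varepsilon - u_0\|_{C^1_0(\overline{\Omega})} < \varepsilon_0$, with $\varepsilon_0$ the radius from the $C^1$-local minimality \eqref{eq:prop-4-4}, while at the same time $I(u_\varepsilon) < I(u_0)$---a contradiction. I expect the main obstacle to be the uniform $C^{1,\gamma}$ estimate: the Lagrange multiplier introduces the extra term $\mu_\varepsilon |\nabla(u_\varepsilon - u_0)|^{p-2}\nabla(u_\varepsilon - u_0)$, so one must control $\mu_\varepsilon$ (for instance, by testing the equation against $u_\varepsilon - u_0$ and exploiting $I(u_\varepsilon) < I(u_0)$ to show $\mu_\varepsilon$ remains bounded, or even tends to $0$) and verify that the resulting frozen-coefficient operator satisfies Lieberman's hypotheses with constants independent of $\varepsilon$; the smoothness $u_0 \in C^{1,\gamma}_0(\overline{\Omega})$ established at the outset is precisely what allows the $u_0$-term to be absorbed into admissible data.
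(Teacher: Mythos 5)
Your overall scheme (contradiction, constrained minimization on small balls around $u_0$, uniform $C^{1,\theta}$ estimates, compact embedding into $C^1_0(\overline{\Omega})$) is the right skeleton, but you constrain on the \emph{gradient} ball $\{u:\|\nabla(u-u_0)\|_p\le\varepsilon\}$, and this is where the proposal has a genuine gap. With that constraint the Lagrange multiplier enters the principal part of the operator, producing the extra divergence term $-\mu_\varepsilon\,\mathrm{div}\bigl(|\nabla(u_\varepsilon-u_0)|^{p-2}\nabla(u_\varepsilon-u_0)\bigr)$, and the whole argument hinges on uniform (in $\varepsilon$) structure conditions for Lieberman's theorem, hence on a uniform bound for $\mu_\varepsilon$. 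You correctly flag this as the main obstacle, but the fix you suggest does not close it: testing the Euler--Lagrange equation against $u_\varepsilon-u_0$ gives (when the constraint is active) $\mu_\varepsilon\|\nabla(u_\varepsilon-u_0)\|_p^p=-\langle I'(u_\varepsilon),u_\varepsilon-u_0\rangle=O(\varepsilon)$, i.e.\ only $\mu_\varepsilon=O(\varepsilon^{1-p})$, which blows up as $\varepsilon\searrow0$ for $p>1$. To salvage this route one needs the additional dichotomy used in \cite{AMA}: if $\mu_\varepsilon\to+\infty$ along a subsequence, divide the equation by $\mu_\varepsilon$ and apply the regularity theory to the normalized operator $\mu_\varepsilon^{-1}(|\xi|^{p-2}\xi+|\xi|^{q-2}\xi)+|\xi-\nabla u_0(x)|^{p-2}(\xi-\nabla u_0(x))$, whose structure constants are uniform and whose right-hand side $\mu_\varepsilon^{-1}f(x,u_\varepsilon)$ is uniformly bounded; without this (or an equivalent device) the uniform $C^{1,\theta}$ estimate is not justified and the proof is incomplete.

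The paper avoids the issue entirely by a different choice of constraint, following \cite[Theorem 23]{MMT}: it minimizes over the $L^r$-ball $\{h\in\W:\|h\|_r\le\varepsilon\}$, with $r$ the exponent from \ref{G}. This ball is still weakly closed in $\W$ (by the compact embedding $\W\hookrightarrow L^r$) and $I$ is coercive on it thanks to \ref{G}, so the infimum is attained; crucially, the multiplier now contributes only the zeroth-order term $\lambda_\varepsilon|h_\varepsilon|^{r-2}h_\varepsilon$ on the right-hand side, the principal part $-\Delta_p-\Delta_q$ is untouched, and the only thing to control is the uniform $L^\infty$-bound of $\lambda_\varepsilon|h_\varepsilon|^{r-1}$, which is supplied by \cite{MMT}. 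If you intend to keep the gradient-ball constraint, you must add the multiplier dichotomy; otherwise, switching to the $L^r$-ball is the cleaner repair.
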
 
\begin{proof}
Since $\langle I^\prime(u_0), h\rangle =0$ for every 
$h \in C^1_0(\overline{\Omega})$ and since $C^1_0(\overline{\Omega})$ is dense in $\W$, we deduce that $u_0$ is a critical point of $I$. That is, 
\begin{equation}\label{prop-4-1}
-\Delta_p u_0 - \Delta_q u_0 
=f(x,u_0) \quad \text{in}~ \Omega, 
\end{equation}
in the weak sense.
Moreover, one can show that $u_0 \in  C_0^{1,\nu}(\overline{\Omega})$ for some $\nu \in (0,1)$, cf.\ Remark \ref{rem:positive} or \cite[Section 2.4]{marcomasconi}. 

Suppose, by contradiction, that $u_0$ is not a local minimum point of $I$ in $\W$-topology. 
Then for any sufficiently small $\varepsilon>0$ we have
\begin{equation*}\label{prop-4-2} 
m_\varepsilon
:=
\inf
\left\{
I(u_0+h):~ h \in \widetilde{B}_\varepsilon(0)
\right\} 
<I(u_0), 
\end{equation*}
where $\widetilde{B}_\varepsilon(0) = \{v\in\W: \|v\|_r \le \varepsilon\}$ and $r\in [1,p^*)$ is given in the assumption \ref{G}. 
Let $\{h_n\}$ be a minimizing sequence for $m_\varepsilon$ with a fixed $\varepsilon>0$. Thanks to \ref{G}, $\{h_n\}$ is bounded in $\W$.
Therefore, $m_\varepsilon$ is attained by some $h_\varepsilon \in \widetilde{B}_\varepsilon(0)$, since 
$I$ is weakly lower semicontinuous on $\W$ and $\widetilde{B}_\varepsilon(0)$ is weakly closed in $\W$. 
Then, due to the Lagrange multipliers rule, there exists 
$\lambda_\varepsilon \le 0$ such that 
\begin{equation}\label{prop-4-3} 
-\Delta_p (u_0+h_\varepsilon) -\Delta_q (u_0+h_\varepsilon) =f(x,u_0+h_\varepsilon)
+\lambda_\varepsilon\,|h_\varepsilon|^{r-2}h_\varepsilon \quad 
{\rm in}\ \Omega. 
\end{equation}
Denoting now
\begin{align*}
\widetilde{A}(x,y) 
=
|\nabla u_0(x)+y|^{p-2}(\nabla u_0(x)+y)
&+
|\nabla u_0(x)+y|^{q-2}(\nabla u_0(x)+y)
\\ 
&-|\nabla u_0(x)|^{p-2}\nabla u_0(x)
-
|\nabla u_0(x)|^{q-2}\nabla u_0(x),
\end{align*}
we subtract \eqref{prop-4-1} from \eqref{prop-4-3} and get
\begin{equation*}\label{prop-4-5} 
-{\rm div}\, \widetilde{A}(x,\nabla h_\varepsilon)=f(x,u_0+h_\varepsilon)
-f(x,u_0) +\lambda_\varepsilon\,|h_\varepsilon|^{r-2}h_\varepsilon
\quad \text{in}~ \Omega.
\end{equation*} 
Recalling that $\lambda_\varepsilon \leq 0$ and using the Moser iteration method 
(see, e.g., \cite[Theorem C]{MMT}),  
we can find $M_1>0$ independent of $\varepsilon$ such that 
$\|h_\varepsilon\|_\infty \le M_1$ for every $\varepsilon>0$. 
Then, applying the regularity result of \cite{L} to the solution $u_0+h_\varepsilon$ of \eqref{prop-4-3}, we obtain that  
$u_0+h_\varepsilon\in C_0^1(\overline{\Omega})$, and so 
$h_\varepsilon\in C_0^1(\overline{\Omega})$ for every $\varepsilon>0$. 

Finally, it can be shown as in \cite[Theorem 23]{MMT} that there exists $d_0>0$ such that
$$
|\lambda_\varepsilon |h_\varepsilon(x)|^{r-2}h_\varepsilon(x)| \le d_0
\quad \text{for every}~ x \in \Omega
~\text{and}~ \varepsilon>0.
$$ 
This implies that $f(x,u)
+\lambda_\varepsilon|h_\varepsilon(x)|^{r-2}h_\varepsilon(x)$ is bounded on $\Omega\times[-M_1-\|u_0\|_\infty,M_1+\|u_0\|_\infty]$ 
uniformly in $\varepsilon>0$. 
Thus, applying again the regularity result of \cite{L} to the solution 
$u_0+h_\varepsilon$ of \eqref{prop-4-3}, we deduce the existence of $\theta\in(0,1)$ and $M_2>0$, both independent of $\varepsilon$, such that $u_0+h_\varepsilon \in C_0^{1,\theta}(\overline{\Omega})$ and 
$\|u_0+h_\varepsilon\|_{C_0^{1,\theta}(\overline{\Omega})} \le M_2$ 
for every $\varepsilon>0$. Since $C_0^{1,\theta}(\overline{\Omega})$ 
is embedded compactly into $C_0^1(\overline{\Omega})$, we infer that 
$u_0+h_\varepsilon \to u_0$ as $\varepsilon \searrow 0$ 
in $C_0^1(\overline{\Omega})$ 
by noting that $h_\varepsilon\to 0$ in $L^r(\Omega)$ 
as $\varepsilon \searrow 0$. 
Consequently, we get the following contradiction between \eqref{eq:prop-4-4} and \eqref{prop-4-1}:
\[
I(u_0+h_\varepsilon)
=
m_\varepsilon <I(u_0) 
\le 
I(u_0+h_\varepsilon)
\quad \text{for all sufficiently small}~ \varepsilon>0.
\qedhere
\]
\end{proof}

\section*{Acknowledgements}
V.~Bobkov was supported in the framework of executing the development program of Scientific Educational Mathematical Center of Privolzhsky Federal Area, additional agreement no.~075-02-2020-1421/1 to agreement no.~075-02-2020-1421.
M.~Tanaka was supported by JSPS KAKENHI Grant Number JP 19K03591.

\addcontentsline{toc}{section}{\refname}
\small

\end{document}